\newtheorem{theorem}{Theorem}[section]
\newtheorem{corollary}[theorem]{Corollary}
\newtheorem{definition}[theorem]{Definition}
\newtheorem{lemma}[theorem]{Lemma}
\newtheorem{question}[theorem]{Question}
\newtheorem{fact}[theorem]{Fact}
\newtheorem{remark}[theorem]{Remark}
\begin{document}
\title[]{Filtration Games and Potentially Projective Modules}

\author{Sean Cox}
\email{scox9@vcu.edu}
\address{
Department of Mathematics and Applied Mathematics \\
Virginia Commonwealth University \\
1015 Floyd Avenue \\
Richmond, Virginia 23284, USA 
}

\thanks{The author gratefully acknowledges support from Simons Foundation grant 318467, and helpful comments of the anonymous referee.}

\subjclass[2010]{03E35,03E57,03E75, 16D40  	    
}

\begin{abstract}
The notion of a \textbf{$\boldsymbol{\mathcal{C}}$-filtered} object, where $\mathcal{C}$ is some (typically small) collection of objects in a Grothendieck category, has become ubiquitous since the solution of the Flat Cover Conjecture around the year 2000.  We introduce the \textbf{$\boldsymbol{\mathcal{C}}$-Filtration Game of length $\boldsymbol{\omega_1}$} on a module, paying particular attention to the case where $\mathcal{C}$ is the collection of all countably-generated projective modules.  We prove that Martin's Maximum implies the determinacy of many $\mathcal{C}$-Filtration Games of length $\omega_1$, which in turn imply the determinacy of certain Ehrenfeucht-Fra\"iss\'{e} games of length $\omega_1$; this allows a significant strengthening of a theorem of Mekler-Shelah-Vaananen~\cite{MR1191613}.  Also, Martin's Maximum implies that if $R$ is a countable hereditary ring, the class of \textbf{$\boldsymbol{\sigma}$-closed potentially projective modules}---i.e., those modules that are projective in some $\sigma$-closed forcing extension of the universe---is closed under $<\aleph_2$-directed limits.  We also give an example of a (ZFC-definable) class of abelian groups that, under the ordinary subgroup relation, constitutes an Abstract Elementary Class (AEC) with L\"owenheim-Skolem number $\aleph_1$ in some models in set theory, but fails to be an AEC in other models of set theory.  

\end{abstract}

\maketitle


\section{Introduction}\label{sec_Intro}

The classic Gale-Stewart Theorem ensures that 2-player open games (of length $\omega$) are always determined.  A frequently used example of such a game is the Ehrenfeucht-Fra\"iss\'{e} game of length $\omega$ on a pair of first-order structures, which is an open game for \emph{Spoiler} (using the terminology of Spencer~\cite{MR1847951}).  It is natural to wonder to what extent does the Gale-Stewart Theorem extend to open games of length $\omega_1$.  In this context, ``open for Player X" means that every win by Player X is known at some countable stage of play.\footnote{More precisely: let $\mathcal{W}_X$ denote the set of maximal branches through the game tree that represent wins for Player X.  The game (of length $\omega_1$) is open for Player X if $\mathcal{W}_X$ is open with respect to the topology generated by sets of the form $B_t:=$ ``the maximal branches passing through $t$" where $t$ is a node in the game tree of countable height.}  The $\omega_1$-length analogue of Gale-Stewart Theorem fails, however, even when restricted to Ehrenfeucht-Fra\"iss\'{e} games.  For example, Mekler-Shelah-Vaananen~\cite{MR1191613} provide a ZFC example of an Ehrenfeucht-Fra\"iss\'{e} game of length $\omega_1$ that is not determined.  They did, however, show:
\begin{theorem}[Mekler et al.~\cite{MR1191613}, Theorem 10]\label{thm_MeklerEtAl}
Assume
\begin{equation}\label{eq_LSTOmega_2Laa}
\text{Every structure has an L(aa)-elementary substructure of size } < \aleph_2, \tag{*}
\end{equation}
where L(aa) is Shelah's \emph{Stationary Logic}.  Then whenever $G$ is a group and $F$ is a free abelian group, the Ehrenfeucht-Fra\"iss\'{e} game of length $\omega_1$ on the pair $(G,F)$ is determined.
\end{theorem}
\noindent The assumption \eqref{eq_LSTOmega_2Laa} is consistent relative to the consistency of a supercompact cardinal; Ben-David~\cite{MR506381} shows that it holds after the countably-closed Levy collapse to turn a supercompact cardinal into $\aleph_2$.\footnote{The proof of Theorem 10 in \cite{MR1191613} seems to actually use the stronger assumption that \eqref{eq_LSTOmega_2Laa} holds for the \emph{infinitary} version $L_{\omega_1,\omega_1}(aa)$ of Stationary Logic.  The version of \eqref{eq_LSTOmega_2Laa} for $L_{\omega_1,\omega_1}(aa)$ does hold after Levy collapsing to turn a supercompact cardinal into $\aleph_2$ (as in \cite{MR506381}), but implies the Continuum Hypothesis.  The infinitary version seems to be needed to express freeness in the language of groups, as in the sentence labeled (+) before Proposition 5 of \cite{MR1191613}.  The use of infinitary logic can be circumvented by augmenting the structure with enough set theory, as in \eqref{eq_H_theta_aa_aa} and \eqref{eq_H_theta_aa_stat} on page \pageref{eq_H_theta_aa_aa} of this paper.}  Mekler et al.\ use assumption \eqref{eq_LSTOmega_2Laa} to show that if $G$ is abelian, uncountable, and $\aleph_2$-free---i.e., if all if its subgroups of size $<\aleph_2$ are free---then \emph{Duplicator} must have a winning strategy in the Ehrenfeucht-Fra\"iss\'{e} game of length $\omega_1$ on the pair $(G,F)$ whenever $F$ is an uncountable free abelian group.  The other cases---e.g., where $F$ is countable, or when $G$ is not $\aleph_2$-free---are easily determined in ZFC alone.

We strengthen Theorem \ref{thm_MeklerEtAl} in two ways:  
\begin{enumerate}
 \item We weaken the assumption  \eqref{eq_LSTOmega_2Laa} to the Fuchino-Usuba~\cite{FuchinoUsuba} principle $\textbf{RP}_{\textbf{internal}}$ that, unlike \eqref{eq_LSTOmega_2Laa}, follows from Martin's Maximum (\cite{Cox_RP_IS}, \cite{Cox_Sakai_DRP}).  This improvement is due to a simple observation: that a certain sentence in Stationary Logic used in \cite{MR1191613} is equivalent, over $\aleph_1$-generated modules, to a different syntactic form  (see Corollary \ref{cor_Aleph_1_gen_CFilt} and subsequent discussion).

 \item\label{item_Decon}  Given a module $M$ and a collection $\mathcal{C}$ of countably-presented modules, we introduce the \textbf{$\boldsymbol{\mathcal{C}}$-filtration game of length $\boldsymbol{\omega_1}$ on $\boldsymbol{M}$} (Definition \ref{def_FiltGame}).  Player 2 attempts to prove that $M$ is ``$\mathcal{C}$-filtered" (see below), while Player 1 tries to prevent her from doing so; Player 2 wins if she lasts $\omega_1$ rounds.  These games have many interesting properties:
 \begin{enumerate}
  \item  $\text{RP}_{\text{internal}}$ implies their determinacy (if $\mathcal{C}$ has some nice quotient behavior, see Theorem \ref{thm_Cox_MM_Det}), and their determinacy---in the case where the ring is $\mathbb{Z}$ and $\mathcal{C} = \{ \mathbb{Z} \}$---implies the conclusion of Mekler et al.'s Theorem \ref{thm_MeklerEtAl} (see Corollary \ref{cor_DetImpliesDet}).
  \item Winning strategies of Players 1 and 2 are related, respectively, to Abstract Elementary Classes and ``potential" membership in the class of $\mathcal{C}$-filtered modules (see below).
\end{enumerate}

\end{enumerate}

``$M$ is $\mathcal{C}$-filtered" means that there exists a $\subseteq$-increasing and continuous sequence $\langle M_\xi \ : \ \xi < \eta \rangle$ with union $M$, such that $M_0 = \{ 0 \}$, and each quotient of the form $M_{\xi+1}/M_\xi$, is isomorphic to a  member of $\mathcal{C}$ (but $M_\xi$ for $\xi > 0$ is \emph{not} required to be a member of $\mathcal{C}$).  This is a significant weakening of the assertion that $M$ is a direct sum of modules from $\mathcal{C}$, and has become ubiquitous in approximation theory since the proof of the Flat Cover Conjecture around the year 2000 (\cite{MR1832549}, \cite{MR1798574}, \cite{MR3010854}).  The key fact (\cite{MR2822215}) is that if there exists a cardinal $\kappa$ such that every member of $\mathcal{C}$ is $<\kappa$-presented,\footnote{In which case the class of $\mathcal{C}$-filtered modules is called \textbf{$\boldsymbol{\kappa}$-deconstructible}.} then the class of all $\mathcal{C}$-filtered modules is a ``precovering" class, which allows one to replicate many of the constructions from classical homological algebra ``relative" to the class of $\mathcal{C}$-filtered modules.

\begin{theorem}\label{thm_Cox_MM_Det}
Assume the stationary reflection principle $\text{RP}_{\text{internal}}$.  Assume $R$ is a ring of size at most $\aleph_1$.  Then for any ``quotient-hereditary"\footnote{See Definition \ref{def_QuotHered} for the meaning of ``$\mathcal{C}$ is quotient-hereditary".} collection $\mathcal{C}$ of countably-presented $R$-modules, the $\mathcal{C}$-Filtration Game of length $\omega_1$ on any $R$-module is determined.  

In particular,\footnote{See Lemma \ref{lem_HeredRingQuotHered} for why this is a special instance of the first part of the theorem.} if $R$ is a hereditary ring of size at most $\aleph_1$ and either $\mathcal{C}=\{ R \}$ or $\mathcal{C}=$ the collection of countably-generated projective modules,\footnote{For projective modules, countably-generated and countably-presented are equivalent (Section \ref{sec_Prelims}).} then $\mathcal{C}$-Filtration Games of length $\omega_1$ are always determined.  
\end{theorem}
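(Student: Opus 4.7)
\emph{Plan.} I would prove the contrapositive: assuming Player 2 has no winning strategy from the empty position, I exhibit one for Player 1. Call a position $p$ in the game tree \emph{doomed} if Player 2 has no winning strategy extending $p$. By assumption the empty position is doomed, and a tree analysis (with limit stages being the subtle point, possibly requiring a preliminary use of reflection) yields a Player 1 quasi-strategy $\Sigma$ along which every reached position remains doomed.

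The central task is to show that $\Sigma$ in fact defeats every Player 2 response in some countable number of rounds. Suppose for contradiction that some Player 2 play $\pi$ against $\Sigma$ survives $\omega_1$ rounds. Fix a large regular $\theta$ and let $S$ be the collection of countable $N \prec H_\theta$ containing $R, M, \mathcal{C}, \Sigma, \pi$ and such that the initial segment of $\pi$ lying in $N$ ends with Player 2's submodule equal to $M \cap N$. A standard closure-point argument makes $S$ stationary in $[H_\theta]^{\aleph_0}$. Applying $\text{RP}_{\text{internal}}$ produces $W \in [H_\theta]^{\aleph_1}$ with $R \subseteq W$ (here is where $|R| \leq \aleph_1$ is used) and a continuous increasing chain $W = \bigcup_{\alpha<\omega_1} N_\alpha$ of countable elementary submodels from $W$, with $\{\alpha : N_\alpha \in S\}$ stationary.

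This chain produces a continuous filtration $\langle M \cap N_\alpha \rangle_{\alpha<\omega_1}$ of the $\aleph_1$-sized $R$-submodule $M \cap W$. On a club of $\alpha$, the successive quotients $(M \cap N_{\alpha+1})/(M \cap N_\alpha)$ are countably presented, and invoking the quotient-hereditary property of $\mathcal{C}$ together with elementarity of the $N_\alpha$ (each of which, being in $S$, records a partial $\mathcal{C}$-filtration of $M \cap N_\alpha$) I would argue that these quotients belong to $\mathcal{C}$. Thinning to this club promotes the chain to an honest $\mathcal{C}$-filtration of $M \cap W$ that refines $\pi$. Using elementarity of the $N_\alpha$ and internal approachability of $W$, this filtration converts into a Player 2 winning strategy against $\Sigma$ from the empty position (Player 1's countable-length sequences of moves are each absorbed into some $N_\alpha$, and Player 2 answers by moving to the next block of the filtration), contradicting doomed.

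The main obstacle is precisely this last step: reconciling the fact that each $M \cap N_\alpha$ is only closed under the $(R \cap N_\alpha)$-action (since $R$ may have size $\aleph_1$, no single $N_\alpha$ contains it) with the requirement that a $\mathcal{C}$-filtration consists of $R$-submodules with successive quotients in $\mathcal{C}$. Quotient-heredity is designed precisely for this, allowing one to push $\mathcal{C}$-membership through the expanding ring-action scope as $\alpha$ increases, but a careful bookkeeping argument is needed to verify that the quotients land in $\mathcal{C}$ on a club and that the resulting Player 2 strategy handles arbitrary Player 1 moves, not just those appearing in $\pi$. A secondary difficulty is preserving doomed-ness at limit stages in the initial tree analysis, which likely also requires a local application of $\text{RP}_{\text{internal}}$.
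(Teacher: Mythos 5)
There is a genuine gap, and it sits exactly where you flag it as a ``secondary difficulty'': preserving doomedness at limit stages. For games of length $\omega_1$ the Gale--Stewart quasi-strategy argument breaks down precisely there --- a position of countable limit length can fail to be doomed even though every proper initial segment is doomed (Player 2 cannot force the game through a given limit position, so a winning strategy at the limit does not project down to the stages below it). This is not a technicality to be patched by ``a local application of $\text{RP}_{\text{internal}}$''; it is the entire content of the theorem, and indeed the Mekler--Shelah--V\"a\"an\"anen example shows that length-$\omega_1$ EF games (hence, by Theorem \ref{thm_P1WSFG_implies_SplrWS} and Lemma \ref{lem_Equiv_Various_P2WS}, Filtration Games) can be undetermined in ZFC, so no tree analysis of this kind can succeed without a substantive new input. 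Your second half also applies the reflection principle to the wrong object: a play $\pi$ surviving $\omega_1$ rounds already hands you a $\mathcal{C}$-filtration of the submodule it generates by the rules of the game, with no reflection needed; the place where $\text{RP}_{\text{internal}}$ must do work is in producing, from the mere \emph{absence} of a Player 2 winning strategy, an $\aleph_1$-generated submodule on which Player 1 can visibly win. Finally, quotient-heredity does not certify that successive quotients of an elementary chain land in $\mathcal{C}$; its job (Definition \ref{def_QuotHered}) is to intersect a $\mathcal{C}$-filtered pair with a submodule and stay $\mathcal{C}$-filtered.

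The paper's route avoids the game tree entirely. Lemma \ref{lem_equiv_WS2Filter_Potent} shows (by forcing with a $\sigma$-closed poset of partial runs, and conversely by reading a strategy off a name for a filtration) that Player 2 has a winning strategy iff $M$ is $\sigma$-closed potentially $\mathcal{C}$-filtered. Corollary \ref{cor_StatLogicChar_SigmaPotCfilter} (via Hill's Lemma, Lemma \ref{lem_HillClub}) recasts this as an ``aa--stat'' statement about $[M]^\omega$, and Theorem \ref{thm_aleph_2_compact} uses $\text{RP}_{\text{internal}}$ to reflect its \emph{failure} to an $\aleph_1$-generated submodule $\langle W \rangle$ that is not potentially $\mathcal{C}$-filtered. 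On $\langle W \rangle$, Player 1 wins simply by enumerating $W$ (a surviving run would be a $\mathcal{C}$-filtration). Quotient-heredity then enters only through Lemma \ref{lem_UpClosed}: Player 1 plays the small game inside $\langle W \rangle$ and intersects Player 2's responses in the big game with $\langle W \rangle$, quotient-heredity guaranteeing these intersections are legal moves. If you want to salvage your write-up, replace the doomed-position analysis with this chain of reductions; the contrapositive framing is the same, but the winning strategy for Player 1 is produced by reflection plus restriction, not by a Gale--Stewart tree argument.
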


If $\mathcal{C}$ is the collection of countably-generated projective modules, we often refer to the $\mathcal{C}$-filtration games as \textbf{Projective Filtration Games}, since (by Kaplansky~\cite{MR0100017}) the class of projective modules is exactly the class of $\mathcal{C}$-filtered modules.  Similarly, if $\mathcal{C}=\{ R \}$, we often refer to the $\mathcal{C}$-filtration games as \textbf{Free Filtration Games}.



Winning strategies for \emph{Duplicator} in Ehrenfeucht-Fra\"iss\'{e} games are closely related to the notion of \emph{potential isomorphisms} (i.e., isomorphisms introduced by forcing); this was first investigated in Nadel-Stavi~\cite{MR462942}.  This connection shows up in Filtration Games too; e.g., Player 2 has a winning strategy in the Projective Filtration Game of length $\omega_1$ on $M$ if and only if $M$ is \textbf{$\boldsymbol{\sigma}$-closed potentially projective}, meaning that $M$ is projective in some $\sigma$-closed forcing extension of the universe.  This is discussed in Section \ref{sec_FiltGameDef}.

The notion of $\sigma$-closed potential projectivity arises in another context too.  \v{S}aroch-Trlifaj~\cite{MR4186456} proved that if $\kappa$ is a strongly compact cardinal and $R$ is a ring of cardinality less than $\kappa$, then the direct limit of any $<\kappa$-directed system of projective $R$-modules is projective.  It is natural to ask what fragment of that result can possibly hold for, say, $\kappa = \aleph_n$ ($n \ge 1$).  We cannot hope to prove that projective modules are closed under $<\aleph_n$-directed limits for such $n$, even for $R=\mathbb{Z}$, because of well-known ZFC results about almost free groups (\cite{MR1249391}).  However, we \emph{can} consistently get a version of their theorem (for $n = 2$) if we replace ``projective" with ``$\sigma$-closed potentially projective":  
\begin{theorem}\label{thm_Cox_ClosedDL}
Assume $\text{RP}_{\text{internal}}$.  Suppose $R$ is a ring of size at most $\aleph_1$ and $\mathcal{C}$ is a collection of countably-presented $R$-modules that is quotient-hereditary.  Then the class of $\sigma$-closed potentially $\mathcal{C}$-filtered modules is closed under $<\aleph_2$-directed limits.

In particular:  if $R$ is a countable, hereditary ring, then the class of $\sigma$-closed potentially projective $R$-modules is closed under $<\aleph_2$-directed limits. 
\end{theorem}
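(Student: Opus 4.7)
The plan is to exhibit a $\sigma$-closed forcing $\mathbb{P}$ in $V$ under which $M$ becomes $\mathcal{C}$-filtered; by definition of $\sigma$-closed potential $\mathcal{C}$-filteredness, this suffices. For each $i \in I$, use the hypothesis to fix a $\sigma$-closed forcing $\mathbb{P}_i$ that forces $M_i$ to be $\mathcal{C}$-filtered. Let $\mathbb{P}$ be the full-support product of $\mathrm{Col}(\omega_1, |M|)$ with $\prod_{i \in I}\mathbb{P}_i$. Products of $\sigma$-closed forcings are $\sigma$-closed, and the collapse is $\sigma$-closed, so $\mathbb{P}$ is $\sigma$-closed.

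Working in $V^{\mathbb{P}}$: we have $|M| = \aleph_1$; each $M_i$ is still $\mathcal{C}$-filtered (its $\mathcal{C}$-filtration produced in the sub-extension $V^{\mathbb{P}_i}$ persists under the further $\sigma$-closed forcing $\mathbb{P}/\mathbb{P}_i$, since $\sigma$-closed forcing adds no new countable sets and hence preserves the continuity of the chain and the $\mathcal{C}$-membership of the countably-presented successive quotients); and $(M_i)_{i \in I}$ remains countably directed (no new countable subsets of $I$ appear). Enumerate $M$ as $\{m_\alpha : \alpha < \omega_1\}$ and inductively choose indices $j_\alpha \in I$ with $j_\alpha \ge j_\beta$ for all $\beta < \alpha$ and $m_\alpha \in M_{j_\alpha}$, using countable directedness at each stage. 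Setting $M^*_0 := 0$, $M^*_{\alpha+1} := M_{j_\alpha}$, and $M^*_\alpha := \bigcup_{\beta < \alpha}M^*_\beta$ at limits, we obtain a continuous ascending chain of submodules of $M$ with $\bigcup_{\alpha<\omega_1} M^*_\alpha = M$.

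By the quotient-heredity of $\mathcal{C}$, each successor quotient $M^*_{\alpha+1}/M^*_\alpha$ is $\mathcal{C}$-filtered: given any $\mathcal{C}$-filtration $(F_\gamma)$ of $M_{j_\alpha} = M^*_{\alpha+1}$, the image chain $\bigl((F_\gamma + M^*_\alpha)/M^*_\alpha\bigr)_\gamma$ is a continuous ascending chain in $M^*_{\alpha+1}/M^*_\alpha$ whose consecutive quotients are quotients of elements of $\mathcal{C}$, and hence in $\mathcal{C}$ by quotient-heredity. Since $(M^*_\alpha)_{\alpha<\omega_1}$ is a continuous ascending chain filtering $M$ with each successor quotient $\mathcal{C}$-filtered, the standard transitivity of $\mathcal{C}$-filtration gives that $M$ is $\mathcal{C}$-filtered in $V^{\mathbb{P}}$, as desired.

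The main obstacle is the absoluteness check in the second paragraph: each $M_i$'s $\mathcal{C}$-filtration from $V^{\mathbb{P}_i}$ must survive into the larger extension $V^{\mathbb{P}}$, which hinges on the $\sigma$-closure of the quotient forcing $\mathbb{P}/\mathbb{P}_i$ preserving countable sets, countably-presented modules, and their membership in $\mathcal{C}$. An alternative, game-theoretic route that makes more overt use of $\mathrm{RP}_{\mathrm{internal}}$ is to invoke the (presumably earlier-established) characterization that Player 2 wins the $\mathcal{C}$-Filtration Game on $M$ iff $M$ is $\sigma$-closed potentially $\mathcal{C}$-filtered, combined with Theorem~\ref{thm_Cox_MM_Det}: one would then show Player 1 cannot win on $M$ by a reflection argument, localizing a hypothetical winning strategy of Player 1 to some $M_{i^*}$ via a countably-closed elementary substructure and contradicting Player 2's winning strategy on $M_{i^*}$.
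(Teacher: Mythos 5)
Your main argument has a fatal gap, and there is a quick sanity check that shows it must: the forcing-product construction in your first two paragraphs never uses $\text{RP}_{\text{internal}}$, so if it worked it would prove the theorem in ZFC. It cannot: under $\square_{\omega_1}$ there is an $\aleph_2$-free abelian group $G$ of size $\aleph_2$ that is not $\sigma$-closed potentially free (Theorem \ref{thm_Square} together with Lemma \ref{lem_Equiv_Various_P2WS}), and $G$ is the $<\aleph_2$-directed union of its free subgroups of size $\le\aleph_1$. The concrete failure point is your claim that each successor quotient $M^*_{\alpha+1}/M^*_\alpha$ is $\mathcal{C}$-filtered. First, the transition maps $\pi_{i,j}$ of a $<\aleph_2$-directed system are not assumed injective, so the $M_i$ are not submodules of $M$ and the assignments ``$m_\alpha \in M_{j_\alpha}$'', ``$M^*_{\alpha+1}:=M_{j_\alpha}$'' do not parse; what sits inside $M$ is only a homomorphic image of $M_{j_\alpha}$. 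Second, and independently, your justification misreads Definition \ref{def_QuotHered}: quotient-heredity says that a $\mathcal{C}$-filtration of $Z'$ \emph{traces} onto any submodule $U'$ (via $U'\cap Z$), not that homomorphic images of members of $\mathcal{C}$ stay in $\mathcal{C}$ --- for $\mathcal{C}=\{\mathbb{Z}\}$ the quotient $\mathbb{Z}/n\mathbb{Z}$ is not $\mathcal{C}$-filtered. Third, even for an honest directed union of submodules, knowing each $M^*_{\alpha+1}$ is $\mathcal{C}$-filtered says nothing about $M^*_{\alpha+1}/M^*_\alpha$ for the particular $M^*_\alpha$ handed to you by the directed system; the almost-free non-free groups are exactly the examples where every such chain has stationarily many non-$\mathcal{C}$-filtered successor quotients.

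Your closing sketch is much closer to what actually works, but the localization has to go in the other direction. The paper first proves, from $\text{RP}_{\text{internal}}$, that the class $\Gamma$ of $\sigma$-closed potentially $\mathcal{C}$-filtered modules is $\aleph_2$-club-compact (Theorem \ref{thm_aleph_2_compact}, using the stationary-logic characterization of Corollary \ref{cor_StatLogicChar_SigmaPotCfilter} and the \emph{internal} part of the reflection). Then, for a $<\aleph_2$-directed system $\mathcal{D}$ with limit $M_{\mathcal{D}}\notin\Gamma$, compactness yields an elementary $W$ of size $\aleph_1$ with $W\cap M_{\mathcal{D}}\notin\Gamma$; one takes $i_W\in I$ above all of $W\cap I$ and constructs a \emph{pure embedding} $[i,x]\mapsto \pi_{i,i_W}(x)$ of $W\cap M_{\mathcal{D}}$ into $M_{i_W}\in\Gamma$, contradicting the fact that $\Gamma$ is closed under pure submodules (which is where quotient-heredity is really used). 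Note that one does not localize a strategy of Player 1 to an $M_{i^*}$ of the system --- those are not submodules of $M$ --- but rather embeds a small bad piece of the limit into one of the $M_{i}$'s.
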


Finally, we give some applications of Filtration Games to \emph{Abstract Elementary Classes (AEC)}, which were introduced by Shelah~\cite{MR1033034}.  An AEC is a (possibly class-sized) partial order satisfying some axioms that generalize certain properties possessed by the elementary submodel ordering in many logics.  Given a ring $R$, a regular cardinal $\mu$, and a collection $\mathcal{C}$ of $<\mu$-presented $R$-modules, let $\boldsymbol{\Gamma^{\textbf{Filt}\big( \mathcal{C} \big)}_{\mu, \textbf{P1}}}$ denote the class of $R$-modules $M$ such that Player 1 has a winning strategy in the $\mathcal{C}$-filtration game of length $\mu$ on $M$ (the main case of interest is $\mu = \omega_1$).  Define $M \prec_R N$ to mean there is an injective $R$-module homomorphism from $M$ to $N$.\footnote{For the purposes of the following it would not matter if we placed the additional constraints that the embedding is pure, or even elementary in the language of $R$-modules.}   First we prove a ZFC theorem:

\begin{theorem}\label{thm_AlwaysAEC}
If $\mathcal{C}$ is quotient-hereditary, then the partial order 
\[
\Big( \Gamma^{\text{Filt}(\mathcal{C})}_{\mu,\text{P1}}, \prec_R \Big)
\]
is (always) an AEC, with L\"owenheim-Skolem number at most $\text{max}\Big( |R|, 2^{<\mu}\Big)$.

In particular, if $R$ is a hereditary ring, then the class of $R$-modules such that Player 1 has a winning strategy in the Projective Filtration Game of length $\omega_1$ is an AEC. 
\end{theorem}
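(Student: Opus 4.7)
The plan is to verify the standard AEC axioms for $\bigl(\Gamma^{\text{Filt}(\mathcal{C})}_{\mu,\text{P1}}, \prec_R\bigr)$. Closure under isomorphism is immediate, since any $R$-isomorphism $M \to M'$ transports a winning strategy for Player 1 on $M$ to one on $M'$. The coherence axiom and the fact that $\prec_R$ is a partial order on isomorphism types are automatic consequences of the definition of $\prec_R$ (existence of an injective $R$-homomorphism). The substantive axioms to verify are the L\"owenheim-Skolem property and closure under $\prec_R$-chains.

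For L\"owenheim-Skolem, fix $M \in \Gamma^{\text{Filt}(\mathcal{C})}_{\mu,\text{P1}}$ with a Player 1 winning strategy $\sigma$, and any $A \subseteq M$. Set $\kappa := \max\bigl(|R|, 2^{<\mu}, |A|\bigr)$; since $\mu$ is regular, cardinal arithmetic gives $\kappa^{<\mu} = \kappa$. Choose an elementary submodel $N \prec H(\theta)$ of size $\kappa$ with $\{R, M, \sigma, A\} \subseteq N$ and ${}^{<\mu}N \subseteq N$, and set $M' := N \cap M$. Then $M'$ is an $R$-submodule of $M$ of size at most $\kappa$ containing $A$. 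I claim $\sigma$ restricts to a winning strategy on $M'$: any Player 2 play $p$ in $M'$ of length $<\mu$ lies in ${}^{<\mu}N \subseteq N$, so $\sigma(p) \in N \cap M = M'$ by elementarity; and any such play in $M'$ is also a legal play in $M$ via the inclusion $M' \hookrightarrow M$, so $\sigma$ continues to defeat Player 2. Hence $M' \in \Gamma^{\text{Filt}(\mathcal{C})}_{\mu,\text{P1}}$, giving the L\"owenheim-Skolem bound $\max(|R|, 2^{<\mu})$.

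For chain closure, let $(M_i)_{i<\delta}$ be a continuous $\prec_R$-chain in $\Gamma^{\text{Filt}(\mathcal{C})}_{\mu,\text{P1}}$ with colimit $M$; the canonical maps $M_i \hookrightarrow M$ witness that $M$ is the $\prec_R$-least upper bound. To show $M \in \Gamma^{\text{Filt}(\mathcal{C})}_{\mu,\text{P1}}$, I would argue by contradiction: assume Player 2 has a winning strategy $\tau$ on $M$ and derive a winning strategy $\tau_i$ for Player 2 on some $M_i$, contradicting $M_i \in \Gamma^{\text{Filt}(\mathcal{C})}_{\mu,\text{P1}}$. The strategy $\tau_i$ internally simulates $\tau$ on $M$: on a Player 1 challenge $x_\alpha \in M_i$, it feeds $x_\alpha$ to $\tau$, obtains $\tau$'s response $N_\alpha \subseteq M$, and outputs $N_\alpha \cap M_i$ as Player 2's move in $M_i$. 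The quotient-hereditary hypothesis on $\mathcal{C}$ is exactly what ensures that the resulting subquotients $(N_\alpha \cap M_i)/(N_{\alpha-1} \cap M_i)$, each a submodule of $N_\alpha/N_{\alpha-1} \in \mathcal{C}$, still belong to $\mathcal{C}$ and remain countably presented. Continuity at limits is automatic since intersection with $M_i$ commutes with directed unions, and $\tau_i$ survives $\mu$ rounds because $\tau$ does.

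The main obstacle I expect is pinning down how the precise formulation of ``quotient-hereditary" in Definition~\ref{def_QuotHered} delivers the restriction lemma behind chain closure: that intersecting any partial $\mathcal{C}$-filtration of $M$ with a submodule $M_i$ yields a partial $\mathcal{C}$-filtration of $M_i$ whose pieces are still countably presented and have quotients in $\mathcal{C}$. Once this restriction lemma is in hand, the arguments above verify all the AEC axioms with the stated L\"owenheim-Skolem bound, and the ``in particular" clause for hereditary rings follows by applying Lemma~\ref{lem_HeredRingQuotHered}.
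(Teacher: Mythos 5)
Your L\"owenheim--Skolem argument is essentially the paper's (Lemma \ref{lem_P1_LS_always}): a $<\mu$-closed elementary submodel containing the strategy, followed by restriction of Player 1's strategy to $X\cap M$. That part is fine.

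The chain-closure step, however, has a genuine logical gap: it proves the wrong statement. You assume Player 2 has a winning strategy on the union $M$, restrict it to some $M_i$, and contradict $M_i\in\Gamma^{\text{Filt}(\mathcal{C})}_{\mu,\text{P1}}$. The conclusion of that reductio is only that Player 2 has \emph{no} winning strategy on $M$ --- i.e.\ $M$ lies in the class $\Gamma_{\neg\text{P2}}$ --- which is strictly weaker than the required conclusion that \emph{Player 1 has} a winning strategy on $M$. These two classes coincide only when the game on $M$ is determined, and determinacy of these length-$\omega_1$ games is emphatically not a ZFC theorem: it is the content of Theorem \ref{thm_Cox_MM_Det} and requires $\text{RP}_{\text{internal}}$, while Theorem \ref{thm_AnswerBaldwin} shows that the class $\Gamma_{\neg\text{P2}}$ can consistently fail to be an AEC. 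So your argument, as stated, cannot establish the Tarski--Vaught axiom for $\Gamma^{\text{Filt}(\mathcal{C})}_{\mu,\text{P1}}$ in ZFC.

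The fix is to run the same intersection idea in the opposite direction, which is exactly the paper's Lemma \ref{lem_UpClosed}: if $M\subseteq N$ and Player 1 has a winning strategy $\tau$ on $M$, then Player 1 wins on $N$ by playing $\tau$'s challenges (which are subsets of $M$) and, after each of Player 2's responses $Z_i\subseteq N$, secretly feeding the restricted position $\langle (X_i, Z_i\cap M)\rangle$ back into $\tau$. Quotient-hereditariness is used precisely where you anticipated --- to show that intersecting Player 2's partial $\mathcal{C}$-filtration of $N$ with $M$ yields a legal partial play on $M$ --- so the algebraic core of your ``restriction lemma'' is correct; only the game-theoretic packaging must be a direct upward transfer of Player 1's strategy rather than a downward transfer of a hypothetical Player 2 strategy. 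Once the class is known to be upward closed under submodules, closure under unions of chains is immediate (the union contains some $M_i$ on which Player 1 already wins), with no appeal to determinacy.
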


Consider the special case $\mu = \omega_1$.  If $\text{RP}_{\text{internal}}$ holds and $R$ and $\mathcal{C}$ satisfy the hypothesis of Theorem \ref{thm_Cox_MM_Det}, then the determinacy ensures that $\Gamma^{\text{Filt}(\mathcal{C})}_{\omega_1, \text{P1}}$ is the same as the class of modules such that Player 2 does \textbf{not} have a winning strategy in the $\mathcal{C}$-filtration game of length $\omega_1$.  But asserting that the latter class is an AEC turns out to be independent of ZFC:
\begin{theorem}\label{thm_AnswerBaldwin}
The class of abelian groups $G$ for which Player 2 does \textbf{not} have a winning strategy in the Free Filtration game of length $\omega_1$ on $G$, ordered by the subgroup (or pure subgroup, or elementary subgroup) relation, is an AEC with L\"owenheim-Skolem number $\aleph_1$ in some models of set theory, but is not even an AEC in others.
\end{theorem}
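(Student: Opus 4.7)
The plan is to prove the two consistency assertions separately, using determinacy for the positive side and exhibiting an explicit axiom-failure for the negative side.

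For the \emph{consistency of being an AEC with L\"owenheim-Skolem number $\aleph_1$}, I would work in a model of $\text{RP}_{\text{internal}} + \text{CH}$; such a model exists because $\text{RP}_{\text{internal}}$ is compatible with CH as noted in the introduction. Apply Theorem \ref{thm_Cox_MM_Det} with $R = \mathbb{Z}$ (a countable hereditary ring) and $\mathcal{C}$ the collection of countably-presented free abelian groups: this yields that the Free Filtration Game of length $\omega_1$ on every abelian group $G$ is determined. Hence the class of $G$ on which Player 2 has \emph{no} winning strategy coincides with the class $\Gamma^{\text{Filt}(\mathcal{C})}_{\omega_1,\text{P1}}$ on which Player 1 has a winning strategy. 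By Theorem \ref{thm_AlwaysAEC}, this class under the subgroup (or pure-subgroup, or elementary-subgroup) ordering is an AEC, with L\"owenheim-Skolem number at most $\max(|R|, 2^{<\omega_1}) = 2^{\aleph_0} = \aleph_1$, where the last equality uses CH.

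For the \emph{failure of being an AEC} in some models, the strategy is to exploit the gap between ``Player 2 does not win'' and ``Player 1 wins'' that can open up when determinacy fails. I would work in a model of set theory in which the Free Filtration Game on some abelian group is \emph{not} determined---a ZFC-style example can be extracted from Mekler--Shelah--V\"a\"an\"anen~\cite{MR1191613} via the equivalence between filtration games and the appropriate EF-games sketched in Section \ref{SubSec_DB_EF}, and one can also arrange $2^{\aleph_0} > \aleph_1$ via suitable forcing. In such a model the class ``P2 does not win'' strictly contains the AEC ``P1 wins'', and the extra groups---those on which the game is undetermined---are the candidates to witness an AEC-axiom failure. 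Closure under isomorphism is immediate and coherence is trivial for the subgroup ordering, so the failure must occur in the chain-closure axiom or in L\"owenheim-Skolem; the natural target is the latter, because any winning strategy for Player 1 has size bounded by $2^{<\omega_1}$, so when $2^{\aleph_0} > \aleph_1$ the apparent L\"owenheim-Skolem number inflates past $\aleph_1$.

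Concretely, I would pick a ``sufficiently generic'' almost-free-but-not-free abelian group $G$ of size $\aleph_1$ (so that the Free Filtration Game on $G$ is undetermined in the chosen model) and a $\aleph_2$-sized extension $\widetilde{G}$ built from $\aleph_2$ copies of $G$ amalgamated in a fashion designed so that: (i) $\widetilde{G}$ itself lies in the class, because no continuous chain of countable free subgroups with free successive quotients can exhaust it; (ii) every ``small'' $\prec_R$-substructure of $\widetilde{G}$ of size $\aleph_1$ that contains a prescribed countable set is forced to be free (hence lies outside the class, since on a free group Player 2 wins trivially by playing through a free basis). Combining (i) and (ii) violates the L\"owenheim-Skolem axiom at $\aleph_1$, showing the class is not an AEC with that L\"owenheim-Skolem number; a separate counting argument, using that all groups in ``P1 wins'' have size-bounded witnessing strategies, rules out any larger L\"owenheim-Skolem number being adequate in the model.

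The main obstacle is the second half: pinning down the almost-free amalgam $\widetilde{G}$ so that the undeterminedness of the game is preserved under the amalgamation, and so that the $\aleph_1$-sized substructures are actually forced to be free rather than merely ``most likely free''. I expect the cleanest route is to use a Shelah-style black-box or $\Diamond$-like combinatorial prediction principle available in the chosen model to construct $\widetilde{G}$ whose $\aleph_1$-sized subgroups are exactly the free ones, while arguing undeterminedness of the game on $\widetilde{G}$ by reduction to the undeterminedness of the game on $G$ together with a strategy-stealing argument through the amalgamation structure.
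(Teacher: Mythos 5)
Your positive half is essentially the paper's argument and is correct: under $\text{RP}_{\text{internal}}$ determinacy collapses ``P2 does not win'' onto $\Gamma^{\text{Filt}(\mathcal{C})}_{\omega_1,\text{P1}}$, which is an AEC by Theorem \ref{thm_AlwaysAEC}. One small difference: you invoke CH to push the L\"owenheim--Skolem bound $2^{<\omega_1}$ down to $\aleph_1$, whereas the paper gets $\aleph_1$ directly from $\aleph_2$-club-compactness (Theorem \ref{thm_aleph_2_compact}): every module outside the class of $\sigma$-closed potentially filtered modules has an $\aleph_1$-sized submodule also outside it. Your route is fine since $\text{RP}_{\text{internal}}+\text{CH}$ is consistent, but it is worth knowing the CH hypothesis is removable.

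The negative half has a genuine gap, in two respects. First, the witnessing group you propose to build by hand --- an $\aleph_2$-sized, $\aleph_2$-free (all $\aleph_1$-sized subgroups free) abelian group on which the game is undetermined --- is exactly what Theorem 1.7 of Mekler--Shelah--V\"a\"an\"anen already supplies from $\square_{\omega_1}$ (the paper's Theorem \ref{thm_Square}, combined with the contrapositive of Corollary \ref{cor_DetImpliesDet} to transfer non-determinacy from the EF game to the Free Filtration Game). Your sketch of an amalgam of copies of an $\aleph_1$-sized group, controlled by a black-box or $\Diamond$-like principle, is not carried out, and you yourself flag it as the main obstacle; as written it is not a proof. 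Note also that this group example is \emph{not} a ZFC example --- it needs $\square_\kappa$ --- so your remark that a ``ZFC-style example can be extracted'' from \cite{MR1191613} conflates their ZFC non-determined EF game (on other structures) with the group-versus-free-group case. Second, and more importantly, refuting the AEC property via axiom \ref{item_LS_axiom} of Definition \ref{def_AEC} requires killing \emph{every} candidate L\"owenheim--Skolem cardinal $\mu$, not just $\aleph_1$. Your proposed ``separate counting argument'' --- bounding the size of Player 1's strategies by $2^{<\omega_1}$ --- does not do this: the class is ``P2 does not win,'' which in a non-determined world strictly contains ``P1 wins,'' and in any case a large bound on strategy size says nothing about the failure of the L\"owenheim--Skolem axiom at larger $\mu$. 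The paper handles this by assuming $\square_\kappa$ for class-many $\kappa$: given any candidate $\mu$, choose $\kappa>\mu$ with $\square_\kappa$ and take the resulting $\kappa^+$-free group $G$ of size $\kappa^+$ with undetermined game; then $G$ is in the class but every $\le\kappa$-sized subgroup is free, hence outside the class, so no $A'$ of size $\le\mu$ can witness the axiom. You need this ``arbitrarily large $\kappa$'' device (or something equivalent) to finish.
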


Section \ref{sec_Prelims} covers preliminaries.  Section \ref{sec_Hill_and_related} provides the connection between stationary sets and potentially $\mathcal{C}$-filtered modules, including a useful consequence of \emph{Hill's Lemma}.  Section \ref{sec_FiltGames} introduces Filtration Games, the Dual Basis Game, and analyzes the relationship between these games and Ehrenfeucht-Fra\"iss\'{e} games.  Section \ref{sec_FiltGames} also includes the proof of Theorem \ref{thm_AlwaysAEC}.   Section \ref{sec_MainThms} proves an ``$\aleph_2$-compactness" result (under $\text{RP}_{\text{internal}}$) that is, in turn, used to prove Theorems \ref{thm_Cox_MM_Det},  \ref{thm_Cox_ClosedDL}, and \ref{thm_AnswerBaldwin}.  Section \ref{sec_Conclusion} lists some open problems.

\section{Preliminaries}\label{sec_Prelims}

Unless otherwise indicated, our terminology agrees with Jech~\cite{MR1940513} (for the set-theoretic background) and Eklof-Mekler~\cite{MR1914985} and G\"obel-Trlifaj~\cite{MR2985554} (for the algebra background).  By ``$R$-module" we will officially mean a left $R$-module.  If $X$ is a subset of an $R$-module $M$, $\langle X \rangle^M_R$ denotes the $R$-submodule of $M$ generated by $X$; the $M$ and $R$ decorations will be omitted when it is clear from the context.  A module $M$ is \textbf{$\boldsymbol{\mu}$-generated} if $M = \langle X \rangle$ for some $X \subseteq M$ with $|X|=\mu$; equivalently, there exists a short exact sequence $0 \to K \to F \to M \to 0$ where $F$ is free of rank $\mu$.  $M$ is \textbf{$\boldsymbol{\mu}$-presented} if there exists a short exact sequence $0 \to K \to F \to M \to 0$ where $F$ is free of rank $\mu$, and $K$ is $\mu$-generated.  A module $P$ is \textbf{projective} if it is a direct summand of some free module.  Equivalently, $P$ is projective if every short exact sequence of the form 
\[
0 \to K \to M \to P \to 0
\]
 splits, in which case $M \simeq K \oplus P$.  In particular, if $P$ is projective and $\mu$-generated as witnessed by a short exact sequence
 \[
0 \to K \to F \to P \to 0 
 \]
where $F$ is free of rank $\mu$, then $F \simeq K \oplus P$, and hence $K \simeq F/P$.  So $K$ is $\mu$-generated, yielding:
\begin{fact}\label{fact_ProjCGCP}
If $P$ is projective and $\mu$-generated, then $P$ is $\mu$-presented.
\end{fact}

Occasionally we will refer to the notion of a pure embedding:  an $R$-submodule $M$ of $N$ is called a \textbf{pure submodule} if for every (finite) matrix $A$ with entries from $R$, and every vector $\vec{b}$ of elements from $M$, if $A\vec{x}=\vec{b}$ has a solution in $N$, then it has a solution in $M$.  An embedding $\rho: M \to N$ is called a pure embedding if its image is a pure submodule of $N$.

\subsection{Dual Basis characterization of projectivity}
Recall that projective modules are, by definition, direct summands of free modules.  Several classical characterizations of projectivity appear in Lam~\cite{MR1653294}.  For example, a module is projective if and only if it has a \textbf{dual basis}.  A dual basis for an $R$-module $M$ is a pair
\[
\mathcal{D} = \Big( B, \big( f_b \big)_{b \in B} \Big)
\]
such that $B \subseteq M$, each $f_b: M \to R$ is $R$-linear, and for all $x \in M$, 
\[
\text{sprt}_{\mathcal{D}}(x):= \{ b \in B \ : \ f_b(x) \ne 0 \}
\]
 is finite and 
 \[
 x = \sum_{b \in \text{sprt}(x)} f_b(x) b.
 \]
 
\noindent Existence of a dual basis for $M$ is equivalent to projectivity of $M$ (see e.g.\ Lam~\cite{MR1653294}).  The dual basis characterization of projectivity yields the following nice consequence, that projectivity is absolute for countably-generated modules:

\begin{lemma}\label{lem_Proj_Abs}
Let $\mathcal{H} \subset \mathcal{H}'$ be transitive $\text{ZFC}^-$ models.  Then for any ring $R \in \mathcal{H}$ and any $R$-module $M \in \mathcal{H}$ such that $\mathcal{H} \models$ ``$M$ is countably generated",
\[
\mathcal{H} \models \ M \text{ is projective} \iff \ \mathcal{H}' \models M \text{ is projective.}
\]
\end{lemma}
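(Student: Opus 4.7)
The plan is to derive both directions of the equivalence from the dual basis characterization of projectivity recalled just before the lemma. For the forward direction, a dual basis $\mathcal{D}=(B,(f_b)_{b\in B})\in\mathcal{H}$ remains a dual basis in $\mathcal{H}'$: transitivity makes $M$, $R$, and each evaluation $f_b(x)$ absolute, and the three defining conditions (each $f_b$ being $R$-linear, each $\text{sprt}_{\mathcal{D}}(x)$ being finite, and $x=\sum_b f_b(x)b$) are $\Delta_0$ in those absolute parameters.

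For the reverse direction, fix a countable generating set $\{x_n\}_{n<\omega}\in\mathcal{H}$ and set $F:=R^{(\omega)}\in\mathcal{H}$ with canonical surjection $\pi\colon F\to M$ sending $e_n\mapsto x_n$. In $\mathcal{H}'$, choose any dual basis $(B,(f_b)_{b\in B})$ of $M$ and restrict to $B_0:=\bigcup_n\text{sprt}(x_n)$, which is countable in $\mathcal{H}'$. Because each $f_b$ is $R$-linear and every $y\in M$ is a finite $R$-linear combination of the $x_n$, one has $\text{sprt}(y)\subseteq B_0$ for every $y\in M$, so $(B_0,(f_b)_{b\in B_0})$ is a countable dual basis for $M$ in $\mathcal{H}'$. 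Equivalently, $\pi$ splits in $\mathcal{H}'$, witnessed by a column-finite $\omega\times\omega$ matrix $A$ with entries in $R$ satisfying the absolute $\Delta_0$ conditions: finite support of each row; $\sum_k A_{n,k}x_k=x_n$ in $M$ for every $n$; and $\sum_n r_n A_{n,k}=0$ in $R$ for every finite relation $\sum_n r_n x_n=0$ in $M$.

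It then remains to produce such a matrix $A$ inside $\mathcal{H}$. The existence of $A$ is a countably-quantified existential with $\Delta_0$ body in the parameters $R,M,F,\pi,\{x_n\}\in\mathcal{H}$, and its witness is coded by a sequence of rows each lying in the absolute set $R^{(\omega)}\subseteq\mathcal{H}$. Using the $\mathcal{H}'$-witness as a certificate, at each inductive stage $n$ the finite system of constraints involving $x_0,\dots,x_n$ is known to be satisfiable in $R^{(\omega)}$; satisfiability of a finite system of $R$-linear equations is a first-order property of $R$, hence absolute between transitive $\text{ZFC}^-$-models. One then assembles $A$ row-by-row inside $\mathcal{H}$ by countable choice (available in $\text{ZFC}^-$), obtaining a splitting of $\pi$ in $\mathcal{H}$ and hence the required dual basis.

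The main obstacle is precisely this last transfer: $R^{\omega\times\omega}$ may strictly grow from $\mathcal{H}$ to $\mathcal{H}'$, so the specific matrix witnessing projectivity in $\mathcal{H}'$ need not lie in $\mathcal{H}$. The reduction to a countable dual basis (via the restriction to $B_0$) is exactly what makes the argument work: projectivity of a countably generated module becomes a countably-quantified existential over absolute parameters, which can be pushed down from $\mathcal{H}'$ into $\mathcal{H}$ by the $\text{ZFC}^-$-internal inductive construction above.
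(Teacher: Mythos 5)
Your forward direction and the reduction, inside $\mathcal{H}'$, to a countable dual basis supported on $B_0=\bigcup_n\mathrm{sprt}(x_n)$ are both fine. The gap is in the final transfer of the matrix $A$ down to $\mathcal{H}$, which is exactly the step you flag as the main obstacle. First, the constraint system at stage $n$ is not finite: the relations $\sum_{i\le n}r_ix_i=0$ form a submodule of $R^{n+1}$ that need not be finitely generated (for non-Noetherian $R$), and the equations $\sum_k A_{i,k}x_k=x_i$ take place in $M$, not in $R$; so ``satisfiability of a finite system of $R$-linear equations is first-order in $R$'' does not apply, and satisfiability of the stage-$n$ system is only a $\Sigma_1$ assertion over $\mathcal{H}$ with parameters --- upward, but not obviously downward, absolute. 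Second, and more fundamentally, even if each initial-segment system were known to be satisfiable in $\mathcal{H}$, assembling $A$ ``row-by-row by countable choice'' does not work: the tree of finite partial solutions is infinitely branching, so nonemptiness of every level does not produce an infinite branch, and a locally valid choice of rows $0,\dots,n$ made in $\mathcal{H}$ need not extend to row $n+1$. You cannot consult the $\mathcal{H}'$-witness ``as a certificate'' from inside $\mathcal{H}$, and that witness is the only thing guaranteeing extendability of your partial choices.

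The paper closes this gap by a different device: it defines, inside $\mathcal{H}$, the tree $T$ of all finite approximations to a dual basis for $\langle Z\rangle$ (with the crucial ordering clause that the supports of already-handled generators may not grow), observes that the dual basis in $\mathcal{H}'$ yields an infinite branch through $T$ in $\mathcal{H}'$, and then invokes absoluteness of well-foundedness between transitive $\mathrm{ZFC}^-$ models to obtain an infinite branch, hence a dual basis, in $\mathcal{H}$. Some appeal to Mostowski-type ($\Sigma^1_1$) absoluteness of this kind seems unavoidable here; your argument never invokes it, and the elementary absoluteness facts you cite are not strong enough to substitute for it. If you recast your matrix construction as such a tree and apply well-foundedness absoluteness, the proof goes through.
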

\begin{proof}
The forward direction holds regardless of the size of $M$ in $\mathcal{H}$, because projectivity of $M$ is witnessed by a dual basis for $M$, which is easily upward absolute from $\mathcal{H}$ to $\mathcal{H}'$ (alternatively, $\mathcal{H}$ sees that $M$ is a direct summand of a free module, and this is clearly upward absolute to $\mathcal{H}'$).

For the $\Leftarrow$ direction, in $\mathcal{H}$ fix a countable set $Z$ generating $M$, and in $\mathcal{H}$ let $T$ be the tree of finite attempts to build a dual basis for $M = \langle Z \rangle^M_R$.  In other words, in $\mathcal{H}$ fix an enumeration $\{ z_n \ : \ n < \omega \}$ of $Z$, and let $T$ be the tree whose nodes are triples
\[
t = \Big( n^t, B^t, (f^t_b)_{b \in B^t} \Big)
\]
such that
\begin{enumerate}
 \item $n^t \in \omega$;
 \item $B^t$ is a finite subset of $M$;
 \item Each $f^t_b$ is a function from 
 \[
  \{ z_n \ : \ n < n^t \}  \to R
  \]
  that lifts to an $R$-module homomorphism from $\langle \{ z_n \ : \ n < n^t \} \rangle \to R$;
 \item For each $n < n_t$,
 \[
 z_n = \sum_{b \in B^t} f_b(z_n) b.
 \]

\end{enumerate}
with tree ordering as follows:  $t > s$ if $n_t > n_s$, $B^t \supseteq B^s$, the $f^t_b$'s extend the $f^s_b$'s for all $b \in B^s$; and, importantly, for all $n < n^s$, 
\[
\{ b \in B^t \ : \ f_b^t(z_n) \ne 0   \} \subseteq B^s.
\]
The last requirement ensures that branches through the tree won't make the support of any member of $Z$ infinite.

Now since $M$ is projective in $\mathcal{H}'$, there is a dual basis for $M$ in $\mathcal{H}'$, and this dual basis can be used to recursively build an infinite branch through $T$ in $\mathcal{H}'$.  Since $T \in \mathcal{H}$, absoluteness of wellfoundedness between transitive $\text{ZFC}^-$ models ensures that there is an infinite branch through $T$ in $\mathcal{H}$ too.  And any infinite branch through $T$ yields a dual basis for $M$.\footnote{It is clear that a cofinal branch through $T$ yields a set $B \subseteq M$ and homomorphisms $f_b: \langle Z \rangle = M \to R$ such that for every $\boldsymbol{z \in Z}$, $\text{sprt}(z):= \{ b \in B \ : \ f_b(z) \ne 0 \}$ is finite, and $z = \sum_{\text{sprt}(z)} f_b(z) b$.  It is then routine to check that the latter also holds for all $x \in \langle Z \rangle$; i.e., if $x \in \langle z_1, \dots, z_k \rangle$, then $\text{sprt}(x) \subseteq \bigcup_{i \le k} \text{sprt}(z_i)$, and $x = \sum_{b \in \bigcup_{i \le k} \text{sprt}(z_i)} f_b(x)b$. }
\end{proof}

An easier argument, just using  closure of $\mathcal{H}$ in $\mathcal{H}'$ to absorb the dual basis, yields the following.  Note that for $\mu = \omega_1$ the following lemma is superfluous, since Lemma \ref{lem_AbsCfilt} got the same conclusion \textbf{without} assuming $<\omega_1$-closure of $\mathcal{H}$ in $\mathcal{H}'$.
\begin{lemma}\label{lem_ProjAbsMuClosed}
Let $\mathcal{H} \subset \mathcal{H}'$ be transitive $\text{ZFC}^-$ models, and assume $\mu$ is an infinite regular cardinal from the point of view of $\mathcal{H}$ such that $\mathcal{H}$ is closed under $<\mu$-length sequences from $\mathcal{H}'$.  Then for any ring $R \in \mathcal{H}$ and any $R$-module $M \in \mathcal{H}$ such that $\mathcal{H} \models$ ``$M$ is $<\mu$-generated",
\[
\mathcal{H} \models \ M \text{ is projective} \iff \ \mathcal{H}' \models M \text{ is projective.}
\]

\end{lemma}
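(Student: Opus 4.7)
The plan is to mimic the argument of Lemma~\ref{lem_Proj_Abs}, but bypass the tree construction by using $<\mu$-closure to directly pull a suitable restriction of the external dual basis into $\mathcal{H}$.

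The forward direction is essentially identical to the forward direction of Lemma~\ref{lem_Proj_Abs}: a dual basis for $M$ in $\mathcal{H}$ remains one in $\mathcal{H}'$, since being a dual basis is upward absolute (every verification requirement involves only finitely many elements of $M$ and of $R$, together with operations of $R$).

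For the backward direction, work in $\mathcal{H}$ and fix a generating set $Z \subseteq M$ with $|Z| < \mu$. In $\mathcal{H}'$, fix a dual basis $\mathcal{D} = \bigl( B, (f_b)_{b \in B} \bigr)$ for $M$. For each $z \in Z$ the support $\text{sprt}_{\mathcal{D}}(z)$ is finite, so
\[
B' := \bigcup_{z \in Z} \text{sprt}_{\mathcal{D}}(z)
\]
has size $<\mu$ in $\mathcal{H}'$. The sequence $\bigl\langle f_b(z) \ : \ b \in B', \ z \in Z \bigr\rangle$ then has length $<\mu$, so by $<\mu$-closure of $\mathcal{H}$ in $\mathcal{H}'$ it lies in $\mathcal{H}$; in particular $B' \in \mathcal{H}$ and, for each $b \in B'$, the function $g_b := f_b \restriction Z$ lies in $\mathcal{H}$.

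Now work in $\mathcal{H}$. For each $b \in B'$, define $\tilde{f}_b : M \to R$ by $\tilde{f}_b\bigl(\sum_i r_i z_i\bigr) := \sum_i r_i g_b(z_i)$. Well-definedness and $R$-linearity of $\tilde{f}_b$ are absolute statements (they amount to checking that whenever $\sum_i r_i z_i = \sum_j s_j z_j$ in $M$, the corresponding $R$-linear combinations in $R$ agree), and they hold in $\mathcal{H}'$ because $\tilde{f}_b$ agrees there with the linear map $f_b$; hence they hold in $\mathcal{H}$. Thus $\mathcal{D}' := \bigl( B', (\tilde{f}_b)_{b \in B'} \bigr)$ is a legitimate candidate dual basis lying in $\mathcal{H}$. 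To verify it is a dual basis: for each $z \in Z$ we have $\tilde{f}_b(z) = f_b(z)$, so $\text{sprt}_{\mathcal{D}'}(z) = \text{sprt}_{\mathcal{D}}(z)$ is finite, and $z = \sum_{b \in \text{sprt}_{\mathcal{D}}(z)} f_b(z) b$ witnesses the dual basis identity on $z$; extending by $R$-linearity gives finite support and the dual basis identity for every $x \in \langle Z \rangle = M$, exactly as in the last footnote of Lemma~\ref{lem_Proj_Abs}.

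The only mild obstacle is making sure the data one extracts from $\mathcal{D}$ really does end up in $\mathcal{H}$: one must be careful to record only the restrictions $f_b \restriction Z$ for $b \in B'$ (a $<\mu$-sized object), not the full functions $f_b : M \to R$, which need not lie in $\mathcal{H}$ if $|M|^{\mathcal{H}} \ge \mu$. Once the restrictions are absorbed, the rest is routine.
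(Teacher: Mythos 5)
Your proof is correct and follows exactly the route the paper indicates for this lemma (the paper gives no detailed proof, only the remark that one uses $<\mu$-closure of $\mathcal{H}$ in $\mathcal{H}'$ to absorb the dual basis). Your elaboration is sound, and you rightly flag the one real subtlety --- that one must absorb only the restrictions $f_b\restriction Z$ for the $<\mu$-many relevant $b$'s and then re-extend by linearity inside $\mathcal{H}$, since the full maps $f_b:M\to R$ need not themselves be elements of $\mathcal{H}$.
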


\subsection{Filtrations}\label{subsec_Filtrations}

If $X$ is a set, a \textbf{filtration of $\boldsymbol{X}$} is a sequence $\langle X_i \ : \ i < \eta \rangle$, for some ordinal $\eta$, such that
\begin{itemize}
 \item $X_i \subseteq X_j$ whenever $i < j$ (not necessarily proper containment);
 \item $X_i = \bigcup_{k < i} X_k$ whenever $i$ is a limit ordinal; and
 \item $X = \bigcup_{i < \eta} X_i$.
\end{itemize}

\noindent \textbf{Note:} we do \emph{not} require that $\eta$ is a cardinal, nor that each $X_i$ is of smaller cardinality than $X$.  Our definition above adheres to the common notion of filtration from the module theory literature (e.g., G\"obel-Trlifaj~\cite{MR2985554}), though some set theory literature (including previous papers of the author) have included those additional assumptions in the definition of \emph{filtration}.  We will often want filtrations to have these additional properties, however, as discussed in Section \ref{sec_Hill_and_related}.

Let $\mathcal{C}$ be a class of $R$-modules.  An $R$-module $M$ is said to be $\boldsymbol{\mathcal{C}}$\textbf{-filtered} if there exists a filtration $\vec{M} = \langle M_i \ : \ i < \eta \rangle$ of $M$ such that $M_0 = \{ 0 \}$ and for all $i < \eta$, if $i+1 < \eta$ then
\[
\frac{ M_{i+1} }{M_i } \text{ is isomorphic to an element of } \mathcal{C}.
\]

\noindent We say that \textbf{$\boldsymbol{\mathcal{C}}$ is closed under transfinite extensions} if every $\mathcal{C}$-filtered module is an element of $\mathcal{C}$.  Common examples of classes closed under transfinite extensions are the class of projective modules, the class of free modules, and the roots of $\text{Ext}(-,N)$ for any fixed module $N$ (``Eklof's Lemma"; see \cite{MR453520}).

\begin{lemma}\label{lem_ConcatenateFilt}
If $\langle M_i \ : \ i < \eta \rangle$ is a $\mathcal{C}$-filtration of $M$, then for all $i < j < \eta$, $M_j/M_i$ is $\mathcal{C}$-filtered.
\end{lemma}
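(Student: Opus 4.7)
The plan is to exhibit an explicit $\mathcal{C}$-filtration of $M_j/M_i$ by taking the quotient of the tail of the given filtration by $M_i$. Concretely, I would define
\[
N_\alpha := M_{i+\alpha}/M_i \quad \text{for each ordinal } \alpha \text{ with } i+\alpha \le j,
\]
and claim that $\langle N_\alpha \rangle$ is a $\mathcal{C}$-filtration of $M_j/M_i$.

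First I would check the three clauses in the definition of a filtration from Section \ref{subsec_Filtrations}. Clearly $N_0 = M_i/M_i = \{0\}$, and monotonicity $N_\alpha \subseteq N_\beta$ for $\alpha \le \beta$ follows immediately from $M_{i+\alpha} \subseteq M_{i+\beta}$. For continuity at a limit $\alpha$, I would use the fact that taking quotients by a fixed submodule commutes with unions of chains: since $M_{i+\alpha} = \bigcup_{\beta < \alpha} M_{i+\beta}$ by continuity of the original filtration, we have
\[
N_\alpha \;=\; \frac{\bigcup_{\beta < \alpha} M_{i+\beta}}{M_i} \;=\; \bigcup_{\beta < \alpha} \frac{M_{i+\beta}}{M_i} \;=\; \bigcup_{\beta < \alpha} N_\beta.
\]
Finally, $\bigcup_\alpha N_\alpha = M_j/M_i$ by choosing the index range so that $i+\alpha$ ranges up to $j$.

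Next, for the $\mathcal{C}$-filtration property, the successor step is a direct application of the third isomorphism theorem:
\[
\frac{N_{\alpha+1}}{N_\alpha} \;=\; \frac{M_{i+\alpha+1}/M_i}{M_{i+\alpha}/M_i} \;\cong\; \frac{M_{i+\alpha+1}}{M_{i+\alpha}},
\]
which by hypothesis is isomorphic to an element of $\mathcal{C}$.

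I do not expect a genuine obstacle here; the argument is a bookkeeping verification whose only subtle point is making sure that the ordinal re-indexing $\alpha \mapsto i + \alpha$ is well-defined up to the desired top index and that the limit clause survives the quotient, both of which are handled above.
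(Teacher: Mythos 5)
Your proposal is correct and is essentially the paper's own argument: the paper also takes the quotient of the tail $\langle M_k/M_i : k \in [i,j)\rangle$ (or $[i,j]$ when $j$ is a successor) and identifies the consecutive factors via $\frac{M_{k+1}/M_i}{M_k/M_i}\simeq \frac{M_{k+1}}{M_k}$; your version merely makes the re-indexing $k = i+\alpha$ and the limit-continuity check explicit. No gap.
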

\begin{proof}
Fix $i < j$ where $j < \eta$.  Then
\[
\Big\langle M_k/M_i \ : \ k \in [i,j) \Big\rangle
\]
is a filtration of $M_j/M_i$ if $j$ is a limit ordinal, and \[
\Big\langle M_k/M_i \ : \ k \in [i,j] \Big\rangle
\]
is a filtration of $M_j/M_i$ if $j$ is a successor ordinal.  And each has consecutive factors that are (isomorphic to an element) in $\mathcal{C}$, since $\frac{M_{k+1}/M_i}{M_k/M_i} \simeq \frac{M_{k+1}}{M_k}$.
\end{proof}

\subsection{Stationary sets}

If $X$ is a set and $\mu$ is a regular uncountable cardinal, $\wp_\mu(X)$ denotes the set
\[
\{ Z \subset X \ : \ |Z| < \mu  \}.
\]
We sometimes write $[X]^\omega$ instead of $\wp_{\omega_1}(X)$.  We also use $[X]^{<\omega}$ to denote all finite subsets of $X$.  A set $C$ is called  \textbf{closed and unbounded (club) in $\boldsymbol{\wp_\mu(X)}$} if and only if $C \subseteq \wp_\mu(X)$ and:
\begin{itemize}
 \item $C$ is $\subseteq$-cofinal in $\wp_\mu(X)$; and
 \item If $\vec{Z}$ is a $\subseteq$-increasing chain of members of $C$ of length strictly less than $\mu$, then the union of the chain is in $C$. 
\end{itemize}
A set $S$ is called \textbf{stationary in $\boldsymbol{\wp_\mu(X)}$} if it meets every closed unbounded subset of $\wp_\mu(X)$.  We need a lemma due to Kueker (see \cite{MR1940513}):
\begin{lemma}\label{lem_Kueker}
Assume $X$ is a set and $X \supseteq \mu$.  Then $D$ contains a club in $\wp_\mu(X)$ if and only if there exists an $F:[X]^{<\omega} \to X$ such that
\[
C_{F,\mu,X}:= \{ z \in \wp_\mu(X) \ : \  z \cap \mu \text{ is an ordinal and } z \text{ is closed under } F \}
\]
is contained in $D$.  For $\mu = \omega_1$, the ``$z \cap \mu$ is an ordinal" is unnecessary in the definition of $C_{F,\mu,X}$.\footnote{For $\mu \ge \omega_2$ this technical requirement is essential, due to the possibility of \emph{Chang's Conjecture} holding (see \cite{MattHandbook}).}  
\end{lemma}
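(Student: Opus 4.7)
The plan is to establish each direction of the biconditional, with the $(\Rightarrow)$ direction carrying most of the weight. For the $(\Leftarrow)$ direction, I would show that $C_{F,\mu,X}$ is already a club in $\wp_\mu(X)$, so that $C_{F,\mu,X}\subseteq D$ immediately places a club inside $D$. Closure under $\subseteq$-chains of length $<\mu$ is routine: both closure under the finitary $F$ and the property $z\cap\mu\in\mathrm{Ord}$ are preserved by such unions. For cofinality, given $w\in\wp_\mu(X)$, I would iteratively alternate closing under $F$ and adjoining the initial segment of $\mu$ below the current supremum of $z\cap\mu$; regularity of $\mu$ together with the finite arities of $F$ keeps each stage of cardinality $<\mu$, and the $\omega$-stage union lies in $C_{F,\mu,X}$ and contains $w$.

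For $(\Rightarrow)$, fix a club $C\subseteq D$. By $\subseteq$-cofinality, choose $Z_s\in C$ with $s\subseteq Z_s$ for every $s\in[X]^{<\omega}$, and fix a surjective enumeration $e_s\colon|Z_s|\to Z_s$; note $|Z_s|<\mu$. The goal is to design $F$ so that every $z\in C_{F,\mu,X}$ satisfies $Z_s\subseteq z$ for all finite $s\subseteq z$; then $z$ is the $\subseteq$-directed union of fewer than $\mu$ members of $C$, and rewriting this as an increasing chain of length $<\mu$ (via a well-ordering of $z$) forces $z\in C\subseteq D$. I would define $F(t)=e_{t\setminus\{\alpha\}}(\alpha)$, where $\alpha$ is the largest ordinal in $t\cap\mu$ with $\alpha<|Z_{t\setminus\{\alpha\}}|$, together with auxiliary clauses forcing $|Z_s|\in z$ for each finite $s\subseteq z$ and forcing $z\cap\mu$ to be a nonzero limit ordinal. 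If $z\in C_{F,\mu,X}$ with $z\cap\mu=\gamma$, then $\gamma$ exceeds each $|Z_s|$ with $s\in[z]^{<\omega}$, and closure under $F$ gives $e_s(\alpha)\in z$ for every $\alpha<|Z_s|$, so $Z_s\subseteq z$.

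The main obstacle will be the $(\Rightarrow)$ direction for $\mu\ge\omega_2$: a single finitary function $[X]^{<\omega}\to X$ cannot directly list $<\mu$-sized sets $Z_s$ without auxiliary ordinal parameters, which is exactly what the clause "$z\cap\mu$ is an ordinal" provides. For $\mu=\omega_1$ each $Z_s$ is countable and can be enumerated by $\omega$; using natural numbers in $X\supseteq\omega$ as parameters, and adding a sub-function of $F$ that forces $\omega\subseteq z$, lets closure under $F$ absorb every $Z_s$ without any ordinal-initial-segment requirement, which is why that clause is dispensable in the $\omega_1$ case.
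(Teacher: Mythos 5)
First, note that the paper does not prove this lemma at all; it is quoted from the literature (Jech, and Foreman's Handbook chapter for the $\mu\ge\omega_2$ subtleties), so there is no in-paper argument to compare against. Your $(\Leftarrow)$ direction is fine: $C_{F,\mu,X}$ is indeed a club, and your alternation between $F$-closure and adjoining initial segments of $\mu$ is the standard way to see unboundedness.

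The $(\Rightarrow)$ direction, however, has a genuine gap at its central step. You conclude from ``$Z_s\subseteq z$ for every $s\in[z]^{<\omega}$'' that $z$ is a \emph{directed} union of fewer than $\mu$ members of $C$ and that such a union can be ``rewritten as an increasing chain of length $<\mu$ via a well-ordering of $z$,'' hence lies in $C$. Two problems. First, the family $\{Z_s : s\in[z]^{<\omega}\}$ is not directed as you have chosen it: you only require $s\subseteq Z_s\in C$, and there is no reason that $Z_s\cup Z_t$ is contained in any $Z_u$. This is repairable by choosing the $Z_s$ monotonically (by recursion on $|s|$, take $Z_s\in C$ containing $s\cup\bigcup_{s'\subsetneq s}Z_{s'}$), and that repair is needed even for $\mu=\omega_1$. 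Second, and more seriously: a club in $\wp_\mu(X)$ is only assumed closed under unions of $\subseteq$-\emph{increasing chains} of length $<\mu$, and a directed union of $<\mu$ members of $C$ need not be a member of $C$ when $\mu\ge\omega_2$. For $\mu=\omega_1$ the index poset $[z]^{<\omega}$ is countable and directed, so it has a cofinal increasing $\omega$-chain and your argument closes; but for $|z|=\omega_1$ the poset $[z]^{<\omega}$ has no cofinal chain at all, and the ``well-ordering of $z$'' device only produces intermediate sets that are themselves directed unions of members of $C$, not members of $C$ --- the induction never closes. This is exactly the point where the hypothesis ``$z\cap\mu$ is an ordinal'' must do real work in the proof that $F$-closed sets lie in $C$ (for instance, by deriving from $C$ an ordinary club of ordinals below $\mu$, or by an induction on $|X|$ using the linear order of $N\cap|X|$). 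Your diagnosis that the ordinal clause merely supplies ``auxiliary ordinal parameters'' for enumerating the $<\mu$-sized sets $Z_s$ misses this: one could avoid the enumeration issue entirely by allowing set-valued $F:[X]^{<\omega}\to\wp_\mu(X)$, and the statement would \emph{still} fail without the ordinal clause (this is the Chang's Conjecture obstruction mentioned in the paper's footnote). So the hard direction for $\mu\ge\omega_2$ is not established by your argument.
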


For a cardinal $\theta$, $H_\theta$ denotes the collection of sets of hereditary cardinality $<\theta$.  If $\theta$ is regular and uncountable, then $H_\theta$ is transitive and $(H_\theta,\in)$ satisfies all axioms of ZFC except possibly the powerset axiom.

\begin{corollary}\label{cor_ClubRef}
Suppose $\mu$ is a regular uncountable cardinal, $X$ is a set of cardinality at least $\mu$, $C$ is a closed unbounded subset of $\wp_\mu(X)$, $\theta$ is a regular cardinal with $(X,C,\mu) \in H_\theta$, and $W$ is an elementary substructure of $(H_\theta, \in)$ such that $(X,C,\mu) \in W$ and $\mu \subseteq W$.  Then $C \cap \wp_\mu(W \cap X)$ contains a club in $\wp_\mu(W \cap X)$.
\end{corollary}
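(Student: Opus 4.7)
The plan is to apply Kueker's Lemma (Lemma \ref{lem_Kueker}) in both directions, bridged by elementarity of $W$. First, since $C$ is club in $\wp_\mu(X)$, the forward direction of Kueker's Lemma yields some $F \colon [X]^{<\omega} \to X$ with $C_{F,\mu,X} \subseteq C$. The statement ``there exists such an $F$'' is a statement of $H_\theta$ with parameters $(X, C, \mu) \in W$, so by elementarity of $W \prec H_\theta$ we may pick such an $F$ lying in $W$.

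Next, I would verify that restricting $F$ to $W$ makes sense. Since $F \in W$, elementarity gives that for every finite $a \subseteq W \cap X$, $F(a) \in W \cap X$. Hence $G := F \restriction [W \cap X]^{<\omega}$ is a function from $[W \cap X]^{<\omega}$ into $W \cap X$. Note also that $\mu \subseteq W \cap X$, because $\mu \subseteq W$ and $\mu \subseteq X$; in particular $|W \cap X| \ge \mu$, so $\wp_\mu(W \cap X)$ is a meaningful object.

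Finally, apply Kueker's Lemma in the reverse direction with $W \cap X$ in place of $X$ and $G$ in place of $F$: the set
\[
C_{G, \mu, W \cap X} = \{ z \in \wp_\mu(W \cap X) \ : \ z \cap \mu \text{ is an ordinal and } z \text{ is closed under } G \}
\]
contains a club in $\wp_\mu(W \cap X)$. But any $z$ in this set is a subset of $X$ of size $<\mu$ with $z \cap \mu$ an ordinal, and is closed under $F$ (because every finite $a \subseteq z$ lies in $W \cap X$, so $G(a) = F(a)$); hence $z \in C_{F,\mu,X} \subseteq C$. Thus $C_{G, \mu, W \cap X} \subseteq C \cap \wp_\mu(W \cap X)$, giving the desired club.

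The only slightly delicate step is the use of elementarity to pull $F$ into $W$ in the first paragraph; everything else is bookkeeping about the fact that $F$ and its restriction $G$ agree on finite subsets of $W \cap X$. I do not anticipate a serious obstacle.
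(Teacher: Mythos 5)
Your proof is correct and is essentially the paper's own argument: pull an $F$ witnessing $C_{F,\mu,X}\subseteq C$ into $W$ by elementarity, observe $W\cap X$ is closed under $F$, and apply Kueker's Lemma in reverse to the restriction. The one wrinkle is that you assert $\mu\subseteq X$, which is not among the hypotheses (only $|X|\ge\mu$ is); the paper disposes of this by noting one may assume $X\supseteq\mu$ without loss of generality, since any elementary substructure of $(H_\theta,\in)$ containing $X$ sees a bijection between $X$ and an ordinal.
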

\begin{proof}
We can without loss of generality assume that $X \supseteq \mu$, since any elementary substructure of $(H_\theta,\in)$ that has $X$ as an element will see a bijection between $X$ and an ordinal.  Then by Lemma \ref{lem_Kueker}, there is an $F:[X]^{< \omega} \to X$ such that $C_{F,\mu, X} \subseteq  C$. By elementarity of $W$ in $H_\theta$ and the fact that $C \in W$, we can assume $F \in W$.  Then by elementarity of $W$, 
\begin{equation}
\label{eq_WcapXclosedF}
W \cap X \text{ is closed under } F.
\end{equation}
In other words, $F|W:=F \restriction [W \cap X]^{<\omega}$ maps into $W \cap X$, and hence the set $C_{F|W,\mu,W \cap X}$ contains a closed unbounded subset of $\wp_\mu(W \cap X)$.  And, clearly, this set is contained in $C_{F,\mu,X}$ (and therefore in $C$).
\end{proof}

We also need another standard corollary of Kueker's characterization of club sets (see \cite{MattHandbook}):
\begin{lemma}\label{lem_CharClubElemSub}
Suppose $\mu$ is a regular uncountable cardinal, $X$ is any set, and $D$ contains a closed unbounded subset of $\wp_\mu(X)$.  Then for all regular $\theta$ such that $X,D \in H_\theta$, and all $W \prec (H_\theta,\in,X,D)$:   if $|W|<\mu$ and $W \cap \mu$ is an ordinal, then $W \cap X \in D$.
\end{lemma}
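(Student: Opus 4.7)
The plan is to invoke Kueker's characterization of club sets (Lemma \ref{lem_Kueker}) to reduce the claim to closure of $W \cap X$ under a single finitary function, which then follows from elementarity.

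Step 1 (Kueker setup). Since $D$ contains a club in $\wp_\mu(X)$, Lemma \ref{lem_Kueker} yields a function $F: [X]^{<\omega} \to X$ with $C_{F,\mu,X} \subseteq D$. The existence of such an $F$ is a statement over $H_\theta$ with parameters $X$, $D$, and $\mu$. I would then use elementarity of $W$ in $(H_\theta, \in, X, D)$ — treating $\mu$ as an additional constant in the language in the standard way (so that the lemma is being applied to $W$'s containing the relevant parameter $\mu$) — to pull such an $F$ down into $W$.

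Step 2 (verifying the three defining conditions of $C_{F,\mu,X}$). With $F \in W$ in hand, I would check that $W \cap X$ lies in $C_{F,\mu,X}$: (i) $|W \cap X| \leq |W| < \mu$, so $W \cap X \in \wp_\mu(X)$; (ii) $W \cap \mu$ is an ordinal, by hypothesis; and (iii) since $F \in W$ is a function, elementarity gives $F(\bar a) \in W \cap X$ for every finite tuple $\bar a$ from $W \cap X$, so $W \cap X$ is closed under $F$. Combining these three facts with $C_{F,\mu,X} \subseteq D$ yields $W \cap X \in D$, as required.

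The only delicate point is ensuring that the witnessing function $F$ from Kueker's Lemma can be located inside $W$; this is a minor bookkeeping matter about which parameters are encoded in the elementary substructure (and is the reason the statement of the lemma is phrased in terms of $W \prec (H_\theta, \in, X, D)$ with the implicit understanding that $\mu$ is also available). Once this convention is fixed, the rest is a direct unpacking of Kueker's Lemma, so no serious obstacle is expected.
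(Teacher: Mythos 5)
Your proof is correct and is precisely the standard argument the paper has in mind: the lemma is stated without proof as a routine corollary of Kueker's characterization, and your Kueker-plus-elementarity argument mirrors the one the paper writes out for the neighboring Corollary \ref{cor_ClubRef}, including the implicit reduction to the case $X \supseteq \mu$ (needed both so that Lemma \ref{lem_Kueker} applies and so that $(W \cap X) \cap \mu = W \cap \mu$). Your bookkeeping worry about locating $F$ inside $W$ is handled exactly as you suggest; alternatively, in the non-degenerate case $\mu$ is definable in $(H_\theta,\in,X,D)$ from $D$ (as the supremum of $|z|^{+}$ over $z \in D$), so $\mu \in W$ automatically and the witnessing $F$ can be found in $W$ by elementarity.
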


\section{Stationary sets and potentially filtered modules}\label{sec_Hill_and_related}

In this section we investigate the notion of being $\mathcal{C}$-filtered in certain forcing extensions.  To motivate the main technical lemmas of this section, suppose $\mathcal{C}$ is a collection of countably presented modules, and suppose $M$ is a module that is $\aleph_1$-generated and $\mathcal{C}$-filtered.  Recall that ``$M$ is $\mathcal{C}$-filtered" means that there exists a filtration
\[
\vec{M}=\langle M_i \ : \ i < \eta \rangle
\]
of $M$ such that $M_0=\{0\}$ and $M_{i+1}/M_i$ is (isomorphic to a module) in $\mathcal{C}$ for all $i$.  But the definition does not require that $\eta$ is a cardinal, much less that $\eta = \omega_1$ or that the $M_i$'s themselves are countably generated.  On the other hand, since $M$ has a generating set of size $\aleph_1$, there exists a $\subseteq$-increasing and continuous sequence $\vec{Z}=\langle Z_i \ : \ i < \omega_1 \rangle$ of countable subsets of $M$, whose union generates $M$.  For various reasons, it would be nice to know that we can replace $\vec{M}$ with another $\mathcal{C}$-filtration that looks more like $\vec{Z}$; in particular, which has length exactly $\omega_1$ and whose entries are countably generated.  The following lemma, which follows from the highly versatile \emph{Hill Lemma}, will allow us to do exactly that (see also Corollary \ref{cor_Aleph_1_gen_CFilt}):

\begin{lemma}\label{lem_HillClub}
Suppose $\mu$ is a regular uncountable cardinal and $\mathcal{C}$ is a class of $<\mu$-presented modules.  Suppose $M$ is $\mathcal{C}$-filtered.  Then there is a closed unbounded subset $D$ of $\wp_{\mu}(M)$ such that whenever $Z \subseteq Z'$ are both in $D$, 
\[
\langle Z \rangle \text{ and } \frac{\langle Z' \rangle}{\langle Z \rangle} \text{ are } \mathcal{C} \text{-filtered (via filtrations of length $<\mu$).}
\]

\end{lemma}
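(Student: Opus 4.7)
The plan is to apply \emph{Hill's Lemma} (see G\"obel-Trlifaj~\cite{MR2985554}) with parameter $\mu$ to the given $\mathcal{C}$-filtration of $M$.  This produces a family $\mathcal{F}$ of submodules of $M$, closed under arbitrary sums and intersections, enjoying: (a) for every $N \subseteq P$ in $\mathcal{F}$ the quotient $P/N$ is $\mathcal{C}$-filtered via a filtration whose members lie in $\mathcal{F}$; and (b) for every $N \in \mathcal{F}$ and every $S \subseteq M$ of size $<\mu$, some $P \in \mathcal{F}$ contains $N \cup S$ with $P/N$ being $<\mu$-presented.  A standard consequence, obtainable by a one-generator-at-a-time iteration of (b) inside $\mathcal{F}$, is that whenever $N \subseteq P$ lie in $\mathcal{F}$ with $P$ being $<\mu$-generated, the quotient $P/N$ admits a $\mathcal{C}$-filtration of length $<\mu$.

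Granted this Hill consequence, I would set
\[
D := \Big\{ Z \in \wp_\mu(M) \ : \ \langle Z \rangle \in \mathcal{F} \Big\}
\]
and verify that $D$ is a club in $\wp_\mu(M)$.  Closure of $D$ under $\subseteq$-increasing chains of length $<\mu$ is immediate from closure of $\mathcal{F}$ under arbitrary sums, since the submodule generated by an increasing union equals the sum of the individual submodules.  For $\subseteq$-cofinality, given any $Z_0 \in \wp_\mu(M)$, apply (b) with $N = 0$ and $S = Z_0$ to obtain $P \in \mathcal{F}$ with $Z_0 \subseteq P$ and $P$ being $<\mu$-presented, hence $<\mu$-generated; any $<\mu$-sized generating set $Z \supseteq Z_0$ of $P$ then witnesses an extension of $Z_0$ in $D$.

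Finally, for $Z \subseteq Z'$ both in $D$, both $\langle Z \rangle$ and $\langle Z' \rangle$ lie in $\mathcal{F}$ and are $<\mu$-generated.  Applying the Hill consequence to $0 \subseteq \langle Z \rangle$ yields a $\mathcal{C}$-filtration of $\langle Z \rangle$ of length $<\mu$, and applying it to $\langle Z \rangle \subseteq \langle Z' \rangle$ yields a $\mathcal{C}$-filtration of $\langle Z' \rangle / \langle Z \rangle$ of length $<\mu$.  The main technical obstacle I anticipate is the Hill consequence itself: the one-generator-at-a-time construction must be kept inside the prescribed $<\mu$-generated ambient module by intersecting at each stage with that module (which is where intersection-closure of $\mathcal{F}$ becomes essential), each successive $<\mu$-presented quotient produced by (b) must be further refined into a short $\mathcal{C}$-filtration by recursion on the same template, and regularity of $\mu$ is what ultimately bounds the total filtration length below $\mu$.
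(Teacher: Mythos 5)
Your proposal is essentially the paper's proof: apply Hill's Lemma to get the family $\mathcal{F}$, let $D$ be the $<\mu$-sized sets generating members of $\mathcal{F}$, get closure from sum-closure of $\mathcal{F}$ and unboundedness from the extension property applied to $N=\{0\}$, and get the quotient property from the fact that quotients of nested members of $\mathcal{F}$ are $\mathcal{C}$-filtered. The only divergence is the length bound, where you propose a recursive refinement via the extension property that you leave as an ``anticipated obstacle''; the paper handles this more directly by deleting superfluous indices from the filtration of $\langle Z'\rangle/\langle Z\rangle$ supplied by Hill's Lemma and observing that a strictly increasing continuous chain whose union is $<\mu$-generated must, by regularity of $\mu$, have length $<\mu$.
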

\begin{proof}
Let $\vec{M} = \langle M_i \ : \ i < \eta \rangle$ witness that $M$ is $\mathcal{C}$-filtered.  By the Hill Lemma (Theorem 7.10 of G\"obel-Trlifaj~\cite{MR2985554}), there is a family $\mathcal{F}$ of submodules of $M$ such that  
\begin{enumerate*}[label=(\roman*)]
\item\label{item_ContainsFilt} $\mathcal{F}$ contains $\{ M_i \ : \ i < \eta \}$;
\item\label{item_ClosedUnderSums} $\mathcal{F}$ is closed under arbitrary sums and intersections; 
\item\label{item_QuotientsCfiltered} whenever $N \subset N'$ are both in $\mathcal{F}$ then $N'/N$ is $\mathcal{C}$-filtered; and
\item\label{item_Cof}  whenever $N \in \mathcal{F}$ and $X$ is a $<\mu$-sized subset of $M$, there is an $N' \in \mathcal{F}$ containing $\langle N \cup X \rangle$ such that $N'/N$ is $<\mu$-presented.
\end{enumerate*}

Set 
\[
D:= \{ Z \subseteq M \ : \ |Z| < \mu \text{ and } \langle Z \rangle \in \mathcal{F} \},
\]
i.e., $D$ is the collection of all subsets of $M$ of size $<\mu$ that generate an element of $\mathcal{F}$.  We claim that $D$ is as desired.  Note first that $D$ is nonempty, because $M_1 = M_1/\{ 0 \}$ is (isomorphic to) an element of $\mathcal{C}$ and hence $<\mu$-presented (and an element of $\mathcal{F}$, by \ref{item_ContainsFilt}).  To see closure, suppose $\langle Z_k \ : \ k < \zeta \rangle$ is a $\subseteq$-increasing sequence from $D$, where $\zeta < \mu$.  Part \ref{item_ClosedUnderSums} ensures that $\bigcup_{k < \zeta} \langle Z_k \rangle = \left\langle \bigcup_{k < \zeta} Z_k \right\rangle$ is an element of $\mathcal{F}$; and since $\mu$ is regular and $\zeta < \mu$, $\bigcup_{k < \zeta} Z_k$ has size $<\mu$.  Hence, $\bigcup_{k < \zeta} Z_k \in D$.  To see that $D$ is $\subseteq$-cofinal in $\wp_\mu(M)$, fix a $<\mu$-sized subset $X$ of $M$.  By \ref{item_Cof}, (taking $N = M_0 = \{ 0 \}$) there is some $N' \in \mathcal{F}$ such that $X \subseteq N'$ and $N'/ \{ 0 \} \simeq N'$ is $<\mu$- presented; say $Z$ is a $<\mu$-sized set of generators for $N'$.  Then $N' = \langle Z \rangle = \langle X \cup Z \rangle$, and hence $X \cup Z \in D$.  Hence, $D$ is closed and unbounded in $\wp_\mu(M)$.

Now suppose $Z \subset Z'$ are both in $D$; then $\langle Z \rangle \subset \langle Z' \rangle$ are both in $\mathcal{F}$, and so by \ref{item_QuotientsCfiltered}, $\langle Z' \rangle / \langle Z \rangle$ is $\mathcal{C}$-filtered; say by 
\[
\big\langle U_\xi / \langle Z \rangle \ :  \ \xi < \zeta \big\rangle.
\]
We can without loss of generality assume there are no superfluous indices; i.e.\, that $U_{\xi}$ is a proper subset of $U_{\xi+1}$ for all $\xi < \zeta$.  Since the union of the $U_\xi$'s equals $\langle Z' \rangle$, $|Z'|< \mu$, and $\mu$ is regular, it follows that $\zeta < \mu$.  So it is a $\mathcal{C}$-filtration of $\langle Z' \rangle / \langle Z \rangle$ of length $<\mu$.

Now \ref{item_QuotientsCfiltered}, together with the fact that $M_0 = \{ 0 \} \in \mathcal{F}$, ensure that every element of $\mathcal{F}$ is $\mathcal{C}$-filtered; so, in particular, $\langle Z \rangle$ is $\mathcal{C}$-filtered whenever $Z \in D$.

\end{proof}


\begin{corollary}\label{cor_Aleph_1_gen_CFilt}
Suppose $\mu$ is regular and uncountable, $\mathcal{C}$ is a class of $<\mu$-presented modules, and $M$ is a $\mu$-generated module.  The following are equivalent:
\begin{enumerate}
 \item\label{item_C_filtered} $M$ is $\mathcal{C}$-filtered.
 \item\label{item_C_filtered_omega1} $M$ is $\mathcal{C}$-filtered by a filtration of (ordinal) length at most $\mu$, all of whose entries are $<\mu$-presented.
 \item\label{item_aa_aa_M} There is a closed unbounded $D \subseteq \wp_\mu(M)$ such that whenever $Z \subseteq Z'$ are both in $D$, $\langle Z' \rangle / \langle Z \rangle$ is $\mathcal{C}$-filtered, via a filtration of length strictly less than $\mu$.
 \item\label{item_aa_stat_M}  there is a closed unbounded set $D \subseteq \wp_\mu(M)$ such that for every $Z \in D$, 
\begin{equation*}
\begin{split}
S_Z:= \{ Z' \in D \ : \ & Z \subset Z' \text{ and } \langle Z' \rangle / \langle Z \rangle \text{ is } \mathcal{C} \text{-filtered} \\ 
& \text{via a $<\mu$-length filtration} \}
\end{split}
\end{equation*}
is stationary in $\wp_\mu(M)$.

\end{enumerate}
\end{corollary}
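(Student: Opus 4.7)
I plan to establish the cycle $(2) \Rightarrow (1) \Rightarrow (3) \Rightarrow (4) \Rightarrow (2)$. The implication $(2) \Rightarrow (1)$ is immediate from the definition of $\mathcal{C}$-filtered, and $(1) \Rightarrow (3)$ is exactly Lemma \ref{lem_HillClub}.

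For $(3) \Rightarrow (4)$, given a club $D$ witnessing (3), I would fix $Z \in D$ and observe that the tail $\{Z' \in D \ : \ Z \subsetneq Z'\}$ contains a club in $\wp_\mu(M)$ (the only edge case is when $\langle Z \rangle = M$, which forces $M$ to be $<\mu$-generated and makes the corollary trivial or vacuous). Every element of this tail is in $S_Z$ by (3), so $S_Z$ in fact contains a club, and is in particular stationary.

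For the main implication $(4) \Rightarrow (2)$, I would fix an enumeration $\{m_\gamma \ : \ \gamma < \delta\}$ of a generating set of $M$ with $\delta \leq \mu$, and recursively build a $\subseteq$-increasing continuous chain $\langle Z_\alpha \ : \ \alpha \leq \delta \rangle$ with $Z_0 = \emptyset$, $Z_{\alpha+1} \in S_{Z_\alpha}$, and $m_\alpha \in Z_{\alpha+1}$. The successor step is possible because any stationary subset of $\wp_\mu(M)$ is $\subseteq$-cofinal, meeting the club $\{Y \in \wp_\mu(M) \ : \ Z_\alpha \cup \{m_\alpha\} \subseteq Y\}$ of extensions. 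Limit stages $\alpha < \mu$ lie in $D$ by closure. Then $\langle \langle Z_\alpha \rangle \ : \ \alpha \leq \delta \rangle$ is a filtration of $M$ with $\langle Z_0 \rangle = \{0\}$, and each successor quotient $\langle Z_{\alpha+1} \rangle / \langle Z_\alpha \rangle$ is $\mathcal{C}$-filtered via a filtration of length $<\mu$. Splicing these short filtrations in (which is the converse direction of Lemma \ref{lem_ConcatenateFilt}) produces a $\mathcal{C}$-filtration of $M$ of total length at most $\mu \cdot \mu = \mu$ whose entries are $<\mu$-presented (since each intermediate submodule sits between two $<\mu$-generated modules and each step adds a $<\mu$-presented $\mathcal{C}$-subquotient), establishing (2).

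The main obstacle is the very first step of the recursion: starting at $Z_0 = \emptyset$ requires $S_\emptyset$ to be nonempty (ideally stationary), which is not directly asserted by (4) if $\emptyset \notin D$. I expect to resolve this by enlarging $D$ to $D \cup \{\emptyset\}$ (still a club) and verifying separately that $S_\emptyset$ is stationary in the enlargement: starting from an arbitrary $Z^* \in D$ and iterating the $S_Z$-construction a suitable number of stages produces stationarily many $Y \in D$ with $\langle Y \rangle$ itself $\mathcal{C}$-filtered by a $<\mu$-length filtration, by closure of the $\mathcal{C}$-filtered class under compatible increasing unions. This bookkeeping around the base case is the only subtle piece; the rest is a routine transfinite chain-building argument driven by the club/stationary interplay.
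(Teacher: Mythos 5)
Your cycle $(2)\Rightarrow(1)\Rightarrow(3)\Rightarrow(4)\Rightarrow(2)$ is essentially the paper's own route: Lemma \ref{lem_HillClub} drives the forward implications, and the recursive chain construction followed by splicing in the short filtrations of the successor quotients handles the closing implication, exactly as in the paper. You have also correctly isolated the one delicate point, the base case of the recursion. The problem is that your proposed repair of that point does not work.

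The hypotheses in $(3)$ and $(4)$ only constrain the quotients $\langle Z'\rangle/\langle Z\rangle$ for $Z$ already in $D$; they say nothing about the modules $\langle Z\rangle$ themselves. Iterating the $S_Z$-construction starting from an arbitrary $Z^*\in D$ therefore only certifies that $\langle Y\rangle/\langle Z^*\rangle$ is $\mathcal{C}$-filtered for the resulting $Y$; to conclude that $\langle Y\rangle$ itself is $\mathcal{C}$-filtered you would need to know that $\langle Z^*\rangle$ is, which is precisely the information you lack. Indeed, if one insists that a club need not contain $\emptyset$, no argument can close this gap, because $(3)$ and $(4)$ become strictly weaker than $(1)$: take $R=\mathbb{Z}$, $\mathcal{C}=\{\mathbb{Z}\}$, $M=(\mathbb{Z}/2\mathbb{Z})\oplus F$ with $F$ free on a basis $(e_i)_{i<\omega_1}$, and let $D$ consist of the countable $Z\subseteq M$ with $\langle Z\rangle=(\mathbb{Z}/2\mathbb{Z})\oplus\langle e_i : i\in a\rangle$ for some countable $a$. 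This $D$ is cofinal and closed under unions of nonempty countable chains, every quotient $\langle Z'\rangle/\langle Z\rangle$ for $Z\subseteq Z'$ in $D$ is free, yet $M$ is not free; in particular your claim that iterating the $S_Z$-construction yields stationarily many $Y\in D$ with $\langle Y\rangle$ itself $\mathcal{C}$-filtered fails here, since no $Y\in D$ generates a free group. The intended reading of $(3)$ and $(4)$ --- which is what Lemma \ref{lem_HillClub} actually delivers, and what the Stationary Logic rephrasing \eqref{eq_H_theta_aa_aa} makes explicit --- includes the additional clause that $\langle Z\rangle$ is $\mathcal{C}$-filtered via a $<\mu$-length filtration for every $Z\in D$ (equivalently, under the convention that $\emptyset$ belongs to every club, that $S_\emptyset$ is stationary by hypothesis). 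With that clause the base case is immediate: start the recursion at $Z_0=\emptyset$, or prepend a $<\mu$-length $\mathcal{C}$-filtration of $\langle Z_0\rangle$ for an arbitrary $Z_0\in D$. You should replace your separate stationarity argument for $S_\emptyset$ by this observation; as written, that step is the one place where your proof genuinely breaks.
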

\begin{proof}
Let $X$ be an $\mu$-sized generating set for $M$, and fix an enumeration $\{x_i \ : \ i < \mu \}$ of $X$.  

For the \eqref{item_C_filtered} implies \eqref{item_C_filtered_omega1} direction, suppose $M$ is $\mathcal{C}$-filtered.  By Lemma \ref{lem_HillClub} there is a closed unbounded $D \subseteq \wp_{\mu}(M)$ such that whenever $Z \subset Z'$ are both in $D$, $\langle Z' \rangle / \langle Z \rangle$ is $\mathcal{C}$-filtered.  Using that $D$ is closed unbounded in $\wp_{\mu}(M)$, recursively build a $\subseteq$-increasing and continuous chain $\langle Z_i \ : \ i < \mu \rangle$ of $<\mu$-sized subsets of $M$ such that for all $i < \mu$:
\begin{itemize}
 \item $x_i \in Z_{i+1}$;
 \item $Z_i \in D$
\end{itemize}
Now $\Big\langle \langle Z_i \rangle \ : \ i < \mu \Big\rangle$ is a filtration of $M$, but probably there are many $i$'s such that $\langle Z_{i+1} \rangle/ \langle Z_i \rangle$ is not in $\mathcal{C}$, but merely $\mathcal{C}$-filtered.\footnote{Of course if $\mathcal{C}$ is closed under transfinite extensions, or even just countable transfinite extensions, it does not matter.}  However, since $Z_i \subset Z_{i+1}$ are both in $D$, there is a $\mathcal{C}$-filtration 
\begin{equation}\label{eq_CfiltQuot}
\left\langle U^i_\xi / \langle Z_i \rangle \ : \ \xi < \zeta_i \right\rangle
\end{equation}
of the quotient $\langle Z_{i+1} \rangle/ \langle Z_i \rangle$ such that $\zeta_i < \mu$.  Then concatenating all sequences of the form
\[
\langle U^i_\xi \ : \ \xi < \zeta_i \rangle
\]
across all $i < \mu$ yields a $\mathcal{C}$-filtration of $M$ of length at most $\mu$; it is a $\mathcal{C}$ filtration because for each $i$ and $\xi$ such that $\xi+1 < \zeta_i$,
\[
\frac{U^i_{\xi+1}}{U^i_\xi} \simeq \frac{U^i_{\xi+1} / \langle Z_i \rangle}{U^i_{\xi} / \langle Z_i \rangle}
\]
which is in $\mathcal{C}$ because \eqref{eq_CfiltQuot} is a $\mathcal{C}$-filtration.

The \eqref{item_C_filtered_omega1} implies \eqref{item_aa_aa_M} direction follows from Lemma \ref{lem_HillClub}.  The \eqref{item_aa_aa_M} implies \eqref{item_aa_stat_M} is trivial.  Finally, assume \eqref{item_aa_stat_M}; we want to build a $\mathcal{C}$-filtration for $M$.  Fix a club $D \subseteq \wp_\mu(M)$ and a stationary $S_Z \subseteq \wp_\mu(M)$ for each $Z \in D$, as in the assumptions of \eqref{item_aa_stat_M}.  Recursively construct a $\subseteq$-increasing sequence $\langle Z_i \ : \ i < \mu \rangle$ such that:
\begin{itemize}
 \item $x_i \in Z_{i+1} \in S_{Z_i} \cap D$ for all $i < \mu$, provided that $Z_i \in D$ (otherwise halt the construction; note that if $S_{Z_i}$ is defined, then $S_{Z_i} \cap D$ is stationary in $\wp_\mu(M)$);
 \item $Z_i = \bigcup_{k < i}Z_k$ for all limit ordinals $i$.
\end{itemize}
\noindent Note that if $i$ is a limit ordinal below $\mu$ and $Z_k \in D$ for all $k < i$, then $Z_i \in D$ by closure of $D$ in $\wp_\mu(M)$.  So the construction never breaks down at any ordinal before $\mu$.  We have constructed a filtration $\vec{Z} = \Big\langle \langle Z_i \rangle \ : \ i < \mu \Big\rangle$ of $M$, such that for all $i < \mu$, $\langle Z_{i+1} \rangle / \langle Z_i \rangle$ is $\mathcal{C}$-filtered.  And this filtration of $\langle Z' \rangle / \langle Z \rangle$ is of length less than $\mu$, assuming no redundant entries.
\end{proof}

We remark that parts \eqref{item_aa_aa_M} and \eqref{item_aa_stat_M} in the statement of Corollary \ref{cor_Aleph_1_gen_CFilt} can be rephrased in terms of \emph{Stationary Logic} (see \cite{MR486629}), respectively, as
\begin{equation}\label{eq_H_theta_aa_aa}
\begin{split}
(H_\theta,\in, R,M, \mathcal{C} \cap H_\theta) \models &  \text{aa} Z \ \text{aa} Z' \ \langle Z \cap M \rangle \text{ and } \\
& \langle Z' \cap M  \rangle / \langle Z \cap M \rangle \text{ are } \mathcal{C} \text{-filtered} 
\end{split}
\end{equation}
and
\begin{equation}\label{eq_H_theta_aa_stat}
\begin{split}
(H_\theta,\in, R,M, \mathcal{C} \cap H_\theta) \models &  \text{aa} Z \ \textbf{stat} Z' \ \langle Z \cap M \rangle \text{ and } \\
& \langle Z' \cap M  \rangle / \langle Z \cap M \rangle \text{ are } \mathcal{C} \text{-filtered} 
\end{split}
\end{equation}
for any $\theta$ such that $(R,M) \in H_\theta$; here the interpretation of \emph{aa} and \emph{stat} depend on $\mu$, though the most interesting case for us is $\mu = \omega_1$.\footnote{Statements \eqref{eq_H_theta_aa_aa} and \eqref{eq_H_theta_aa_stat} involve some slight abbreviations; e.g. \eqref{eq_H_theta_aa_aa} should really say the following, where lowercase variables are first order:
\begin{equation*}
\begin{split}
(H_\theta,\in, R,M, \mathcal{C} \cap H_\theta) \models &   \text{aa} Z \ \text{aa} Z' \ \exists p  \ \exists p'  \ p = \langle Z \cap M \rangle, \ p' = \langle Z \cap M \rangle, \text{ and } p'/p \in \mathcal{C}.
\end{split}
\end{equation*}
}  The ability to replace the second \emph{aa} quantifier with a \emph{stat} quantifier is a key observation, since it allows us to prove Theorem \ref{thm_Cox_MM_Det} from a ``non-diagonal" version of stationary set reflection that, unlike assumption \eqref{eq_LSTOmega_2Laa} of Theorem \ref{thm_MeklerEtAl}, follows from Martin's Maximum.

Note that if $\mathcal{C}$ happens to be closed under transfinite extensions of length $<\mu$, then in the equivalences above, one can also require that $\langle Z \rangle \in \mathcal{C}$ for club-many $Z \in \wp_\mu(M)$.

\begin{definition}
Let $\mu$ be a regular (not necessarily uncountable) cardinal.  Given a class $\mathcal{C}$ modules, a module $M$ will be called \textbf{$\boldsymbol{<\mu}$-closed potentially $\boldsymbol{\mathcal{C}}$-filtered} if $M$ is $\mathcal{C}$-filtered in some $<\mu$-closed forcing extension of the universe.  We will say that $M$ is \textbf{$\boldsymbol{<\mu}$-closed potentially projective} if $M$ is projective in some $<\mu$-closed forcing extension of the universe.  For the case $\mu = \omega$, we will sometimes just say ``potentially $\mathcal{C}$-filtered" instead of ``$<\omega$ potentially $\mathcal{C}$-filtered", since every forcing poset is, trivially, $<\omega$ closed.
\end{definition}

\begin{remark}
The parameter $\mathcal{C}$ in the definition above refers to the actual class in the ground model, \textbf{not} to any interpretation in the forcing extension.  That is, ``$M$ is $<\mu$-closed potentially $\mathcal{C}$-filtered" officially means that there is a $<\mu$-closed poset $\mathbb{P}$ forcing ``$\check{M}$ is $\check{\mathcal{C}}$-filtered" (where $\mathcal{C}$ is a class in the ground model).  However, in some cases the interpretations will coincide, e.g.\ when $\mathcal{C}$ is the collection of countably-generated or countably presented projective modules; see Corollary \ref{cor_SameClasses} below.
\end{remark}

\begin{lemma}\label{lem_SigmaClosedLengthOmega1}
Let $\mu$ be regular uncountable and $\mathcal{C}$ be a collection of $<\mu$-presented modules.  If $M$ is $<\mu$-closed potentially $\mathcal{C}$-filtered, then there is a $<\mu$-closed forcing extension of $V$ in which $|M|\le \mu$ and there is a $\mathcal{C}$-filtration of $M$ of length at most $\mu$.
\end{lemma}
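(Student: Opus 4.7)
The plan is a two-step $<\mu$-closed forcing argument: first invoke the hypothesis to obtain a $<\mu$-closed poset $\mathbb{P}$ making $M$ become $\mathcal{C}$-filtered, then further force with a $<\mu$-closed L\'evy collapse to reduce $|M|$ to at most $\mu$, and finally apply Corollary~\ref{cor_Aleph_1_gen_CFilt} in the resulting extension to extract a filtration of length at most $\mu$ whose entries are $<\mu$-presented.

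First, fix a $<\mu$-closed forcing $\mathbb{P}$ witnessing that $M$ is $<\mu$-closed potentially $\mathcal{C}$-filtered. Working in $V^{\mathbb{P}}$, form $\dot{\mathbb{Q}} := \mathrm{Col}(\mu, |M|)$, which is $<\mu$-closed in $V^{\mathbb{P}}$. The two-step iteration $\mathbb{P} \ast \dot{\mathbb{Q}}$ is then $<\mu$-closed in $V$ by a standard fact about iterations of $<\mu$-closed forcings, and in $V^{\mathbb{P} \ast \dot{\mathbb{Q}}}$ the cardinality $|M|$ is at most $\mu$. Moreover, ``$M$ is $\mathcal{C}$-filtered'' is upward absolute between extensions: a witnessing filtration in $V^{\mathbb{P}}$ together with the isomorphisms between its consecutive quotients and elements of the (ground-model) class $\mathcal{C}$ all persist in any further extension. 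Hence in $V^{\mathbb{P} \ast \dot{\mathbb{Q}}}$ the module $M$ is simultaneously $\mathcal{C}$-filtered and of cardinality at most $\mu$.

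Finally, apply Corollary~\ref{cor_Aleph_1_gen_CFilt} in $V^{\mathbb{P} \ast \dot{\mathbb{Q}}}$: when $|M| = \mu$, the hypotheses of that corollary are met verbatim, so the implication \ref{item_C_filtered}~$\Rightarrow$~\ref{item_C_filtered_omega1} yields a $\mathcal{C}$-filtration of length at most $\mu$ whose entries are $<\mu$-presented. The edge case $|M| < \mu$ is handled directly by Hill's Lemma (Lemma~\ref{lem_HillClub}): its family $\mathcal{F}$ of submodules has $\mathcal{C}$-filtered consecutive quotients, so one inductively exhausts $M$ along a $\subseteq$-increasing chain in $\mathcal{F}$ using the cofinality clause of Hill's Lemma and concatenates the resulting short quotient filtrations, exactly as in the proof of Corollary~\ref{cor_Aleph_1_gen_CFilt}, to obtain a $\mathcal{C}$-filtration of length $<\mu$. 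The only substantive technical point is the upward absoluteness of $\mathcal{C}$-filteredness from $V^{\mathbb{P}}$ to $V^{\mathbb{P} \ast \dot{\mathbb{Q}}}$, which is routine because, per the remark preceding the lemma, $\mathcal{C}$ denotes a fixed ground-model class, so any witnessing isomorphism in $V^{\mathbb{P}}$ remains an isomorphism in any further extension.
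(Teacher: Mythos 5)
Your proposal is correct and follows essentially the same route as the paper: pass to a further $<\mu$-closed extension collapsing $|M|$ to at most $\mu$, observe that the composition is $<\mu$-closed and that the $\mathcal{C}$-filtration is upward absolute, then invoke Corollary~\ref{cor_Aleph_1_gen_CFilt} to shorten the filtration. Your explicit treatment of the edge case $|M|<\mu$ is a minor extra care the paper glosses over (one can also just pad a generating set to size $\mu$), but otherwise the two arguments coincide.
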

\begin{proof}
Let $W$ be a $<\mu$-closed extension of the ground model $V$ where $M$ is $\mathcal{C}$-filtered.  Let $W'$ be a further $<\mu$-closed forcing extension of $W$ where $|M|\le \mu$.  Clearly the $\mathcal{C}$-filtration of $M$ is upward absolute to $W'$, and every member of $\mathcal{C}$ is $<\mu$-presented in $W'$. Note that $W'$ is a $<\mu$-closed forcing extension of $V$.  And by Corollary \ref{cor_Aleph_1_gen_CFilt}, $W'$ has a $\mathcal{C}$-filtration of $M$ of length at most $\mu$.  
\end{proof}


\begin{lemma}\label{lem_CtblePresSigmaClosed}
Suppose $V \subset W$ are transitive ZFC models, $\mu$ is a cardinal in $V$, $V$ is $\mu$-closed in $W$ (i.e., $W \cap {}^\mu V \subset V$), and $R$ is a ring in $V$.  Then:
\begin{enumerate}
 \item\label{item_MuGenSubmod} For every fixed $R$-module $M$ in $V$, every $\mu$-generated submodule of $M$ in $W$ is also in $V$.
 \item\label{item_MuGenRelations} Every $R$-module in $W$ whose relations are $\mu$-generated is (isomorphic to) an element of $V$.
 \item\label{item_MuPresented} Every $\mu$-presented $R$-module in $W$ is (isomorphic to) an $R$-module in $V$ that is $\mu$-presented in $V$.
\end{enumerate}

\end{lemma}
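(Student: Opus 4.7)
The plan is to prove part \eqref{item_MuGenSubmod} directly from the $\mu$-closure hypothesis, and then deduce \eqref{item_MuGenRelations} and \eqref{item_MuPresented} by realizing each module in $W$ as a quotient $F/K$ where the free module $F$ is literally an element of $V$ and $K$ is a $\mu$-generated submodule of $F$.  The single technical device used throughout is that a generating set of size at most $\mu$ in $W$ can always be coded, by padding with a fixed element, as a function $f:\mu\to V$; such an $f$ lies in $W\cap{}^{\mu}V$, and hence in $V$ by hypothesis.

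For part \eqref{item_MuGenSubmod}, let $N\in W$ be a $\mu$-generated submodule of $M$, and fix in $W$ a generating set $X\subseteq M$ of $N$ with $|X|^W\le\mu$, together with a surjection $f:\mu\to X\cup\{0_M\}$.  Since $M\in V$, we have $f\in W\cap{}^{\mu}V\subseteq V$, so the range of $f$ is an element of $V$.  Hence $N=\langle\text{range}(f)\rangle^M_R\in V$, and moreover $f$ witnesses that $N$ is $\mu$-generated in $V$ as well; this last fact will be needed in \eqref{item_MuPresented}.

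For parts \eqref{item_MuGenRelations} and \eqref{item_MuPresented}, let $M'\in W$ have a $\mu$-generated relation module, witnessed in $W$ by a surjection $\pi:R^{(I)}\to M'$ with $\ker\pi$ $\mu$-generated inside $R^{(I)}$.  Composing with a $W$-bijection between $I$ and some ordinal, we may assume $I=\alpha$ is an ordinal; for \eqref{item_MuPresented} we moreover arrange $\alpha\le\mu$ using $\mu$-generation of $M'$.  Since ordinals belong to $V$, the free module $F:=R^{(\alpha)}$ is in $V$; applying part \eqref{item_MuGenSubmod} inside $F$ places $\ker\pi$ in $V$ as a $\mu$-generated $V$-submodule.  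Consequently $F/\ker\pi$ is an $R$-module in $V$, and $\pi$ descends in $W$ to an isomorphism $F/\ker\pi\cong M'$.  For \eqref{item_MuPresented}, $F$ is $\mu$-generated in $V$ as well (since $\mu$ is a $V$-cardinal and $\alpha\le\mu$), which together with the $V$-$\mu$-generation of $\ker\pi$ yields a genuine $\mu$-presentation of $F/\ker\pi$ in $V$.

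The main bookkeeping point---the closest thing to an obstacle---is to insist on an \emph{ordinal}-indexed free module so that $F$ is literally in $V$ rather than merely isomorphic to something there, and to track cardinalities from the ground model's point of view rather than $W$'s.  Both are immediate from the padding-coding trick of \eqref{item_MuGenSubmod}, which transfers ``$\mu$-generated'' from $W$ to $V$ along the closure hypothesis; beyond that, no further use of the closure assumption is required.
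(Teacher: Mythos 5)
Your proposal is correct and follows essentially the same route as the paper: part \eqref{item_MuGenSubmod} by pulling the generating set into $V$ via the closure hypothesis, and parts \eqref{item_MuGenRelations} and \eqref{item_MuPresented} by presenting the module as $F/K$ with $F$ a free module literally in $V$ and $K$ a $\mu$-generated submodule caught by part \eqref{item_MuGenSubmod}. You merely make explicit two bookkeeping points the paper leaves implicit (padding a $\le\mu$-sized set to a genuine $\mu$-sequence, and indexing $F$ by an ordinal so it lies in $V$ rather than only up to isomorphism), which is fine.
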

\begin{proof}
For part \eqref{item_MuGenSubmod}, if $Z \in W$ is a $\mu$-sized subset of $M$, then $Z \in V$ by $\mu$-closure of $V$ in $W$, and hence $\langle Z \rangle^M_R$ is an element of $V$, since $Z$, $M$, and $R$ are in $V$.

For part \eqref{item_MuGenRelations}:  in $W$, suppose $N \simeq F/K$ where $F$ is free and $K$ is a $\mu$-generated submodule of $F$; say $K = \langle Z \rangle^F_R$ where $Z$ is (in $W$) a $\mu$-sized subset of $F$.  Note that $F \in V$, since the ring $R$ is in $V$ and $F$ is a direct sum of copies of $R$; then $Z \in V$ too, by the $\mu$-closure of $V$ in $W$.  Hence $K = \langle Z \rangle^F_R \in V$, and therefore $F/K \in V$.

For part \eqref{item_MuPresented}:  if the $F$ from the previous paragraph was also $\mu$-generated in $W$, then by the $\mu$-closure of $V$ in $W$, $F$ is also $\mu$-generated in $V$.  Hence the quotient $F/K$ is a $\mu$-presentation from the point of view of $V$ too.
\end{proof}

Our focus will often be on the case $\mu = \omega_1$, in which case we will say ``$\sigma$-closed potentially $\mathcal{C}$-filtered" instead of ``$<\omega_1$-closed potentially $\mathcal{C}$-filtered" (and similarly $\sigma$-closed instead of $<\omega_1$ closed forcing).

\begin{corollary}\label{cor_SameClasses}
For any ring $R$, and any $R$-module $M$, the following are equivalent:
\begin{enumerate}
 \item\label{item_PP} $M$ is $\sigma$-closed potentially projective;
 

 \item\label{item_Cg_filt} $M$ is $\sigma$-closed potentially $\mathcal{C}$-filtered, where $\mathcal{C}$ is the collection of all countably generated, projective modules.\footnote{Which by Fact \ref{fact_ProjCGCP} is the same as the collection of all countably presented, projective modules.}
\end{enumerate}

\end{corollary}

\begin{proof}
For \eqref{item_PP} $\implies$ \eqref{item_Cg_filt}: suppose $M \in V$ is an $R$-module and $W$ is a $\sigma$-closed forcing extension of $V$ where $M$ becomes projective.  Then by Kaplansky's Theorem (in $W$), $M$ is $\mathcal{C}^W$-filtered, where $\mathcal{C}^W$ is the collection of countably-generated projective modules in $W$.  But every element of $\mathcal{C}^W$ is an element of $V$ by Lemma \ref{lem_CtblePresSigmaClosed}, and is projective in $V$ by Lemma \ref{lem_Proj_Abs}.  Hence, from the point of view of $W$, $\mathcal{C}^W  \subseteq \mathcal{C}^V$ (in fact they are equal), and so $W \models$ ``$M$ is $\mathcal{C}^V$-filtered".

For the converse, suppose $M$ is an $R$-module in $V$, and $W$ is a $\sigma$-closed forcing extension in which $M$ becomes $\mathcal{C}^V$-filtered.  Since projectivity is upward absolute, each module in $\mathcal{C}^V$ is projective in $W$, and hence $W$ sees that $M$ is a transfinite extension of projective modules, and is hence projective.
\end{proof}

An important fact about $\sigma$-closed forcing is:
\begin{fact}[Jech~\cite{MR1940513}]\label{fact_SigmaClosedPreserveStat}
If $\mathbb{P}$ is $\sigma$-closed, then for all uncountable $X$ in the ground model:
\begin{itemize}
 \item Closed unbounded subsets of $\wp_{\omega_1}(X)$ in the ground model remain closed unbounded in $V^{\mathbb{P}}$;\footnote{This is true for larger $\mu$ too, simply because $<\mu$-closed forcings add no new sets of size less than $\mu$.}
 \item Stationary subsets of $\wp_{\omega_1}(X)$ in the ground model remain stationary in $V^{\mathbb{P}}$.
\end{itemize}
\end{fact}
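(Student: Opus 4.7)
The plan is to prove the two bullets separately, treating clubness first (which is essentially automatic for $\sigma$-closed forcing) and then stationarity (which requires an application of Kueker's characterization together with a fusion using $\sigma$-closure).

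For the first bullet, I would first observe that $\sigma$-closed forcing adds no new countable sequences of ground-model objects, so $\wp_{\omega_1}(X)^V=\wp_{\omega_1}(X)^{V^{\mathbb{P}}}$. Given $C$ club in $\wp_{\omega_1}(X)$ in $V$, $\subseteq$-cofinality is preserved because every element of $\wp_{\omega_1}(X)^{V^{\mathbb{P}}}$ already lies in $V$ and can be extended inside $V$ to an element of $C$; closure under countable $\subseteq$-increasing chains is preserved because any such chain $\langle Z_n : n<\omega\rangle \in V^{\mathbb{P}}$ lies in $V$ by $\sigma$-closure, so its union remains in $C$.

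For the second bullet, let $S\in V$ be stationary in $\wp_{\omega_1}(X)$, and suppose for contradiction some $p\in\mathbb{P}$ forces that $\dot{C}$ is a club in $\wp_{\omega_1}(X)^{V^{\mathbb{P}}}$ with $\dot{C}\cap S = \emptyset$. Without loss of generality, $X\supseteq \omega_1$ (pick a bijection in $V$). By Lemma \ref{lem_Kueker} applied in the extension, we may pick a $\mathbb{P}$-name $\dot{F}$ such that $p\Vdash \dot{F}:[X]^{<\omega}\to X$ and $C_{\dot{F},\omega_1,X}\subseteq \dot{C}$. Fix $\theta$ regular with $\mathbb{P},\dot{F},p,S,X\in H_\theta$, and choose a countable elementary submodel $W\prec (H_\theta,\in)$ containing all of these parameters and such that $W\cap X\in S$; this is possible by stationarity of $S$. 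Enumerate $[W\cap X]^{<\omega}$ as $\{s_n : n<\omega\}$ in $V$. Working inside $W$, recursively build a descending chain $p = p_{-1}\geq p_0\geq p_1\geq \cdots$ in $W\cap \mathbb{P}$ such that $p_n$ decides the value $\dot{F}(s_n)$; each $p_n$ exists in $W$ by elementarity. By $\sigma$-closure of $\mathbb{P}$, pick $q\in\mathbb{P}$ below every $p_n$ (this $q$ need not be in $W$, but that's harmless). For each $n$, the value $\dot{F}(s_n)$ forced by $p_n$ is an element of $W\cap X$ by elementarity of $W$. Hence $q\Vdash W\cap X \text{ is closed under } \dot{F}$, so $q\Vdash W\cap X \in C_{\dot{F},\omega_1,X}\subseteq \dot{C}$. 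But $W\cap X\in S$, contradicting $p\Vdash \dot{C}\cap S = \emptyset$.

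The main technical obstacle is verifying that the decisions $\dot{F}(s_n)$ made by conditions inside $W$ actually land in $W\cap X$; this is precisely where elementarity of $W$ (the statement ``there exist $p'\leq p_{n-1}$ and $y\in X$ with $p'\Vdash \dot{F}(s_n)=y$" is reflected by $W$) plus the fact that $s_n\in W$ are used. A minor subsidiary point is that for $\mu=\omega_1$ one does not need the ``$W\cap \mu$ is an ordinal" clause in Kueker's lemma, so no additional care is required in choosing $W$; this is precisely the footnote after Lemma \ref{lem_Kueker} in the preliminaries.
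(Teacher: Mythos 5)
Your proof is correct. The paper offers no proof of this Fact---it simply cites Jech---and your argument (absoluteness of $\wp_{\omega_1}(X)$ under $\sigma$-closed forcing for the club clause; Kueker's characterization via Lemma \ref{lem_Kueker} together with a fusion of deciding conditions inside a countable $W \prec H_\theta$ with $W \cap X \in S$ for the stationarity clause) is precisely the standard proof given in the cited reference, with the key elementarity step (that the decided values of $\dot{F}(s_n)$ land in $W \cap X$) handled correctly.
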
 

The second bullet is specific to $\mu = \omega_1$; in general, $<\omega_2$-closed forcing can kill the stationarity of some subset of $\wp_{\omega_2}(X)$.  The next lemma is similar to the $\mu = \omega_1$ instance of Corollary \ref{cor_Aleph_1_gen_CFilt}.  The difference is that Corollary \ref{cor_Aleph_1_gen_CFilt} assumed $M$ was $\aleph_1$-generated, while in Corollary \ref{cor_StatLogicChar_SigmaPotCfilter}, there is no cardinality restriction on $M$.

\begin{corollary}\label{cor_StatLogicChar_SigmaPotCfilter}
Let $\mathcal{C}$ be a collection of countably presented modules, and $M$ be a module.  The following are equivalent:
\begin{enumerate}
 \item\label{item_M_SigmaPotCFiltered} $M$ is $\sigma$-closed potentially $\mathcal{C}$-filtered;
 
 \item In some $\sigma$-closed forcing extension, there is a $\mathcal{C}$-filtration of $M$ of (ordinal) length at most $\omega_1$; 
 \item\label{item_aa_aa_NoSizeRest} There is a closed unbounded $D \subseteq \wp_{\omega_1}(M)$ such that whenever $Z \subseteq Z'$ are both in $D$, $\langle Z' \rangle / \langle Z \rangle$ is $\mathcal{C}$-filtered, via a filtration of length less than $\omega_1$.
 \item\label{item_aa_stat_NoSizeRest}  there is a closed unbounded set $D \subseteq \wp_{\omega_1}(M)$ such that for every $Z \in D$, 
\begin{equation*}
\begin{split}
S_Z:= \{ Z' \in D \ : \ & Z \subset Z' \text{ and } \langle Z' \rangle / \langle Z \rangle \text{ is } \mathcal{C} \text{-filtered} \\ 
& \text{via a filtration of length $<\omega_1$} \}
\end{split}
\end{equation*}
is stationary in $\wp_{\omega_1}(M)$.

\end{enumerate}
\end{corollary}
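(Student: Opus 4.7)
The plan is to close the implication loop $(1) \Leftrightarrow (2) \Rightarrow (3) \Rightarrow (4) \Rightarrow (1)$. The equivalence $(1) \Leftrightarrow (2)$ is immediate from Lemma \ref{lem_SigmaClosedLengthOmega1}, and $(3) \Rightarrow (4)$ is trivial since, given the club $D$ from (3), the set $S_Z$ contains $D \cap \{Z' : Z \subsetneq Z'\}$, the intersection of two clubs, hence is itself a club and in particular stationary.

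For $(4) \Rightarrow (1)$, force with the $\sigma$-closed poset $\mathbb{P} := \text{Col}(\omega_1, M)$. By Fact \ref{fact_SigmaClosedPreserveStat}, the club $D$ and each stationary set $S_Z$ from $V$ remain club, resp.\ stationary, in $V^{\mathbb{P}}$. In $V^{\mathbb{P}}$ we have $|M| \le \aleph_1$; fix an enumeration $\{x_\alpha : \alpha < \omega_1\}$ of $M$ and recursively build a $\subseteq$-increasing, $\subseteq$-continuous chain $\langle Z_\alpha : \alpha < \omega_1 \rangle$ with $Z_0 \in D$, $Z_{\alpha+1} \in S_{Z_\alpha}$ chosen to contain $x_\alpha$ (possible because $S_{Z_\alpha}$ is stationary and $\{Z' : x_\alpha \in Z'\}$ contains a club), and $Z_\lambda = \bigcup_{\alpha<\lambda} Z_\alpha \in D$ at limits by closure of $D$. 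Then the consecutive quotients of the induced filtration $\langle \langle Z_\alpha \rangle : \alpha < \omega_1 \rangle$ of $M$ are each $\mathcal{C}$-filtered by a countable filtration (by definition of $S_{Z_\alpha}$); concatenating these yields a $\mathcal{C}$-filtration of $M$ in $V^{\mathbb{P}}$.

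The main step is $(1) \Rightarrow (3)$. Let $\mathbb{P}$ be a $\sigma$-closed poset that forces $M$ to be $\mathcal{C}$-filtered. By Lemma \ref{lem_HillClub} applied in $V^{\mathbb{P}}$, let $\dot{D}^*$ be a $\mathbb{P}$-name forced to denote a club in $\wp_{\omega_1}(M)$ whose pairs satisfy the quotient property. Define in $V$
\[
D := \{Z \in \wp_{\omega_1}(M) : \exists p \in \mathbb{P}, \ p \Vdash \check{Z} \in \dot{D}^*\}.
\]
For cofinality, given $Z_0 \in \wp_{\omega_1}^V(M)$, the Maximum Principle yields a name $\dot{Z}$ and condition $p_0$ with $p_0 \Vdash \dot{Z} \supseteq \check{Z}_0 \wedge \dot{Z} \in \dot{D}^*$; since $\dot{Z}$ names a countable set, $\sigma$-closure lets us refine $p_0$ to some $p$ that decides $\dot{Z}$ to equal $\check{Z}$ for an explicit $Z \in V$, placing $Z \supseteq Z_0$ in $D$. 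For closure, given an increasing sequence $\langle Z_n \rangle$ in $D$ with witnesses $p_n$, $\sigma$-closure produces $p^* \le p_n$ for all $n$; then $p^*$ forces every $\check{Z}_n \in \dot{D}^*$, and closure of $\dot{D}^*$ in $V^{\mathbb{P}}$ gives $p^* \Vdash \bigcup_n \check{Z}_n \in \dot{D}^*$, so $\bigcup_n Z_n \in D$. For the quotient property, given $Z \subseteq Z'$ in $D$ with witnesses $p, p'$, a common refinement $p^* \le p, p'$ forces $\langle Z' \rangle / \langle Z \rangle$ to be $\mathcal{C}$-filtered via a countable filtration; since this quotient is a countably generated $V$-module and any such filtration is a countable sequence of countable submodules (together with countable isomorphisms onto members of $\mathcal{C}$), $\sigma$-closure places every piece in $V$, so the property descends from $V^{\mathbb{P}}$ to $V$. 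The main obstacle is precisely this last absoluteness step: the whole argument relies on the fact that $\sigma$-closed forcing adds no countable subsets of ground-model sets, which ensures that countable $\mathcal{C}$-filtrations of countably generated ground-model modules are absolute between $V$ and $V^{\mathbb{P}}$.
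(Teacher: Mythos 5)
Your handling of $(1)\Leftrightarrow(2)$, $(3)\Rightarrow(4)$, and $(4)\Rightarrow(1)$ is correct and is essentially the paper's route: pass to a $\sigma$-closed extension in which $|M|\le\aleph_1$, use Fact \ref{fact_SigmaClosedPreserveStat} to push the club $D$ and the stationary sets $S_Z$ upward, and run the construction from Corollary \ref{cor_Aleph_1_gen_CFilt} there. The gap is in $(1)\Rightarrow(3)$. Having defined $D:=\{Z:\exists p\ p\Vdash \check Z\in\dot D^*\}$, you twice invoke a lower bound for the witnessing conditions: for closure you take ``$p^*\le p_n$ for all $n$,'' and for the pair property you take ``a common refinement $p^*\le p,p'$.'' But these witnesses were chosen independently for each $Z$, so there is no reason they form a descending sequence or are even pairwise compatible; $\sigma$-closure supplies lower bounds only for descending $\omega$-sequences, not for arbitrary countable families of conditions. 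If $p\perp p'$, no condition forces both $\check Z$ and $\check Z'$ into $\dot D^*$, and then the absoluteness step you correctly identify (Lemma \ref{lem_AbsCfilt}) never gets off the ground, since it needs a single condition forcing the quotient to be $\mathcal C$-filtered. For the same reason your $D$ need not be closed.

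What your setup does yield, after a small repair, is $(1)\Rightarrow(4)$ rather than $(1)\Rightarrow(3)$: the complement of your $D$ is nonstationary in $V$ (were it stationary, by Fact \ref{fact_SigmaClosedPreserveStat} it would remain stationary and would have to meet $\dot D^*$), so $D$ contains a club $D'$; and for a fixed $Z\in D'$ with witness $p_Z$, a density argument below $p_Z$ produces, inside any ground-model club $E$, some $Z'\supsetneq Z$ in $E\cap D'$ together with a single $q\le p_Z$ forcing both $\check Z$ and $\check Z'$ into $\dot D^*$ --- now Lemma \ref{lem_AbsCfilt} does apply and $S_Z$ is stationary. That closes the cycle through (1), (2), (4) and gives $(3)\Rightarrow(4)$, but it leaves $(1)\Rightarrow(3)$ (equivalently $(4)\Rightarrow(3)$) unproven: ``every pair from some club is good'' is on its face strictly stronger than ``each member of a club has stationarily many good extensions,'' and manufacturing a single club in $V$ all of whose pairs behave requires more than ``some condition forces membership.'' You should either supply a genuine argument for this direction or scale the claim back to the equivalence of (1), (2), (4) plus $(3)\Rightarrow(4)$, which is in fact the only part of the corollary the paper later uses.
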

\begin{proof}
This follows directly from Corollary \ref{cor_Aleph_1_gen_CFilt} and Fact \ref{fact_SigmaClosedPreserveStat}.

\end{proof}

\section{Filtration Games}\label{sec_FiltGames}

In Section \ref{sec_FiltGameDef} we introduce the notion of a $\mathcal{C}$-Filtration Game, played on a single module,  and prove some basic facts about these games, including their connection to potentially $\mathcal{C}$-filtered modules.  Section \ref{subsec_WS1AEC} shows that, in many cases, the class of modules for which Player 1 has a winning strategy in a Filtration Game constitutes an AEC.  Section \ref{subsec_DualBasisGame} introduces the Dual Basis Game on a module, which gives an alternate (and somewhat more concrete) description of the $\mathcal{C}$-Filtration Game in the case where $\mathcal{C}$ is the collection of countably generated, projective modules.  Section \ref{SubSec_DB_EF} shows that determinacy of Filtration Games implies determinacy of certain Ehrenfeucht-Fra\"iss\'{e} games.

\subsection{Filtration games and potential filtrations}\label{sec_FiltGameDef}

\begin{definition}\label{def_FiltGame}
Let $\mu$ be a regular (not necessarily uncountable) cardinal, $\mathcal{C}$ be a collection of $R$-modules, and $M$ an $R$-module.  The \textbf{$\boldsymbol{\mathcal{C}}$-filtration game of length $\boldsymbol{\mu}$ on $\boldsymbol{M}$}, denoted $\boldsymbol{\mathcal{G}_{\mu}^{\textbf{Filt}(\mathcal{C})}(M)}$, is a game between two players with at most $\mu$ innings, as follows:
\begin{center}
\begin{tabular}{|c|c|c|c|c|c|}
\hline 
Player 1 & $X_0$ & $X_1$ &  \dots & $X_i$ & \dots \\ 
\hline 
Player 2 & $Z_0$ & $Z_1$ &  \dots & $Z_i$ & \dots \\ 
\hline 
\end{tabular} 
\end{center}
At the beginning of round $i$ (where $i < \mu$), Player 1 plays an $X_i \subseteq M$ of size $<\mu$, and Player 2 responds with a $Z_i \subseteq M$ of size $<\mu$ such that 
\[
X_i \cup \bigcup_{k < i} Z_k \subseteq \langle Z_i \rangle
\]
and
\begin{equation}\label{eq_FilterReq}
\frac{\langle Z_i \rangle}{\left\langle \bigcup_{k < i } Z_k \right\rangle } \text{ is } \mathcal{C} \text{-filtered}. \tag{+}
\end{equation}

\noindent Player 2 wins if she lasts $\mu$ rounds; otherwise Player 1 wins.
\end{definition}

Note that for successor ordinals $i$, \eqref{eq_FilterReq} just means that $\langle Z_i \rangle / \langle Z_{i_0} \rangle$ is $\mathcal{C}$-filtered, where $i = i_0 + 1$.  If $\mathcal{C}$ is closed under transfinite extensions of length $<\mu$, then the game from Definition \ref{def_FiltGame} is the same as if we replaced requirement \eqref{eq_FilterReq} with the apparently stronger requirement that
\begin{equation}
\frac{\langle Z_i \rangle}{\left\langle \bigcup_{k < i } Z_k \right\rangle } \text{ is (isomorphic to an element) in }  \mathcal{C}. \tag{++}
\end{equation}

\begin{remark}\label{rem_ProjFiltGame}
When $\mathcal{C}$ is the collection of all $<\mu$-presented, projective modules, we may sometimes refer to the game $\mathcal{G}^{\text{Filt}(\mathcal{C})}_\mu(M)$ as the \textbf{Projective Filtration game of length $\boldsymbol{\mu}$ on $\boldsymbol{M}$}.  When $\mathcal{C} = \{ R \}$, we may sometimes refer to the game $\mathcal{G}^{\text{Filt}(\mathcal{C})}_\mu(M)$ as the \textbf{Free Filtration game of length $\boldsymbol{\mu}$ on $\boldsymbol{M}$}. 
\end{remark}

The following lemma dispenses with a technicality.  Recall that the definition of ``$\mathcal{C}$-filtered" requires successive factors in the filtration to be ``merely" \emph{isomorphic to} an element of $\mathcal{C}$, not necessarily \emph{in} $\mathcal{C}$; this is the convention, since it is convenient to sometimes work with $\mathcal{C}$'s that are not closed under isomorphism.  However, even if one did require $\mathcal{C}$ to be closed under isomorphism, one would still run into trouble again when going to a forcing extension of the universe, which may add new modules and cause $\mathcal{C}$ to no longer be closed under isomorphisms.  The following lemma says this isn't an issue in the situations we will encounter:

\begin{lemma}\label{lem_AbsCfilt}
Suppose $\mathcal{C}$ is a collection of $<\mu$-generated modules in the ground model $V$, $N \in V$ is a module, and $W$ is a $<\mu$-closed forcing extension of $V$.  Suppose $\vec{U} = \langle U_\xi \ : \ \xi < \zeta \rangle$ is, in $W$, a $\mathcal{C}$-filtration of $N$, and that $\zeta < \mu$.  Then $\vec{U}$ is an element of $V$, and $V \models$ ``$\vec{U}$ is a $\mathcal{C}$-filtration of $N$".
\end{lemma}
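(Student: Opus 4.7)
The plan is to prove by transfinite induction on $\xi \le \zeta$ that $U_\xi \in V$; since $\zeta < \mu$, the $<\mu$-closure of $V$ in $W$ will then place the sequence $\vec{U}$ itself in $V$. The base case $U_0 = \{0\}$ is forced by the definition of a $\mathcal{C}$-filtration. For the limit case, if $\xi < \zeta$ is a limit and $U_\eta \in V$ for every $\eta < \xi$, then $\langle U_\eta \ : \ \eta < \xi \rangle$ is a $<\mu$-length sequence of $V$-elements, hence lies in $V$ by $<\mu$-closure, as does $U_\xi = \bigcup_{\eta < \xi} U_\eta$.

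The successor step is the substantive point. Assume $U_\xi \in V$. In $W$, fix an isomorphism $\phi_\xi: C_\xi \to U_{\xi+1}/U_\xi$ with $C_\xi \in \mathcal{C}$; because $\mathcal{C} \in V$ and $C_\xi$ is $<\mu$-generated in $V$, I can fix in $V$ a generating set $X_\xi \subseteq C_\xi$ with $|X_\xi| < \mu$. For each $x \in X_\xi$, pick in $W$ a lift $y_x \in U_{\xi+1}$ of $\phi_\xi(x) \in U_{\xi+1}/U_\xi$; each $y_x$ belongs to the transitive set $N \in V$ and so to $V$, and the $<\mu$-length sequence $\langle y_x \ : \ x \in X_\xi \rangle$ is therefore in $V$ by $<\mu$-closure. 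Writing $Y_\xi := \{ y_x \ : \ x \in X_\xi \}$, I get $Y_\xi \in V$ and $U_{\xi+1} = \langle U_\xi \cup Y_\xi \rangle^N_R \in V$, since $\phi_\xi[X_\xi]$ generates $U_{\xi+1}/U_\xi$ modulo $U_\xi$.

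It remains to check that $V$ recognizes $\vec U$ as a $\mathcal{C}$-filtration of $N$. The structural filtration axioms (increasing, continuous at limits, union equal to $N$) are absolute between $V$ and $W$. For the consecutive-quotient condition, the function $\phi_\xi \restriction X_\xi$ is a $<\mu$-sized set of pairs from $V$ and hence lies in $V$; defining $\widetilde\phi_\xi: C_\xi \to U_{\xi+1}/U_\xi$ in $V$ by $R$-linear extension along the generators $X_\xi$, well-definedness reduces to statements about $V$-elements already witnessed in $W$ via $\phi_\xi$, and since $R$-linear maps agreeing on $X_\xi$ agree everywhere, $\widetilde\phi_\xi = \phi_\xi$. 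Thus $\phi_\xi \in V$ is an isomorphism from $C_\xi \in \mathcal{C}$ onto $U_{\xi+1}/U_\xi$ in $V$, as required. The main difficulty is the successor step, where fixing the $V$-generating set $X_\xi$ of $C_\xi$ \emph{before} lifting through $\phi_\xi$ is what keeps $Y_\xi$ small enough for $<\mu$-closure to apply; without this, one only has the map $\phi_\xi \in W$, which need not bring the $U_{\xi+1}$ into $V$ on its own.
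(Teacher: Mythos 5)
Your proof is correct and follows essentially the same route as the paper: both arguments use $<\mu$-closure of $V$ in $W$ to pull the ($<\mu$-sized) generating data for each $U_\xi$, and hence the whole $\zeta$-length sequence, into $V$, and both recover the witnessing isomorphisms in $V$ by noting they are determined by their restriction to a $<\mu$-sized generating set. Your explicit transfinite induction with generator-lifting at successor stages just spells out what the paper compresses into the observation that each $U_\xi$ is forced to be a $<\mu$-generated submodule of $N$.
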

\begin{proof}
Let $\langle \dot{U}_\xi \ : \ \xi < \zeta \rangle$ be a name for the sequence.  Since $\mathcal{C}$ consists of $<\mu$-generated modules, and $\zeta < \mu$, it is forced that for all $\xi < \zeta$, $\dot{U}_\xi$  is a $<\mu$-generated submodule of $N$, and hence (by $<\mu$-closure of $V$ in $W$ and because $N \in V$) an element of $V$.  Again by $<\mu$ closure of $V$ in $W$, the entire sequence $\dot{\vec{U}}$ is forced to be in $V$.  Moreover, if $\xi +1 < \zeta$, then $W \models$ ``$U_{\xi+1}/U_\xi$ is isomorphic to an element of $\mathcal{C}$"; let $N_\xi \in \mathcal{C}$ witness this fact, and note that $N_\xi \in V$ because $\mathcal{C} \in V$.  Finally, since $N_\xi$ and $U_{\xi+1}/U_\xi$ are both in $V$, and are both $<\mu$-generated, any isomorphism between them in $W$ is already in $V$ (since it is determined by what it does to the generating set of the domain of size $<\mu$).  So $V \models$ ``$N_\xi$ is isomorphic to $U_{\xi+1}/U_\xi$".
\end{proof}

\begin{lemma}\label{lem_equiv_WS2Filter_Potent}
Let $\mu$ be a regular (not necessarily uncountable) cardinal, $\mathcal{C}$ a collection of $<\mu$-presented modules, and $M$ be a module.  The following are equivalent:
\begin{enumerate}
 \item $M$ is $<\mu$-closed potentially $\mathcal{C}$-filtered.
 \item Player 2 has a winning strategy in the game $\mathcal{G}^{\text{Filt}(\mathcal{C})}_{\mu}(M)$. 
\end{enumerate}
\end{lemma}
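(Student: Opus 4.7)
The plan is to prove both implications, with $(2)\Rightarrow(1)$ being a relatively direct ``play against the strategy inside a collapse extension'' argument, and $(1)\Rightarrow(2)$ requiring the Hill-lemma club structure together with a $<\mu$-closure argument that pulls a strategy back down to $V$.

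For $(2)\Rightarrow(1)$, I would fix a winning strategy $\sigma$ for Player 2 in $V$ and force with any $<\mu$-closed poset $\mathbb{P}$ that collapses $|M|$ to $\mu$ (if $\mu=\omega$, any poset making $M$ countable suffices). In $V^{\mathbb{P}}$, enumerate $M=\{m_\alpha : \alpha<\mu\}$ and simulate a play with Player 1 playing $X_i := \{m_i\}$ and Player 2 responding via $\sigma$. This is legitimate since $<\mu$-closure of $\mathbb{P}$ ensures that no $<\mu$-sized subsets are added, so every $X_i$ and every partial history lies in $V$ and $\sigma$ can be applied inside $V^{\mathbb{P}}$. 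Setting $P_i := \big\langle \bigcup_{k<i} Z_k\big\rangle$ yields a continuous filtration of $M$ whose successor quotients $P_{i+1}/P_i = \langle Z_i\rangle / \big\langle \bigcup_{k<i} Z_k\big\rangle$ are $\mathcal{C}$-filtered in $V$ (by the game rule) and hence, by upward absoluteness of filtrations, in $V^{\mathbb{P}}$. Since Player 1's moves exhaust $M$, splicing these partial $\mathcal{C}$-filtrations produces a $\mathcal{C}$-filtration of $M$ in $V^{\mathbb{P}}$.

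For $(1)\Rightarrow(2)$, let $\mathbb{P}$ be a $<\mu$-closed poset forcing $M$ to be $\mathcal{C}$-filtered. Inside $V^{\mathbb{P}}$ I apply Hill's lemma (Lemma \ref{lem_HillClub}) to obtain a club $D\subseteq\wp_\mu(M)$ such that whenever $Z\subseteq Z'$ are both in $D$, $\langle Z'\rangle/\langle Z\rangle$ is $\mathcal{C}$-filtered via a filtration of length $<\mu$. Let $\dot D$ be a name for $D$. In $V$, fix a well-ordering of a sufficiently large $H_\theta$ and define Player 2's strategy $\sigma$ by recursion, maintaining alongside the game a decreasing sequence of conditions $p_i\in\mathbb{P}$: at round $i$, let $q$ be the canonical lower bound of $\langle p_k : k<i\rangle$ (exists by $<\mu$-closure), let $Y_i := X_i \cup \bigcup_{k<i} Z_k$, and pick the well-ordering-least pair $(p_i,Z_i)$ with $p_i\le q$, $Y_i\subseteq Z_i$, $|Z_i|<\mu$, and $p_i\Vdash \check Z_i\in\dot D$. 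Then set $\sigma\big((X_0,\dots,X_i)\big) := Z_i$.

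To verify the strategy is well-defined and winning: $<\mu$-closure guarantees that every $<\mu$-sized subset of $M$ in $V^{\mathbb{P}}$ already lies in $V$, so $\subseteq$-cofinality of $D$ in $\wp_\mu(M)$ in the extension provides candidate $Z_i\in V$ together with conditions forcing them into $\dot D$, so the recursion never breaks down. The condition $p_i$ forces $Z_k\in\dot D$ for every $k\le i$; since the $Z_k$'s are $\subseteq$-increasing of length $i+1<\mu$, the club property forces $\bigcup_{k<i} Z_k\in\dot D$ as well, and hence the defining property of $D$ forces $\langle Z_i\rangle / \big\langle \bigcup_{k<i} Z_k\big\rangle$ to be $\mathcal{C}$-filtered via a filtration of length $<\mu$. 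By Lemma \ref{lem_AbsCfilt} this short filtration is already in $V$ and certifies the game rule there. The main obstacle I anticipate is the joint recursive bookkeeping for the $p_i$'s and the $Z_i$'s — making $\sigma$ a genuine function of the partial play with all auxiliary choices canonical in $V$ — which is handled by the fixed well-ordering combined with $<\mu$-closure to supply limit lower bounds.
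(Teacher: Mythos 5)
Your argument is correct, and both directions run on the same engine as the paper's proof --- namely that $<\mu$-closure keeps every $<\mu$-sized set and every proper initial segment of a play in the ground model --- but the two constructions differ enough to be worth contrasting. For $(2)\Rightarrow(1)$, the paper forces with the tailor-made poset of $<\mu$-length partial runs of the game in which Player 2 follows $\tau$, ordered by end-extension, and extracts the filtration from a generic run by a density argument; you instead force with an off-the-shelf $<\mu$-closed collapse of $|M|$ to $\mu$ and simulate a single run internally against the ground-model strategy. Your version buys a standard poset at the price of having to justify that the simulation is legitimate, which you do correctly via closure of the initial segments; the paper's version lets genericity do that work. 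For $(1)\Rightarrow(2)$, the paper first passes through Lemma \ref{lem_SigmaClosedLengthOmega1} (itself a consequence of the Hill club) to fix a name for a length-$\le\mu$ filtration with $<\mu$-sized entries, and has Player 2 climb that named filtration while deciding its entries along a descending sequence of conditions; you skip the pre-processing and have Player 2 decide membership in the named Hill club $\dot D$ directly. The two are interchangeable, and yours is marginally more economical. Two small points to tighten: at round $0$ (and more generally whenever $\bigcup_{k<i}Z_k=\emptyset$) you need the clause of Lemma \ref{lem_HillClub} asserting that $\langle Z\rangle$ itself is $\mathcal{C}$-filtered for $Z\in D$, which the form of the club property you quote omits; and your appeal to Lemma \ref{lem_HillClub} presupposes $\mu$ uncountable, though the paper's own route through Lemma \ref{lem_SigmaClosedLengthOmega1} carries the same restriction, so this is not a defect relative to the paper. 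The appeal to Lemma \ref{lem_AbsCfilt} to pull the short witnessing filtrations back to $V$ is exactly as in the paper.
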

\begin{proof}
First suppose $M$ is $<\mu$-closed potentially $\mathcal{C}$-filtered.  By Lemma \ref{lem_SigmaClosedLengthOmega1}, there is a $<\mu$-closed forcing $\mathbb{P}$ that forces the existence of a $\mathcal{C}$-filtration of $M$ of length at most $\mu$. Let $\dot{\vec{Z}}=\Big\langle \langle \dot{Z}_i \rangle \ : \ i < \mu \Big\rangle$ be a name for this filtration, where each $\dot{Z}_i$ is forced to be a set of size $<\mu$.  By $<\mu$-closure of $\mathbb{P}$, every proper initial segment of $\dot{\vec{Z}}$ is forced to be in the ground model, and each $\langle \dot{Z}_{i+1} \rangle / \langle \dot{Z}_i \rangle$ is forced to be $\mathcal{C}$-filtered via a filtration of length $<\mu$.  By Lemma \ref{lem_AbsCfilt}, each such filtration is forced to be in the ground model, and moreover to be a $\mathcal{C}$-filtration there.  Then Player 2 can use the name $\dot{\vec{Z}}$ to obtain a winning strategy in $\mathcal{G}^{\text{Filt}(\mathcal{C})}_{\mu}(M)$, as follows:  suppose $\xi < \mu$ and the game so far looks like the following:

\begin{center}
\begin{tabular}{|c|c|c|c|c|c|c|}
\hline 
Player 1 & $X_0$ & $X_1$ &  \dots & $X_\zeta$ & \dots & $X_\xi$ \\ 
\hline 
Player 2 & $U_{0}$ & $U_{1}$ &  \dots & $U_{\zeta}$ & \dots & \\ 
\hline 
\end{tabular} 
\end{center}

\noindent Suppose also that Player 2 has created a descending sequence $\langle p_\zeta \ : \ \zeta < \xi \rangle$ of conditions in $\mathbb{P}$, and a strictly increasing sequence $\langle i_\zeta \ : \ \zeta < \xi \rangle$ of ordinals below $\mu$, such that for all $\zeta < \xi$,
\[
p_\zeta \Vdash \  \check{U}_\zeta = \dot{Z}_{i_\zeta} \text{ and } \langle \dot{Z}_{i_\zeta} \rangle \supseteq \check{X}_\zeta 
\]

\noindent By $<\mu$-closure of $\mathbb{P}$ there is a lower bound $p^*_\xi$ of the sequence $\langle p_\zeta \ : \ \zeta < \xi \rangle$.  Now $p^*_\xi$ forces ``there is some $i < \omega_1$ such that $\langle \dot{Z}_{i} \rangle \supseteq \check{X}_\xi$, $\dot{Z}_i$ is in the ground model, and $i > i_\zeta$ for all $\zeta < \check{\xi}$".   Pick some $p_\xi \le p^*_\xi$ that decides the value of such an $i$ and also decides a ground model set $Z$ for $\dot{Z}_i$. Player 2 then responds to $X_\xi$ with the set $Z$.  It is routine to check that this procedure yields a valid response at  any round before $\mu$, and hence Player 2 wins.

For the $\Leftarrow$ direction, suppose $\tau$ is a winning strategy for Player 2 in the game.  Define a poset $\mathbb{Q}$ where conditions are $<\mu$-length partial runs of the game where Player 2 has used $\tau$ along the way, ordered by end-extension.  Clearly $\mathbb{Q}$ is $<\mu$-closed.  Furthermore, an easy density argument ensures that for every $x \in M$ and every $q \in \mathbb{Q}$, there is a $q'$ extending $q$ such that $x$ is in one of the sets played by Player 1 at some time in the partial run $q'$.  It follows that if $G$ is generic for $\mathbb{Q}$ and $\vec{Z}_G$ is the union of Player 2's moves, $\vec{Z}_G$ is a $\mathcal{C}$-filtration of $M$ in $V[G]$.

\end{proof}

\begin{lemma}
Suppose $\mu$ is a regular cardinal, $\mathcal{C}$ is a collection of $<\mu$-presented modules, and $M$ is a $\mu$-generated module.  The following are equivalent:
\begin{enumerate}
 \item\label{item_ActuallyCF} $M$ is $\mathcal{C}$-filtered.

 \item\label{item_LessMuPCF} $M$ is $<\mu$-closed potentially $\mathcal{C}$-filtered.

 \item\label{item_WS_MuGen} Player 2 has a winning strategy in $\mathcal{G}^{\text{Filt}(\mathcal{C})}_\mu(M)$.
\end{enumerate}
\end{lemma}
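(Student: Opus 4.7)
The equivalence of (2) and (3) follows immediately from Lemma \ref{lem_equiv_WS2Filter_Potent}, applied with the regular cardinal $\mu$ and the collection $\mathcal{C}$, so the plan is to establish (1) $\iff$ (3). The direction (1) $\implies$ (3) is essentially free: (1) $\implies$ (2) holds by taking the trivial forcing as the $<\mu$-closed witness, and then (2) $\implies$ (3) by the cited lemma.

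For the substantive direction (3) $\implies$ (1), the plan is to exploit that $M$ is $\mu$-generated and play against Player 2's winning strategy using an enumeration of a generating set. Concretely, fix in $V$ a generating set $\{ x_\alpha : \alpha < \mu \}$ of $M$ and let $\tau$ be Player 2's winning strategy in $\mathcal{G}^{\text{Filt}(\mathcal{C})}_\mu(M)$. I would then play the game in $V$ by having Player 1 play $X_\alpha := \{ x_\alpha \}$ at round $\alpha$ and letting $Z_\alpha$ be the response dictated by $\tau$. Because $\tau$ is winning, all $\mu$ rounds are played, producing a sequence $\langle Z_\alpha : \alpha < \mu\rangle \in V$. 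Setting $M_\alpha := \langle \bigcup_{\beta < \alpha} Z_\beta \rangle$ for $\alpha \le \mu$, the rules of the game together with regularity of $\mu$ make $\langle M_\alpha : \alpha \le \mu\rangle$ a $\subseteq$-increasing, continuous chain with $M_0 = \{0\}$ and $M_\mu = M$ (since each $x_\alpha$ lies in $\langle Z_\alpha\rangle \subseteq M_{\alpha+1}$), and with each successive quotient $M_{\alpha+1}/M_\alpha = \langle Z_\alpha\rangle / \langle \bigcup_{\beta < \alpha} Z_\beta\rangle$ being $\mathcal{C}$-filtered (this last identity uses that the game rules force $\bigcup_{\beta < \alpha} Z_\beta \subseteq \langle Z_\alpha\rangle$).

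It then remains to concatenate: for each $\alpha < \mu$, fix a $\mathcal{C}$-filtration of $M_{\alpha+1}/M_\alpha$, lift it to a chain inside $M_{\alpha+1}$ running from $M_\alpha$ up to $M_{\alpha+1}$, and splice these lifted chains together in ordinal-sum order across $\alpha < \mu$. This mirrors exactly the concatenation step already carried out in the \eqref{item_C_filtered} $\implies$ \eqref{item_C_filtered_omega1} direction of the proof of Corollary \ref{cor_Aleph_1_gen_CFilt}. The only bookkeeping to verify is that consecutive quotients of the concatenated chain are (isomorphic to elements) in $\mathcal{C}$ both at internal successor steps and at each transition from the $\alpha$-th sub-chain to the $(\alpha+1)$-st; the latter is immediate because the lifted $\alpha$-th sub-chain has union exactly $M_{\alpha+1}$, which is the starting point of the $(\alpha+1)$-st. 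I expect this indexing bookkeeping, rather than any conceptual step, to be the only obstacle.
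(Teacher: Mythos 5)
Your proposal is correct and follows essentially the same route as the paper: (2)$\iff$(3) via Lemma \ref{lem_equiv_WS2Filter_Potent}, (1)$\implies$(2) trivially, and (3)$\implies$(1) by playing an enumeration of a $\mu$-sized generating set against Player 2's winning strategy. The paper states the last step more tersely (``this will yield a $\mathcal{C}$-filtration of $M$''), while you spell out the concatenation of the $\mathcal{C}$-filtrations of the successive quotients, which is exactly the splicing already performed in the proof of Corollary \ref{cor_Aleph_1_gen_CFilt}.
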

\begin{proof}
Equivalence of \eqref{item_LessMuPCF} with \eqref{item_WS_MuGen} is true for all modules, by Lemma \ref{lem_equiv_WS2Filter_Potent}.  The implication \eqref{item_ActuallyCF} $\implies$ \eqref{item_LessMuPCF} is trivial.  Finally, to see that \eqref{item_WS_MuGen} implies \eqref{item_ActuallyCF}, fix a $\mu$-sized generating set $X$ for $M$, let $\tau$ be a winning strategy for Player 2 in the $\mathcal{C}$-filtration game of length $\mu$, and play a run of the game where Player 1 enumerates $X$ and Player 2 uses $\tau$.  Since $\tau$ is a winning strategy, this will yield a $\mathcal{C}$-filtration of $M$. 

\end{proof}

%

\subsection{Winning Strategies for Player 1 and AECs}\label{subsec_WS1AEC}

Recall the definition of an Abstract Elementary Class (AEC), due to Shelah:
\begin{definition}\label{def_AEC}
Given a language $\mathcal{L}$ and a class $K$ of $\mathcal{L}$-structures, a partial order $\mathbb{K}=(K, \prec_K)$ is called an \textbf{Abstract Elementary Class (AEC)} iff:
\begin{enumerate}[label=(\roman*)]
 \item If $M \in K$ and $M \simeq N$, then $N \in K$;
 \item If $M \prec_K N$, $M \simeq M'$, and $N \simeq N'$, then $M' \prec_K N'$;
 \item If $M \prec_K N$, then $M$ is a substructure of $N$;
 \item If $M_0$, $M_1$, and $M_2$ are elements of $K$, $M_0$ is a substructure of $M_1$, $M_0 \prec_K M_2$, and $M_1 \prec_K M_2$, then $M_0 \prec_K M_1$.
 \item\label{item_TarskiVaughtAxiom} (Tarski-Vaught axioms) If $\alpha$ is an ordinal and $\langle M_i \ : \  i < \alpha \rangle$ is a $\subseteq$-continuous and increasing $\prec_K$-chain of elements of $K$, then:
\begin{enumerate}
 \item\label{item_TV_ClosedUnderUnions}    $M:=\bigcup_{i < \alpha}M_i$, with the obvious interpretations of $\mathcal{L}$-symbols, is an element of $K$;
 \item\label{item_TV_UnionAboveEach} $M_i \prec_K M$ for each $i < \alpha$; and
 \item\label{item_TV_minimal} if $N \in K$ is such that $M_i \prec_K N$ for all $i < \alpha$, then $M \prec_K N$.
 \end{enumerate}
 \item\label{item_LS_axiom} (L\"owenheim-Skolem number)  There exists a cardinal $LS(\mathbb{K})$ such that whenever $A \subseteq B$ and $B \in K$, there exists an $A' \in K$ such that $A \subseteq A' \prec_K B$ and 
 \[
 |A'| \le |A| + \text{LS}(\mathbb{K}).
 \]
\end{enumerate}
\end{definition}

Given a ring $R$, a regular infinite cardinal $\mu$, and a collection $\mathcal{C}$ of $<\mu$-presented $R$-modules, recall from the introduction that $\Gamma^{\text{Filt}(\mathcal{C})}_{\mu, \text{P1}}$ denotes the class of $R$-modules $M$ such that Player 1 has a winning strategy in the game $\mathcal{G}^{\text{Filt}(\mathcal{C})}_{\mu}(M)$.  And $\prec_R$ denotes the embeddability relation for $R$-modules.

%
%

\begin{remark}
Everything we discuss in this section would also work if we had instead defined $M \prec_R N$ to mean that there is a \textbf{pure} embedding from $M$ to $N$, or even an elementary embedding in with respect to the language of $R$-modules.
\end{remark}

We show that under certain assumptions on $\mathcal{C}$, the class
\[
\Big(  \Gamma^{\text{Filt}(\mathcal{C})}_{\mu, \text{P1}}, \prec_R  \Big)
\]
is an AEC.  The only requirements from Definition \ref{def_AEC} that are not trivial to verify are the requirements \ref{item_TarskiVaughtAxiom} and \ref{item_LS_axiom}.

\begin{lemma}\label{lem_P1_LS_always}
For any infinite regular $\mu$, any ring $R$, and any class  $\mathcal{C}$ of $R$-modules, the partial order $\Big( \Gamma^{\text{Filt}(\mathcal{C})}_{\mu,\text{P1}} , \prec_R \Big)$ satisfies the L\"owenheim-Skolem axiom of Definition \ref{def_AEC}, as witnessed by the cardinal 
\[
\kappa:=\text{max} \ \Big(|R|,2^{<\mu}\Big).
\]
\end{lemma}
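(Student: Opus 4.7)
The plan is a standard Löwenheim-Skolem closure argument. Fix $A \subseteq B$ with $B \in \Gamma^{\text{Filt}(\mathcal{C})}_{\mu,\text{P1}}$ and a winning strategy $\sigma$ for Player 1 in $\mathcal{G}^{\text{Filt}(\mathcal{C})}_\mu(B)$, regarded as a function assigning a $<\mu$-sized subset of $B$ to each $<\mu$-length sequence $(Z_0,\dots,Z_{i-1})$ of $<\mu$-sized subsets of $B$ (Player 2's past moves). The goal is to construct a submodule $A' \le B$ containing $A$, of cardinality at most $|A| + \kappa$, that is \emph{closed under $\sigma$} in the sense that $\sigma(\vec Z) \subseteq A'$ whenever each entry of $\vec Z$ is a $<\mu$-sized subset of $A'$. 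Then $\sigma \restriction A'$ will serve as a winning Player 1 strategy in $\mathcal{G}^{\text{Filt}(\mathcal{C})}_\mu(A')$.

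I build $A'$ by transfinite recursion of length $\mu$: set $A_0 := \langle A \rangle^B_R$; at successors let $A_{\alpha+1}$ be the $R$-submodule of $B$ generated by $A_\alpha$ together with all sets $\sigma(\vec Z)$, where $\vec Z$ ranges over $<\mu$-length sequences of $<\mu$-sized subsets of $A_\alpha$; at limits, take unions. Put $A' := A_\mu$. Regularity of $\mu$ guarantees that any $<\mu$-length sequence $\vec Z$ of $<\mu$-sized subsets of $A'$ already lies in some $A_\alpha$ with $\alpha < \mu$ (each entry, being of size $<\mu$, fits into some $A_\beta$ with $\beta < \mu$, and a supremum of $<\mu$ many such $\beta$'s remains $<\mu$), so $\sigma(\vec Z) \subseteq A_{\alpha+1} \subseteq A'$.

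To verify $\sigma \restriction A'$ wins on $A'$: since $A' \le B$ is a submodule, for any $<\mu$-sized $Z_i \subseteq A'$ one has $\langle Z_i \rangle^{A'}_R = \langle Z_i \rangle^{B}_R$, and similarly for $\langle \bigcup_{k<i} Z_k \rangle$, so condition \eqref{eq_FilterReq} — and hence the legality of Player 2's moves — is the same in both games. If Player 2 could last $\mu$ rounds against $\sigma \restriction A'$ on $A'$, the identical play would also last $\mu$ rounds on $B$, contradicting $\sigma$'s winningness. Hence $A' \in \Gamma^{\text{Filt}(\mathcal{C})}_{\mu,\text{P1}}$, and $A \subseteq A' \prec_R B$ via the inclusion embedding.

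The main point requiring care is the cardinality bound, which amounts to the standard identity $(|A| + \kappa)^{<\mu} = |A| + \kappa$: at a successor step the new generators number at most $|A_\alpha|^{<\mu}$, while at a limit stage one has $|A_\alpha| \le |\alpha| \cdot \sup_{\beta < \alpha}|A_\beta| \le |A| + \kappa$, using $\mu \le 2^{<\mu} \le \kappa$. In the central case $\mu = \omega_1$ the identity becomes $\kappa^{\aleph_0} = \kappa$ for $\kappa = \max(|R|, 2^{\aleph_0})$, which follows from $(2^{\aleph_0})^{\aleph_0} = 2^{\aleph_0}$; the analogous computation $(2^\lambda)^\lambda = 2^\lambda$ handles $\mu = \lambda^+$. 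A routine induction on $\alpha \le \mu$ then yields $|A_\alpha| \le |A| + \kappa$ throughout, giving $|A'| \le |A| + \kappa$ as required.
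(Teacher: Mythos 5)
Your argument is essentially the paper's: the paper builds the hull as an elementary submodel $X \prec (H_\theta,\in,M,R,\tau,A,\mathcal{C}\cap H_\theta)$ of size $\kappa$ that is closed under $<\mu$-sequences (so closure under the strategy comes for free from $\tau \in X$), while you carry out the same closure by an explicit transfinite recursion; the decisive step---that the restriction of Player 1's strategy to the hull is still winning, because legality of Player 2's moves is computed the same way in the submodule as in $B$ and a surviving play below would be a surviving play above---is identical in both. The one point you must repair is the cardinality computation: $(|A|+\kappa)^{<\mu}=|A|+\kappa$ is \emph{not} a ZFC identity. For instance, with $\mu=\omega_1$, $2^{\aleph_0}=\aleph_1$ (so $\kappa=\aleph_1$ for $R=\mathbb{Z}$) and $|A|=\aleph_\omega$, K\"onig's theorem gives $\aleph_\omega^{\aleph_0}>\aleph_\omega$, so your recursion need not stay below $|A|+\kappa$ when $|A|>\kappa$ has cofinality $<\mu$. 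The paper sidesteps this by verifying the L\"owenheim--Skolem clause only for $|A|\le\kappa$, where the identity you need reduces to $\kappa^{<\mu}=\kappa$; you should restrict to that case in the same way (and note that even $\kappa^{<\mu}=\kappa$ implicitly uses $|R|^{<\mu}\le\max(|R|,2^{<\mu})$, which holds in all the paper's intended applications).
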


\begin{proof}
Suppose $M$ is an $R$-module and Player 1 has a winning strategy in the game $\mathcal{G}^{\text{Filt}(\mathcal{C})}_{\mu}(M)$; let $\tau$ be such a strategy.  Let $A$ be a subset of $M$ of cardinality at most $\kappa$.  We need to find a submodule of $M$ of size at most $\kappa$, containing $A$, for which Player 1 still has a winning strategy.  Fix a regular cardinal $\theta$ such that $M,R,\tau,A \in H_\theta$.  Since $\kappa^{<\mu} = \kappa$, there exists an
\[
X \prec (H_\theta,\in, M,R, \tau, A, \mathcal{C} \cap H_\theta)
\]  
such that $|X|=\kappa \subset X$ and $X$ is closed under $<\mu$-length sequences.  Since $R \in X$ and $|R|\le \kappa \subset X$, it follows that $R \subset X$, and hence that $X \cap M$ is closed under multiplication from $R$.  Hence $X \cap M$ is an $R$-submodule of $M$; in fact, it is an elementary submodule in the language of $R$-modules.   Similarly, since $A \in X$ and $|A|\le \kappa \subset X$, it follows that $A \subset X$.  So $X \cap M \supseteq A$.

We claim that Player 1 has a winning strategy in $\mathcal{G}^{\text{Filt}(\mathcal{C})}_{\mu}\big( X \cap M \big)$.  This follows from the fact that $\tau \in X$, $X$ is closed under $<\mu$- sequences, and the definition of the game requires both players to play $<\mu$-sized subsets of $M$ at each stage.  More precisely, the winning strategy is just the restriction of $\tau$ to partial, $<\mu$-length runs of the game $\mathcal{G}^{\text{Filt}(\mathcal{C})}_{\mu}(M)$ such that Player 2's moves are always contained in $X \cap M$.  The model $X$ is closed under such plays, since $\tau \in X$ and $X$ is $<\mu$-closed.
\end{proof}

\begin{definition}\label{def_QuotHered}
Let us say that a class $\mathcal{C}$ of $R$-modules is \textbf{quotient-hereditary} if the following holds:    whenever $Z \subseteq Z'$ are $R$-modules such that 
\[
Z \text{ and } Z'/Z \text{ are } \mathcal{C} \text{-filtered,}
\]
then for all submodules $U'$ of $Z'$,
\[
U' \cap Z \text{ and } (U' \cap Z')/(U' \cap Z) \text{ are } \mathcal{C} \text{-filtered.}
\]
\end{definition}

%
%
%

\begin{lemma}\label{lem_UpClosed}
Suppose $\mathcal{C}$ is a quotient-hereditary class of $R$-modules. Then for any infinite regular $\mu$, $\Gamma^{\text{Filt}(\mathcal{C})}_{\mu,\text{P1}}$ is upward closed under the submodule relation.
\end{lemma}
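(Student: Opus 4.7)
The plan is, given a winning strategy $\tau$ for Player 1 in $\mathcal{G}_\mu^{\text{Filt}(\mathcal{C})}(M)$ and an $R$-module $N$ with $M \subseteq N$ (after identifying $M$ with its image under the embedding $M \prec_R N$), to construct a winning strategy $\tau'$ for Player 1 in $\mathcal{G}_\mu^{\text{Filt}(\mathcal{C})}(N)$. The idea is that $\tau'$ runs a \emph{virtual} copy of the $M$-game in parallel with the actual $N$-game. At each round $i$, Player 1's challenge $X_i$ in the $N$-game is whatever $\tau$ prescribes in the virtual $M$-game (automatically a $<\mu$-sized subset of $M \subseteq N$, so a legal $N$-move). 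When Player 2 responds with $Z_i \subseteq N$, the strategy $\tau'$ reinterprets this as a virtual response $Z_i' \subseteq M$ whose generated submodule $\langle Z_i' \rangle^M$ tracks the ``$M$-part'' of what Player 2 has built so far in $N$.

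The crucial input is quotient-hereditariness. Let $A_j := \langle \bigcup_{k<j} Z_k \rangle^N$; the game rule forces each $A_{j+1}/A_j$ to be $\mathcal{C}$-filtered, so by concatenating these filtrations (as in Lemma \ref{lem_ConcatenateFilt}) and taking continuous unions at limits, each $A_j$ is itself $\mathcal{C}$-filtered. Applying Definition \ref{def_QuotHered} with $Z := A_i$, $Z' := A_{i+1}$, and $U' := M \cap A_{i+1}$ then yields that both $M \cap A_i$ and $(M \cap A_{i+1})/(M \cap A_i)$ are $\mathcal{C}$-filtered, which is exactly the filtration invariant needed for the virtual $M$-move $Z_i'$ to satisfy the filtration clause \eqref{eq_FilterReq}. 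A run of the $N$-game in which Player 2 survives $\mu$ rounds against $\tau'$ would then induce a valid virtual run of the $M$-game also lasting $\mu$ rounds against $\tau$, contradicting that $\tau$ is winning; hence $\tau'$ is a winning strategy on $N$.

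The main obstacle is choosing $Z_i'$ to be $<\mu$-sized while maintaining the containment and filtration invariants round-to-round. One cannot simply set $\langle Z_i' \rangle^M := M \cap A_{i+1}$, because this submodule need not be $<\mu$-generated (especially when $|R| \ge \mu$). Instead, one uses the $\mathcal{C}$-filtration of $(M \cap A_{i+1})/(M \cap A_i)$ produced by quotient-hereditariness to pick a $<\mu$-generated submodule $U_i \subseteq M \cap A_{i+1}$ that lies in this filtration, contains $X_i \cup \bigcup_{k<i} \langle Z_k' \rangle^M$, and has $U_i/\langle \bigcup_{k<i} Z_k' \rangle^M$ still $\mathcal{C}$-filtered; then $Z_i'$ is taken to be any $<\mu$-sized generating set of $U_i$. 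This selection is possible by the regularity of $\mu$, the inductively maintained containment $\bigcup_{k<i} \langle Z_k' \rangle^M \subseteq M \cap A_i$, and the $<\mu$-presentation of the members of $\mathcal{C}$, though it is the delicate bookkeeping where quotient-hereditariness does its work.
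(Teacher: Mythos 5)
Your overall architecture coincides with the paper's: Player 1 simulates a virtual run of the game on $M$ using $\tau$, quotient-hereditariness is invoked to show that the projection of the $N$-position is a legal position of the $M$-game, and a full $\mu$-length run against the new strategy would project to a full run against $\tau$, a contradiction. Your application of Definition \ref{def_QuotHered} with $Z=A_i$, $Z'=A_{i+1}$, $U'=M\cap A_{i+1}$ is exactly the intended use of the hypothesis.

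The gap is in your construction of the virtual Player 2 moves. You rightly note that $M\cap A_{i+1}$ need not be $<\mu$-generated, but your remedy---choosing a $<\mu$-generated $U_i\subseteq M\cap A_{i+1}$ that ``lies in the filtration'' of $(M\cap A_{i+1})/(M\cap A_i)$---cannot work as stated: every term of a filtration of that quotient has the form $V_\xi/(M\cap A_i)$, so the corresponding submodule $V_\xi$ of $M\cap A_{i+1}$ contains $M\cap A_i$; whenever $M\cap A_i$ fails to be $<\mu$-generated (precisely the case you are trying to handle), no such $U_i$ can be $<\mu$-generated. Thus the selection you assert to be possible is impossible in exactly the problematic situation, and the ``delicate bookkeeping'' is left undone. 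The paper avoids any selection: the virtual Player 2 move at round $i$ is simply the $<\mu$-sized set $Z_i\cap M$, and quotient-hereditariness is applied to conclude that $M\cap\bigcup_{k<i}\langle Z_k\rangle$ and $\bigl(M\cap\langle Z_i\rangle\bigr)/\bigl(M\cap\bigcup_{k<i}\langle Z_k\rangle\bigr)$ are $\mathcal{C}$-filtered, these being identified with $\langle Z_i\cap M\rangle$ and its predecessors (after normalizing Player 2's moves so that $Z_i\supseteq X_i\cup\bigcup_{k<i}Z_k$). If you insist on extracting genuinely $<\mu$-generated approximations inside $M\cap A_{i+1}$, you would need machinery on the order of Hill's Lemma (Lemma \ref{lem_HillClub}) to make the choices cohere across rounds, which is far heavier than what the lemma requires.
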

\begin{proof}
Assume $M$ is a submodule of $N$ and Player 1 has a winning strategy $\tau$ in the game on $M$.  Define a strategy $\tau^N$ for Player 1 in the game on $N$ as follows:
\begin{itemize}
 \item $\tau^N(\emptyset):= \tau(\emptyset)$; note this is a $<\mu$-sized subset of $M$.
 \item Suppose $j < \mu$ and 
\[
p = \big \langle (X_i,Z_i) \ : \ i < j \big \rangle
\]
is a partial run of the game on $N$ of length $j$, where the $X_i$'s are Player 1's moves and the $Z_i$'s are Player 2's responses.
\begin{itemize}
 \item If at least one $X_i$ fails to be contained in $M$, then $\tau^N(p)$ is undefined.
 \item Otherwise, let
\[
p|M:= \big \langle (X_i, Z_i \cap M) \ : \ i < j \big \rangle
\]
Since $p$ is a partial run of the game, for each $i < j$ the modules $\bigcup_{k < i} \langle Z_k \rangle$ and $\langle Z_i \rangle / \bigcup_{k < i} \langle Z_k \rangle$ are
 $\mathcal{C}$-filtered; and since $\mathcal{C}$ is quotient-hereditary, it follows that 
\[
M \cap \bigcup_{k < i } \langle Z_k \rangle \text{ and } \frac{M \cap \langle Z_i \rangle}{M \cap \bigcup_{k < i } \langle Z_k \rangle}
\]
are $\mathcal{C}$-filtered.  Furthermore, since the $X_i$'s are assumed to be contained in $M$ by our case, and $Z_i \supset X_i$, it follows that each $M \cap \langle Z_i  \rangle = \langle Z_i \cap M \rangle$ contains $X_i$.  It follows that $p|M$ is a run of the game on $M$.  If $p|M$  is a game according to $\tau$, then set 
\[
\tau^N(p):= \tau(p|M).
\]
(Otherwise leave $\tau^N(p)$ undefined).
\end{itemize}

\end{itemize}

It follows that $\tau^N$ is a winning strategy for Player 1 in the game on $N$.  If not, there would be a game 
\[
\big\langle X_i, Z_i  \ : \ i < \mu \big\rangle
\]
 on $N$ lasting $\mu$ rounds, where Player 1 used $\tau^N$ along the way (in particular, $\tau^N$ was defined at every stage).  But then 
\[
\big\langle X_i, Z_i \cap M \ : \ i < \mu \big\rangle
\]
is a game of length $\mu$ on $M$, where Player 1 used $\tau$.  This contradicts that $\tau$ was a winning strategy for Player 1 in the game on $M$.
\end{proof}

We can now complete the proof of Theorem \ref{thm_AlwaysAEC} (except the ``in particular" part, which is taken care of byLemma \ref{lem_HeredRingQuotHered} below).  Lemma \ref{lem_UpClosed} immediately implies clause \eqref{item_TV_ClosedUnderUnions} of the Tarski-Vaught axiom of Definition \ref{def_AEC}.  The fact that our ordering $\prec_R$ is just the submodule (or pure submodule, or elementary submodule) relation ensures that parts \eqref{item_TV_UnionAboveEach} and \eqref{item_TV_minimal} of the Tarski-Vaught axiom are straightforward.  Lemma \ref{lem_P1_LS_always} ensures that the L\"owenheim-Skolem axiom \ref{item_LS_axiom} of Definition \ref{def_AEC} holds.  The other requirements in Definition \ref{def_AEC} are routine.

Recall that a ring $R$ is \textbf{hereditary} if submodules of projective $R$-modules are always projective.  We next justify the various ``in particular" clauses that appear in the statements of the theorems in the introduction.

\begin{lemma}\label{lem_HeredRingQuotHered}
If $R$ is a hereditary ring, then the class of countably generated, projective $R$-modules is quotient-hereditary.  The same is true for the class of countably presented, projective $R$-modules.
\end{lemma}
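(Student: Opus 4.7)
The plan is to reduce the quotient-hereditary property to well-known closure properties of projective modules, by observing that over a hereditary ring, being $\mathcal{C}$-filtered collapses to being projective, for both variants of $\mathcal{C}$.

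First, I would establish that, for $\mathcal{C}$ either the countably generated or the countably presented projective $R$-modules, a module $M$ is $\mathcal{C}$-filtered if and only if $M$ is projective. The forward direction is Eklof's Lemma applied to the class of projectives (characterized by vanishing of $\text{Ext}^1(-,N)$ for every module $N$): every element of $\mathcal{C}$ is projective, so a $\mathcal{C}$-filtered module has vanishing $\text{Ext}^1$ against every module and is itself projective. The converse is Kaplansky's Theorem: any projective decomposes as a direct sum of countably generated (indeed countably presented) projective summands, and filtering by partial sums along any well-ordering of the index set yields a $\mathcal{C}$-filtration. For the countably presented variant I would also record the auxiliary fact that, over a hereditary ring, every countably generated projective $P$ is countably presented: fix a surjection $\pi : R^{(\omega)} \twoheadrightarrow P$; its kernel is a submodule of a free module, hence projective by hereditarity, hence a direct summand of $R^{(\omega)}$ and therefore countably generated.

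Second, suppose $Z \subseteq Z'$ are such that both $Z$ and $Z'/Z$ are $\mathcal{C}$-filtered, and let $U' \subseteq Z'$ be an arbitrary submodule. By the first step, $Z$ and $Z'/Z$ are projective. Now $U' \cap Z$ sits inside the projective module $Z$, while $U'/(U' \cap Z) \cong (U' + Z)/Z$ embeds into the projective module $Z'/Z$ via the map induced by the inclusion $U' \hookrightarrow Z'$. Since $R$ is hereditary, both $U' \cap Z$ and $U'/(U' \cap Z)$ are projective, and hence $\mathcal{C}$-filtered by the first step.

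I expect no real obstacle: the argument is a direct concatenation of Eklof's Lemma, Kaplansky's Theorem, and the definition of hereditary (applied once on each side of the exact sequence $0 \to U' \cap Z \to U' \to U'/(U' \cap Z) \to 0$). The only point worth flagging is that Kaplansky's output must be recast as a $\mathcal{C}$-filtration in the sense used in the paper; this is routine given a well-ordering of the summands and the fact that consecutive quotients are exactly the successive summands.
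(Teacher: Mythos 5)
Your proof is correct, and its key step is genuinely different from (and shorter than) the one in the paper. Both arguments begin the same way, reducing ``$\mathcal{C}$-filtered'' to ``projective'' via Kaplansky's Theorem and Eklof's Lemma (your auxiliary observation that over a hereditary ring every countably generated projective is countably presented is a clean way to handle the countably presented variant; the paper instead appeals to a refinement of Kaplansky's Theorem). Where you diverge is in proving that $U'/(U'\cap Z)$ is projective. The paper remarks that this ``seems to require a little argument'' and proceeds by building a dual basis for $Z'$ conservatively extending one for $Z$, well-ordering it so that $B'\cap Z$ comes first, filtering $U'$ by the intersections with the partial spans, and identifying the consecutive quotients with ideals $J_\xi$ of $R$ --- essentially rerunning Kaplansky's classical proof that submodules of free modules over hereditary rings are direct sums of ideals. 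You instead invoke the second isomorphism theorem, $U'/(U'\cap Z)\cong (U'+Z)/Z$, note that the right-hand side is a submodule of the projective module $Z'/Z$, and apply the paper's own definition of hereditary (``submodules of projectives are projective'') directly --- exactly as the paper itself already does to conclude that $U'\cap Z$ is projective. Since that definition of hereditary is the standing hypothesis, your shortcut is fully legitimate and eliminates the entire dual-basis construction. What the paper's longer argument buys is extra structural information (explicit compatible filtrations of $U'\cap Z$ and $U'$ by ideals, and the direct-sum decompositions), but none of that is needed for the lemma as stated.
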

\begin{proof}

By Kaplansky's Theorem, being filtered by countably generated (or countably presented) projective modules is equivalent to being projective.\footnote{Kaplansky's Theorem usually refers to the ``projective implies filtered by countably generated projectives" direction;  the other direction, i.e.\ that projectives are closed under transfinite extensions, is a special case of Eklof's Lemma~\cite{MR453520}.}  So we just need to show that, if $Z \subset Z'$ are $R$-modules such that $Z$ and $Z'/Z$ are projective, then for every submodule $U'$ of $Z'$, both $U' \cap Z$ and $U'/(U' \cap Z)$ are projective.

Now since $Z$ and $Z'/Z$ are projective, so is $Z'$, and hence (by hereditary property of $R$) $U'$ and $U' \cap Z$ are projective.  But projectivity of their quotient seems to require a little argument, which is a slight modification of an argument of Kaplansky (see page 42 of Lam~\cite{MR1653294}).  

Since $Z$ and $Z'/Z$ are projective, then $Z' \simeq Z \oplus Z'/Z$; by amalgamating dual bases for $Z$ and $Z'/Z$ in the obvious way, there exists a dual basis $\mathcal{D}' = \Big( B', (f'_b)_{b \in B'} \Big)$ for $Z'$ such that 
\begin{equation}\label{eq_ConsExt}
\forall x \in Z \ \text{sprt}_{\mathcal{D}'}(x):= \{ b \in B' \ : \ f'_b(x) \ne 0  \} \text{ is contained in } B' \cap Z.
\end{equation}

Fix an enumeration $\langle b_\xi \ : \ \xi < |B'| \rangle$ of $B'$ such that members of $B' \cap Z$ are enumerated first; say $\zeta_Z \le |B'|$ is such that
\[
B' \cap Z = \{ b_\xi \ : \ \xi < \zeta_Z \}
\]
Let $f'_\xi$ denote $f'_{b_\xi}$ for each $\xi < |B'|$.  For each $\mu \le |B'|$ let $\boldsymbol{\textbf{span}\Big( \mathcal{D}' \restriction \mu  \Big)}$ denote the set of $x \in Z'$ such that   
\[
\text{sprt}_{\mathcal{D}'}(x) \subseteq \{ b_\xi \ : \ \xi < \mu  \}.
\]
For each $\xi < |B'|$ let $U'_\xi:= U' \cap \text{span}\Big( \mathcal{D}' \restriction \xi \Big)$.  Note that the sequence 
\[
\Big\langle U'_\xi \ : \ \xi < |B'| \Big\rangle
\]
is a filtration of $U'$, and the union of the first $\zeta_Z$ many entries is $U' \cap Z$ by \eqref{eq_ConsExt}.  For each $\xi < |B'|$, define $J_\xi$ to be the image of the $R$-module homomorphism
\[
 f'_\xi \restriction U'_{\xi+1}   
\]
and observe that the kernel of this map is exactly $U'_\xi$.  Hence,
\begin{equation}\label{eq_Quotient_Iso_J}
\frac{U'_{\xi+1}}{U'_\xi} \ \simeq \ J_\xi.
\end{equation}
Now $J_\xi$ is an $R$-submodule of the (free) module $R$ (i.e., $J_\xi$ is an ideal in $R$), and hence by the assumption that $R$ is hereditary, \eqref{eq_Quotient_Iso_J} is projective for each $\xi < |B'|$.  So
\[
\langle U'_\xi \ : \ \xi < \zeta_Z \rangle \text{ is a projective filtration of } U' \cap Z,
\]  
and
\[
\langle U'_\xi \ : \ \xi < |B'| \rangle \text{ is a projective filtration of } U'.
\]
It follows from splitting properties of projective modules that
\[
U' \cap Z \ \simeq \ \bigoplus_{\xi < \zeta_Z} U'_{\xi+1}/U'_\xi \ \simeq \  \bigoplus_{\xi < \zeta_Z} J_\xi,
\]
\[  \ \ 
U'  \ \simeq \bigoplus_{\xi < |B'|} U'_{\xi+1}/U'_\xi \ \simeq \  \bigoplus_{\xi < |B'|} J_\xi ,
\]
and
\[
U'/(U' \cap Z) \simeq \bigoplus_{\xi \in \big[ \zeta_Z, \  |B'| \big)} J_\xi.
\]
Since each $J_\xi$ is projective, and direct sums of projective modules are projective, this completes the proof.
\end{proof}

\begin{corollary}\label{cor_AlwaysAEC}
If $R$ is a hereditary ring, then the class of $R$-modules for which Player 1 has a winning strategy in the Projective Filtration game of length $\omega_1$ (see Remark \ref{rem_ProjFiltGame}), under the embeddability (or pure embeddability) ordering, is an AEC.

The same statement holds if ``Projective Filtration Game" is replaced by ``Free Filtration Game".
\end{corollary}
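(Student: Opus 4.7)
The plan is to reduce each statement of the corollary to Theorem \ref{thm_AlwaysAEC} applied with $\mu = \omega_1$. In both cases, the task is to identify the relevant class $\mathcal{C}$ of countably presented $R$-modules and verify that $\mathcal{C}$ is quotient-hereditary in the sense of Definition \ref{def_QuotHered}; once that is done, Theorem \ref{thm_AlwaysAEC} immediately yields that $\bigl(\Gamma^{\text{Filt}(\mathcal{C})}_{\omega_1,\text{P1}},\prec_R\bigr)$ is an AEC with L\"owenheim-Skolem number at most $\max(|R|, 2^{\aleph_0})$, and the same argument goes through verbatim if $\prec_R$ is replaced by the pure (or elementary) embeddability relation, as noted in the remark preceding Theorem \ref{thm_AlwaysAEC}.

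For the Projective Filtration Game, I would take $\mathcal{C} = \mathcal{C}_p$, the collection of countably presented projective $R$-modules, as in Remark \ref{rem_ProjFiltGame}. Since $R$ is hereditary, Lemma \ref{lem_HeredRingQuotHered} directly gives that $\mathcal{C}_p$ is quotient-hereditary, so Theorem \ref{thm_AlwaysAEC} applies with no further work.

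For the Free Filtration Game, take $\mathcal{C} = \{R\}$. Here the main task is to check that $\{R\}$ is quotient-hereditary: given $Z \subseteq Z'$ with both $Z$ and $Z'/Z$ free (equivalently, $\{R\}$-filtered) and an arbitrary submodule $U' \subseteq Z'$, one wants to exhibit $\{R\}$-filtrations of $U' \cap Z$ and $U'/(U' \cap Z)$. My plan is to follow the blueprint of the Kaplansky-style argument in the proof of Lemma \ref{lem_HeredRingQuotHered}, but with a \emph{genuine} basis of the free module $Z'$ in place of a dual basis: choose a well-ordered basis $\langle b_\xi : \xi < |B'|\rangle$ of $Z'$ whose initial segment of length $\zeta_Z$ is a basis of $Z$ (such a basis exists because the short exact sequence $0 \to Z \to Z' \to Z'/Z \to 0$ splits, $Z'/Z$ being free), form the induced filtration $U'_\xi := U' \cap \mathrm{span}(b_\eta : \eta < \xi)$ of $U'$, and examine the ideals $J_\xi \subseteq R$ arising as its consecutive quotients.

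The main obstacle is establishing that each $J_\xi$ is isomorphic to $R$ (and not merely projective as in the projective case): hereditarity of $R$ gives that each $J_\xi$ is projective, which suffices for the projective game but not literally for the free game. In the motivating abelian group setting $R = \mathbb{Z}$, every nonzero ideal of $\mathbb{Z}$ is isomorphic to $\mathbb{Z}$, so this obstacle evaporates and the filtration of $U'$ can be refined into a genuine $\{R\}$-filtration; more generally, for hereditary rings in which every ideal is free (e.g.\ left principal ideal domains) the same argument applies. Once quotient-heredity of $\{R\}$ is verified, Theorem \ref{thm_AlwaysAEC} completes the proof exactly as in the projective case.
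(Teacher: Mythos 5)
Your treatment of the Projective Filtration Game is exactly the paper's argument: Corollary \ref{cor_AlwaysAEC} is placed immediately after Lemma \ref{lem_HeredRingQuotHered} precisely so that it follows from Theorem \ref{thm_AlwaysAEC} with $\mu=\omega_1$ and $\mathcal{C}=\mathcal{C}_p$ the countably presented projectives, whose quotient-heredity is the content of that lemma; nothing further is needed there, and your reading of the remark about pure/elementary embeddings is also correct.

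Your hesitation about the Free Filtration Game is well founded, and you have in fact located a genuine lacuna: the paper supplies no separate argument for $\mathcal{C}=\{R\}$, and Lemma \ref{lem_HeredRingQuotHered} addresses only the projective classes. As your basis-refinement argument shows, hereditariness only makes the consecutive quotients $J_\xi$ projective; $\{R\}$ is quotient-hereditary essentially only when every left ideal of $R$ is free. This is not a removable defect of the proof. If $R$ is a countable Dedekind domain with an ideal class $[I]$ of infinite order, then by Steinitz $I^{(n)}\cong R^{n-1}\oplus I^{n}$ is countably generated, projective, and non-free for every $n\ge 1$, so Player 1 wins the Free Filtration Game on each $I^{(n)}$ (after Player 1 enumerates a generating set in round $0$, any legal $Z_0$ would have to generate all of $I^{(n)}$ and make it $\{R\}$-filtered, i.e.\ free); yet the union of the pure chain $I^{(1)}\subseteq I^{(2)}\subseteq\cdots$ is $I^{(\omega)}\cong R^{(\omega)}$, which is free and countably generated, so Player 2 wins there. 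Hence $\Gamma^{\text{Filt}(\{R\})}_{\omega_1,\text{P1}}$ is not closed under unions of $\prec_R$-chains, and the free clause of the corollary fails for such $R$. Your proposed restriction---to hereditary rings all of whose left ideals are free, e.g.\ $R=\mathbb{Z}$ or left principal ideal domains---is the correct repair, and it covers the only instance the paper actually uses later, namely $R=\mathbb{Z}$ in Theorem \ref{thm_AnswerBaldwin}.
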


  In Section \ref{subsec_DualBasisGame} we give an alternate, more concrete description of the Projective Filtration Game.

\subsection{The Dual Basis Game}\label{subsec_DualBasisGame}

This section is not essential to the rest of the paper, but we describe a slightly more constructive/intuitive game that is equivalent to the Projective Filtration Game; in particular, each move in this equivalent game involves just finitely many decisions.  We focus on $\mu = \omega_1$ for concreteness; i.e., we give an alternate description of the game $\mathcal{G}^{\text{Filt}(\mathcal{C})}_{\omega_1}(-)$, where $\mathcal{C}$ is the collection of countably presented, projective modules.  

Recall the definition of \emph{dual basis} from Section \ref{sec_Prelims}, and the fact that a module is projective if and only if it has a dual basis.  We introduce weaker variant of a dual basis.  Given an $R$-module $M$ and a subset $X$ of $M$, say that a pair $\mathcal{D} = \Big( B, \big( f_b \big)_{b \in B} \Big)$ is an \textbf{$\boldsymbol{M}$-Dual Basis for $\boldsymbol{\langle X \rangle}$} if it satisfies the usual requirements of being a dual basis for $\langle X \rangle$, except that we do \emph{not} require $B \subseteq \langle X \rangle$; merely that $B  \subseteq M$ (hence the $M$-prefix in the notation).  More precisely, $\mathcal{D}$ is an $M$-Dual Basis for $\langle X \rangle$ iff:
\begin{enumerate}
 \item $B \subseteq M$;
 \item Each $f_b$ is an $R$-linear map from $\langle X \rangle \to R$;
 \item For each $x \in  X $, $\text{sprt}_{\mathcal{D}}(x):= \{ b \in B \ : \ f_b(x) \ne 0 \}$ is finite, and $x = \sum_{b \in \text{sprt}_{\mathcal{D}}(x)} f_b(x) b$.\footnote{It is an easy exercise to verify that if this requirement holds of every element of $X$, it also holds for every element of $\langle X \rangle$. }
\end{enumerate}

\noindent If $X \subseteq Y \subseteq M$, $\mathcal{D}^X = \Big(B^X, \big( f^X_b \big)_{b \in B_X} \Big)$ is an $M$-Dual Basis for $\langle X \rangle$, and $\mathcal{D}^Y= \Big(B^Y, \big( f^Y_b \big)_{b \in B_Y} \Big)$ is an $M$-Dual Basis for $\langle Y \rangle$, we say that $\mathcal{D}^Y$ is a \textbf{conservative extension} of $\mathcal{D}^X$ if $B^Y \supseteq B^X$, $f^Y_b$ extends $f^X_b$ for all $b \in B^X$, and for every $x \in X$, $\text{sprt}_{\mathcal{D}^X}(x) = \text{sprt}_{\mathcal{D}^Y}(x)$.

\begin{definition}
Let $R$ be a ring and $M$ an $R$-module.  The \textbf{Dual Basis Game on $\boldsymbol{M}$ of length $\boldsymbol{\omega_1}$}, denoted $\boldsymbol{\textbf{DB}_{\omega_1}(M)}$, is the two-player game where, at each countable round $i$:
\begin{itemize}
 \item Player 1 plays an element $x_i$ of $M$.
 \item Player 2 responds, if possible, with an $M$-Dual Basis 
 \[
 \mathcal{D}^i = \Big( B^i,  \big( f^i_b \big)_{b \in B^i} \Big)
 \]
 for $\Big\langle  \{ x_k \ : \ k \le i \} \cup \bigcup_{k < i} B^k \Big\rangle$ such that:
 \begin{enumerate}
  \item $\mathcal{D}^i$ conservatively extends $\mathcal{D}^k$ for all $k < i$; 
  \item  $B^i \setminus \bigcup_{k < i } B^k$ is finite.
 \end{enumerate}
 
\end{itemize}


Player 2 wins if she lasts $\omega_1$ rounds; otherwise Player 1 wins. 
\end{definition}

\noindent The conservativity requirement for the extensions ensures that supports don't grow into an infinite set as the game progresses.  

The game $\text{DB}_{\omega_1}(M)$ is equivalent to the Projective Filtration Game of length $\omega_1$ on $M$ introduced in Remark \ref{rem_ProjFiltGame}.  Since we will not use this fact, but only mention it as a curiosity, we will omit the proof.  The basic reason the games are equivalent is:

\begin{fact}\label{fact_ProjQuot}
For any modules $Z \subset Z'$, the following are equivalent:
\begin{enumerate}
 \item $Z$ and $Z'/Z$ are projective;
 \item There is a dual basis $\mathcal{D}$ for $Z$ that can be \textbf{conservatively extended} to a dual basis $\mathcal{D}'$ for $Z'$; this means that $\mathcal{D}'$ extends $\mathcal{D}$ in the obvious way and, moreover, for all $z \in Z$, $\text{sprt}_{\mathcal{D}}(z) = \text{sprt}_{\mathcal{D}'}(z)$.  (Note: here we are referring to ordinary dual bases, not $M$-dual bases).
 \item\label{item_EveryDBextendsCons} $Z$ is projective and every dual basis for $Z$ can be conservatively extended to a dual basis for $Z'$.
\end{enumerate}  
\end{fact}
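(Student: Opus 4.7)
\medskip
\noindent\textbf{Proof plan for Fact \ref{fact_ProjQuot}.}

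The plan is to prove the cycle (1) $\Rightarrow$ (3) $\Rightarrow$ (2) $\Rightarrow$ (1); since (3) $\Rightarrow$ (2) is immediate (every projective $Z$ has \emph{some} dual basis), the real content is in the two remaining implications, and both are driven by the observation that ``conservative extension of dual bases'' is just a dual-basis incarnation of ``direct summand''.

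For (1) $\Rightarrow$ (3), I would fix an arbitrary dual basis $\mathcal{D} = \bigl(B, (f_b)_{b\in B}\bigr)$ for $Z$. Since $Z'/Z$ is projective, the short exact sequence $0 \to Z \to Z' \to Z'/Z \to 0$ splits; fix a section $\sigma: Z'/Z \to Z'$ of the quotient map $q: Z' \to Z'/Z$, and write $\pi := \operatorname{id}_{Z'} - \sigma\circ q : Z' \to Z$ for the associated retraction. Fix a dual basis $\mathcal{D}_0 = \bigl(B_0, (g_c)_{c\in B_0}\bigr)$ for $Z'/Z$, and set $\widetilde{B}_0 := \{\sigma(c) : c \in B_0\}$. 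Define $\mathcal{D}' = \bigl(B \sqcup \widetilde{B}_0, (f'_{\bullet})\bigr)$ by
\begin{equation*}
f'_b(z') := f_b(\pi(z')) \text{ for } b \in B, \qquad f'_{\sigma(c)}(z') := g_c(q(z')) \text{ for } c \in B_0.
\end{equation*}
A direct check using $z' = \pi(z') + \sigma(q(z'))$ shows that $\mathcal{D}'$ is a dual basis for $Z'$ with finite supports. Conservativity is automatic: for $z \in Z$ we have $q(z) = 0$ and $\pi(z) = z$, so $f'_{\sigma(c)}(z) = 0$ for all $c \in B_0$ and $f'_b(z) = f_b(z)$ for all $b \in B$, giving $\operatorname{sprt}_{\mathcal{D}'}(z) = \operatorname{sprt}_{\mathcal{D}}(z)$.

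For (2) $\Rightarrow$ (1), suppose $\mathcal{D}' = \bigl(B', (f'_b)_{b \in B'}\bigr)$ is a conservative extension of $\mathcal{D} = \bigl(B, (f_b)_{b \in B}\bigr)$. Existence of $\mathcal{D}$ already gives $Z$ projective, and existence of $\mathcal{D}'$ gives $Z'$ projective. For $z' \in Z'$ define
\begin{equation*}
\rho(z') := \sum_{b \in B \cap \operatorname{sprt}_{\mathcal{D}'}(z')} f'_b(z')\, b;
\end{equation*}
this is a well-defined $R$-linear map $Z' \to Z$ (using $B \subseteq Z$ and that the support is finite). For $z \in Z$, conservativity forces $\operatorname{sprt}_{\mathcal{D}'}(z) \subseteq B$, so $\rho(z) = \sum_{b \in B} f'_b(z) b = \sum_{b \in B} f_b(z) b = z$. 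Hence $\rho$ retracts $Z'$ onto $Z$, the inclusion $Z \hookrightarrow Z'$ splits, and $Z'/Z$ is a direct summand of the projective module $Z'$, hence projective.

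The only step with any subtlety is checking, in (2) $\Rightarrow$ (1), that $\rho$ is genuinely additive and lands in $Z$ --- which it does because $B \subseteq Z$ and the sums are finite --- and that conservativity is \emph{exactly} what is needed for $\rho|_Z = \operatorname{id}_Z$. The rest is bookkeeping, and no delicate point about the ring $R$ enters.
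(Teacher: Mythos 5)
Your proposal is correct, but note that the paper deliberately omits a proof of Fact \ref{fact_ProjQuot} (it is stated ``only as a curiosity''), so there is no in-paper argument to compare against line by line. Your construction is the natural one, and it matches the one place the paper does carry out this kind of reasoning: in the proof of Lemma \ref{lem_HeredRingQuotHered}, a splitting $Z' \simeq Z \oplus Z'/Z$ is used to ``amalgamate'' dual bases of $Z$ and $Z'/Z$ into a dual basis $\mathcal{D}'$ for $Z'$ with $\text{sprt}_{\mathcal{D}'}(x) \subseteq B' \cap Z$ for all $x \in Z$ --- exactly your $(1)\Rightarrow(3)$ construction, strengthened to start from an \emph{arbitrary} dual basis of $Z$ by transporting it along the retraction $\pi = \operatorname{id}_{Z'} - \sigma \circ q$. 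Your $(2)\Rightarrow(1)$ direction, recovering the retraction $\rho(z') = \sum_{b \in B} f'_b(z')\,b$ from conservativity, is the right converse and is where the content of the equivalence lives; it correctly isolates that conservativity is precisely the statement that $\rho$ restricts to the identity on $Z$. The only cosmetic point worth a sentence in a written-up version: in $(1)\Rightarrow(3)$ one should observe that $\sigma$ is injective and that $\sigma(c) \in Z$ forces $c = 0$, so the index set $B \sqcup \widetilde{B}_0$ is genuinely (essentially) disjoint; treating the dual basis as an indexed family makes this a non-issue.
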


Although they are equivalent, the games $\text{DB}_{\omega_1}(M)$ and the Projective Filtration Game of length $\omega_1$ on $M$ proceed at different paces; the former involves finitely many new objects at each round, while the latter involves countably many.

%
%
%

\subsection{Relation to Ehrenfeucht-Fra\"iss\'{e} games}\label{SubSec_DB_EF}

In this section we show that determinacy of Filtration Games implies determinacy of  certain Ehrenfeucht-Fra\"iss\'{e} games, including the kind appearing in Mekler et al.~\cite{MR1191613}.

If $\mathfrak{A}=(A,\dots)$, $\mathfrak{B}=(B,\dots)$ is a pair of structures in the same (relational) signature, and $\mu$ is an ordinal, the \textbf{Ehrenfeucht-Fra\"iss\'{e} game of length $\mu$ on the pair $\mathfrak{A},\mathfrak{B}$}, denoted $\textbf{EF}_{\boldsymbol{\mu}}(\boldsymbol{\mathfrak{A},\mathfrak{B}})$, is the game lasting (at most) $\mu$ innings, where at the top of inning $i$, \emph{Spoiler} plays an element of $A \cup B$, and \emph{Duplicator} then responds with an element of the other structure, in such a way that the pairs of elements chosen so far constitutes a partial isomorphism between $\mathfrak{A}$ and $\mathfrak{B}$.  \emph{Duplicator} wins if she lasts $\mu$ innings; otherwise \emph{Spoiler} wins.  

By the Gale-Stewart Theorem, $\text{EF}_\omega(-,-)$ is always determined.  Morever, this is related to ``potential isomorphisms":  Player 2 has a winning strategy in $\text{EF}_{\omega}(\mathfrak{A},\mathfrak{B})$ if and only if $\mathfrak{A}$ is isomorphic to $\mathfrak{B}$ in some forcing extension (see Nadel-Stavi~\cite{MR462942}).

Determinacy of $\text{EF}_{\omega}(-,-)$ does not generalize to higher cardinals:  Mekler et al.~\cite{MR1191613} give a ZFC example of a non-determined $\text{EF}_{\omega_1}(-,-)$ game.  On the other hand, Mekler et al.~\cite{MR1191613} did prove that it is relatively consistent with ZFC plus large cardinals that $\text{EF}_{\omega_1}(G,F)$ is determined whenever $G$ is a group and $F$ is a free abelian group.  Our Theorem \ref{thm_Cox_MM_Det} can be viewed as a strengthening of their theorem, in light of the results below.


If $M$ and $N$ are $R$-modules, for the purposes of the game $\text{EF}_\mu(M,N)$ we will view them as structures in the $(|R|+\aleph_0)$-sized language that includes predicates describing the scalar multiplication by $R$ on the modules.  Then if $X \subset M$, $Y \subset N$, and $\sigma: X \to Y$ is a function, $\sigma$ is a partial isomorphism from $M \to N$ if and only if $\sigma$ lifts to an $R$-module isomorphism 
\[
\widehat{\sigma}: \langle X \rangle^M_R \to \langle Y \rangle^N_R.
\]

\begin{theorem}\label{thm_P1WSFG_implies_SplrWS}
Suppose $\mu$ is a regular uncountable cardinal, $R$ is a ring, $M$ is an $R$-module, and $\mathcal{C}$ is a class of $<\mu$-presented $R$-modules.  If Player 1 has a winning strategy in the game $\mathcal{G}^{\text{Filt}(\mathcal{C})}_{\mu}(M)$, then for every $\mathcal{C}$-filtered module $P$, \emph{Spoiler} has a winning strategy in $\text{EF}^R_{\mu}(M,P)$.
\end{theorem}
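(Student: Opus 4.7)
The plan is to have Spoiler transcribe a winning strategy $\tau$ of Player 1 in $\mathcal{G}^{\text{Filt}(\mathcal{C})}_\mu(M)$ into a winning EF strategy, using the $\mathcal{C}$-filtered structure of $P$ as a template for the ``virtual Player 2 responses'' on the $P$-side. First I would apply Lemma \ref{lem_HillClub} to $P$ to fix a closed unbounded $D_P \subseteq \wp_\mu(P)$ such that $\langle V \rangle^P$ and $\langle V' \rangle^P / \langle V \rangle^P$ are $\mathcal{C}$-filtered whenever $V \subseteq V'$ both lie in $D_P$.

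Next, since $\mu$ is regular uncountable and each stage will consume only $<\mu$ EF-rounds, I would partition the available $\mu$ EF-rounds into a cofinal sequence of ``milestones'' $\langle \alpha_i : i < \mu \rangle$ and maintain, by recursion on $i < \mu$, three interlocking objects at the end of milestone $i$: the partial isomorphism $\sigma_i$ produced by the EF play so far; an auxiliary legal partial run $(X_k, Z_k)_{k \le i}$ of the filtration game on $M$ in which Player 1 uses $\tau$; and a $\subseteq$-increasing chain $V_0 \subseteq V_1 \subseteq \cdots$ of members of $D_P$ so that $\sigma_i$ lifts to module isomorphisms $\langle Z_k \rangle^M \simeq \langle V_k \rangle^P$ for every $k \le i$. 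At milestone $i$, Spoiler reads off $X_i := \tau\big((X_k, Z_k)_{k<i}\big)$ from the strategy, enumerates the $<\mu$-many elements of $X_i$ as moves in $M$ (collecting Duplicator's $P$-responses into a set $Y_i^{\mathrm{raw}}$), uses cofinality of $D_P$ to fix some $V_i \in D_P$ containing $\big(\bigcup_{k<i} V_k\big) \cup Y_i^{\mathrm{raw}}$, and then enumerates $V_i \setminus \big(\bigcup_{k<i} V_k \cup Y_i^{\mathrm{raw}}\big)$ as moves in $P$. If Duplicator ever cannot legally respond, Spoiler already wins; otherwise set $Z_i := \sigma_i^{-1}(V_i)$, a $<\mu$-sized subset of $M$.

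For the verification I would argue that $(X_k, Z_k)_{k \le i}$ remains a legal filtration-game play: the partial isomorphism yields
\[
\langle Z_i \rangle \Big/ \big\langle \textstyle\bigcup_{k<i} Z_k \big\rangle \ \simeq \ \langle V_i \rangle \Big/ \big\langle \textstyle\bigcup_{k<i} V_k \big\rangle,
\]
whose right-hand side is $\mathcal{C}$-filtered by the defining property of $D_P$ (using that both $V_i$ and $\bigcup_{k<i} V_k$ lie in $D_P$, the latter by closure of $D_P$ at limits), and $X_i \subseteq Z_i$ by construction. Hence if Duplicator survived all $\mu$ EF-rounds, the resulting length-$\mu$ sequence $(X_k,Z_k)_{k<\mu}$ would be a $\tau$-run of the filtration game in which Player 2 wins, contradicting the choice of $\tau$.

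The main obstacle I anticipate is purely combinatorial bookkeeping: packing the $\mu$ milestones into $\mu$ EF-rounds, and ensuring that $\bigcup_{k<i} V_k$ still lies in $D_P$ at each limit $i$ so that the next choice of $V_i$ produces a $\mathcal{C}$-filtered quotient. Both are handled by regularity of $\mu$ (giving $|\bigcup_{k<i} V_k| < \mu$ for $i < \mu$) and by the closure of $D_P$ in $\wp_\mu(P)$, so no novel difficulty should arise beyond careful indexing.
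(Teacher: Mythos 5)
Your proposal is correct and follows essentially the same route as the paper's proof: both invoke Lemma \ref{lem_HillClub} to fix a club $D \subseteq \wp_\mu(P)$ of "good" generating sets, have Spoiler alternate between enumerating $\tau$'s challenges in $M$ and enumerating a suitable member of $D$ in $P$, and pull Duplicator's responses back through the accumulated partial isomorphism to produce a legal $\tau$-run of the filtration game that Player 2 would win if Duplicator survived $\mu$ rounds. The bookkeeping details (milestones, closure of $D$ at limits, the isomorphism of quotients) match the paper's argument point for point.
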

\begin{proof}
Let $\tau$ be a winning strategy for Player 1 in $\mathcal{G}^{\text{Filt}(\mathcal{C})}_{\mu}(M)$, and suppose $P$ is a $\mathcal{C}$-filtered module.  We use $\tau$ to define a winning strategy for \emph{Spoiler} in $\text{EF}_{\mu}(M,P)$.  This relies heavily on Lemma \ref{lem_HillClub}, which guarantees that there is a club $D \subseteq \wp_\mu(P)$ such that 
\[
\forall Z \subset Z' \text{ both in } D, \ \langle Z \rangle \text{ and }  \frac{\langle Z' \rangle}{\langle Z \rangle} \text{ are } \mathcal{C} \text{-filtered.}
\]

We now describe a strategy for \emph{Spoiler} in the game $\text{EF}_{\mu}(M,P)$, which we will denote by $\psi$.

\begin{itemize}
 \item Let $X_0:= \tau(\emptyset)$ be Player 1's opening move according to $\tau$ in the $\mathcal{C}$-filtration game on $M$.  \emph{Spoiler} begins the $\text{EF}_{\mu}(M,P)$ game by simply enumerating $X_0$ in his first $|X_0|$-many moves.  Let $P_0$ be the set of \emph{Duplicator}'s responses.  Fix a $Z_0 \in D$ such that $P_0 \subseteq Z_0$.  \emph{Spoiler} then uses his next $|Z_0|$-many moves to enumerate $Z_0$.  Assuming \emph{Duplicator} has survived so far (i.e., through these first $|X_0|+|Z_0|$ stages of the EF game), she has constructed a bijection
 \[
 \sigma_0: Y_0  \to  Z_0 
 \]
 where $Y_0$ is $<\mu$-sized subset of $M$ containing $X_0$, $\langle Z_0 \rangle^P_R$ is $\mathcal{C}$-filtered (because $Z_0 \in D$), and $\sigma_0$ lifts to an isomorphism 
 \[
 \widetilde{\sigma}_0: \langle Y_0 \rangle^M_R \to \langle Z_0 \rangle^P_R.
 \]
 \item Now go back to the Filtration Game, where it is the bottom of the 0-th inning, and make Player 2 play the set $Y_0$; this is a valid move because $\langle Y_0 \rangle$ is isomorphic to $\langle Z_0 \rangle$ and hence $\mathcal{C}$-filtered, and $X_0 \subset Y_0$.

 \item In general, suppose we are at a position $t$ in the game tree of $\text{EF}_{\omega_1}(M,P)$, and that it is \emph{Spoiler}'s turn to play.  
 
 
Suppose we have constructed:
 \begin{enumerate}
  \item\label{item_ZsCfiltered} a $\subseteq$-increasing (but not necessarily continuous) sequence 
  \[
  \langle Z_i \ : \ i < \alpha_t \rangle
  \]
   of elements of $D$, for some $\alpha_t < \mu$, such that for all $i < \alpha_t$, 
  \[
  \frac{\langle Z_i \rangle}{\bigcup_{k < i} \langle Z_k \rangle} \text{ is } \mathcal{C} \text{-filtered};
  \]

  \item a partial play
  \[
p(t)=  \big\langle (X_i,Y_i) \ : \  i < \alpha_t \big\rangle
  \]
of the game $\mathcal{G}^{\text{Filt}(\mathcal{C})}_{\mu}(M)$ according to $\tau$;

  \item a coherent sequence of bijections $\langle \sigma_i: Y_i \to Z_i \ : \ i < \alpha_t \rangle$ arising from the plays so far of the EF game, such that each $\sigma_i$ lifts to an isomorphism
  \[
  \widetilde{\sigma}_i: \langle Y_i \rangle^M \to \langle Z_i \rangle^P.
  \]
 \end{enumerate}
\textbf{Then} let $X_{\alpha_t}:=\tau(p(t))$.   In the next $|X_{\alpha_t}|$-many rounds of the EF game, \emph{Spoiler} enumerates $X_{\alpha_t}$.  Letting $P_{\alpha_t}$ be \emph{Duplicator}'s responses, let $Z_{\alpha_t}$ be an element of $D$ such that
\[
P_{\alpha_t} \cup \bigcup_{i < \alpha_t} Z_i \ \subseteq Z_{\alpha_t}.
\]
\noindent Observe that by closure of $D$, $\bigcup_{i < \alpha_t} Z_i \in D$, and hence
\[
\frac{\langle Z_{\alpha_t} \rangle }{\bigcup_{i < \alpha_t} \langle Z_i \rangle} \text{ is } \mathcal{C} \text{-filtered.}
\]
(So \eqref{item_ZsCfiltered} has been maintained).  \emph{Spoiler} then enumerates $Z_{\alpha_t}$ at his turn in the next $|Z_{\alpha_t}|$ rounds of the EF game.  Assuming \emph{Duplicator} has survived, the play of the EF game so far which is $|X_{\alpha_t}|+|Z_{\alpha_t}|$ (in ordinal arithmetic) many rounds beyond node $t$, yields some $<\mu$-sized $Y_{\alpha_t} \subseteq M$ containing $X_{\alpha_t}$, and a bijection
\[
\sigma_{\alpha_t}: Y_{\alpha_t} \to Z_{\alpha_t}
\] 
 extending $\sigma_i$ for all $i < \alpha_t$ that lifts to an isomorphism $\widehat{\sigma}_{\alpha_t}: \langle Y_{\alpha_t} \rangle^M \to \langle Z_{\alpha_t} \rangle^N$, and such that
\[
\frac{\langle Z_{\alpha_t} \rangle}{\bigcup_{k < \alpha_t} \langle Z_k \rangle} \text{ is } \mathcal{C} \text{-filtered.}
\]
Since $\widetilde{\sigma}_{\alpha_t}$ is an isomorphism extending the previous $\widetilde{\sigma}_i$'s, it follows that 
\[
\frac{\langle Y_{\alpha_t} \rangle }{\bigcup_{i < \alpha_t} \langle Y_i \rangle} \text{ is } \mathcal{C} \text{-filtered,}
\]
and hence that $Y_{\alpha_t}$ is an acceptable response to $p(t)^\frown X_{\alpha_t}$ by Player 2 in the Filtration game.

 \end{itemize}

We claim that $\psi$ is a winning strategy for \emph{Spoiler} in the EF game.  Suppose not; then by regularity of $\mu$ there exists a strictly increasing sequence
\[
\langle t_\xi \ : \ \xi < \mu \rangle
\]
of nodes in the EF game tree where \emph{Spoiler} has used the strategy $\psi$.  Then 
\[
\langle p(t_\xi) \ : \ \xi < \mu \rangle
\]
is a strictly increasing sequence of nodes in the Filtration Game where Player 1 has used the strategy $\tau$; this is a contradiction, because $\tau$ is a winning strategy for Player 1 in the Filtration Game.

%
%
%

\end{proof}

\begin{lemma}\label{lem_Equiv_Various_P2WS}
Let $M$ be an $R$-module.  The following are equivalent:
\begin{enumerate}
 \item\label{item_WS2_FreeFilt} Player 2 has a winning strategy in the Free Filtration Game of length $\omega_1$ on $M$ (i.e., the game $\mathcal{G}^{\text{Filt}(\mathcal{C})}_{\omega_1}(M)$ where $\mathcal{C} = \{ R \}$);
 \item\label{item_WS2_EF} \emph{Duplicator} has a winning strategy in $\text{EF}_{\omega_1}(M,F)$, where $F$ the free $R$-module on $\omega_1$ generators;
 \item\label{item_PotentFree} $M$ is $\sigma$-closed potentially free.
\end{enumerate}

\noindent If $R$ is a ring and ``projective = free" for $R$-modules in all $\sigma$-closed forcing extensions---e.g., if $R = \mathbb{Z}$---then these are also equivalent to:
\begin{enumerate}[resume]
 \item\label{item_WS2_ProjFilt} Player 2 has a winning strategy in the Projective Filtration Game of length $\omega_1$ on $M$;

 \item\label{item_PotentProj} $M$ is $\sigma$-closed potentially projective.
\end{enumerate}
\end{lemma}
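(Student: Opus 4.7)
The plan is to first establish the core equivalence (1) $\Leftrightarrow$ (2) $\Leftrightarrow$ (3) without invoking the supplementary hypothesis, and then to fold in (4) and (5) using it.

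For (1) $\Leftrightarrow$ (3), the point is that a module is $\{R\}$-filtered precisely when it is free: each successive factor is isomorphic to $R$, hence projective, so every step in the filtration splits, and the pieces amalgamate into a free basis. Applying Lemma \ref{lem_equiv_WS2Filter_Potent} with $\mu = \omega_1$ and $\mathcal{C} = \{R\}$ then directly identifies (1) with ``$M$ is $\sigma$-closed potentially $\{R\}$-filtered'', which is exactly (3).

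For (3) $\Leftrightarrow$ (2), I will run a Nadel-Stavi-style correspondence between winning strategies for \emph{Duplicator} in $\text{EF}_{\omega_1}$ and potential isomorphism, specialized to $\sigma$-closed forcing. In the direction (2) $\Rightarrow$ (3), I will let $\mathbb{P}_\sigma$ be the poset of countable-length partial plays of $\text{EF}_{\omega_1}(M,F)$ in which \emph{Duplicator} follows her winning strategy $\sigma$, ordered by end-extension; this poset is $\sigma$-closed because countable unions of countable-length such partial plays remain of the same form, and a standard density argument produces, in the generic extension, a play of full length $\omega_1$ whose induced pair-set is an isomorphism between $M$ and $F$, witnessing that $M$ becomes free there. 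In the direction (3) $\Rightarrow$ (2), given that $M$ is free in some $\sigma$-closed extension $V[G]$, I will (after a further $\sigma$-closed step arranging the rank to equal $\omega_1$) fix a name $\dot{h}$ for an isomorphism $M \cong F$ and let \emph{Duplicator} maintain a descending sequence of conditions, at each round extending to decide the value of $\dot{h}$ (or $\dot{h}^{-1}$) at \emph{Spoiler}'s chosen element, and at countable limit stages taking a lower bound by $\sigma$-closure.

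For (4) $\Leftrightarrow$ (5) I will apply Lemma \ref{lem_equiv_WS2Filter_Potent} again, now with $\mathcal{C}$ the class of countably presented projective modules, and then invoke Corollary \ref{cor_SameClasses} to re-express ``$\sigma$-closed potentially $\mathcal{C}$-filtered'' as ``$\sigma$-closed potentially projective''. The remaining equivalence (5) $\Leftrightarrow$ (3) is immediate from the supplementary hypothesis, under which the classes of $\sigma$-closed potentially projective and $\sigma$-closed potentially free modules coincide. I expect the main obstacle to be the (3) $\Rightarrow$ (2) direction, specifically arranging that potential freeness witnesses isomorphism with $F$ of rank \emph{exactly} $\omega_1$ rather than some other free rank; this will be handled by exploiting the length-at-most-$\omega_1$ filtration produced in the proof of Lemma \ref{lem_equiv_WS2Filter_Potent} together with a further $\sigma$-closed step to adjust the rank if necessary.
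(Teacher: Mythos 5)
Your proposal follows the paper's proof almost exactly: the paper gets \eqref{item_WS2_FreeFilt} $\Leftrightarrow$ \eqref{item_PotentFree} and \eqref{item_WS2_ProjFilt} $\Leftrightarrow$ \eqref{item_PotentProj} directly from Lemma \ref{lem_equiv_WS2Filter_Potent}, gets \eqref{item_PotentFree} $\Leftrightarrow$ \eqref{item_PotentProj} immediately from the supplementary hypothesis, and for \eqref{item_WS2_EF} $\Leftrightarrow$ \eqref{item_PotentFree} simply cites Nadel--Stavi as well known. Your only real addition is to write out that last equivalence via the standard correspondence (forcing with countable partial plays along \emph{Duplicator}'s strategy in one direction, a descending sequence of conditions deciding a name for the isomorphism in the other), which is indeed the intended argument.

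The one genuine gap is exactly at the step you flagged: ``a further $\sigma$-closed step to adjust the rank.'' A $\sigma$-closed collapse can bring an uncountable rank down to exactly $\omega_1$, but no forcing can raise a countable rank, and the issue is not merely technical: if $M$ is free of countable rank over $\mathbb{Z}$, then \eqref{item_WS2_FreeFilt} and \eqref{item_PotentFree} hold trivially, yet \emph{Spoiler} wins $\text{EF}_{\omega_1}(M,F)$ outright by enumerating all of $M$ in his first $\omega$ moves and then playing a basis element of $F$ outside the countably generated image, so \eqref{item_WS2_EF} fails. Thus the implication \eqref{item_PotentFree} $\Rightarrow$ \eqref{item_WS2_EF} requires $M$ to be uncountable (so that the rank in the extension is at least $\omega_1$ and the collapse trick applies); this hypothesis is left implicit in the paper's citation of Nadel--Stavi and is harmless in the intended applications, but your write-up should state it explicitly rather than asserting the rank can always be adjusted.
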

\begin{proof}
The equivalence of \eqref{item_WS2_FreeFilt} with \eqref{item_PotentFree} is just Lemma \ref{lem_equiv_WS2Filter_Potent}.  The equivalence of \eqref{item_WS2_EF} with \eqref{item_PotentFree} is well-known, see e.g.\ Nadel-Stavi~\cite{MR462942}.  If freeness is equivalent to projectivity for $R$-modules in all $\sigma$-closed forcing extensions, then the equivalence of \eqref{item_PotentFree} with \eqref{item_PotentProj} is trivial.  Parts \eqref{item_PotentProj} and \eqref{item_WS2_ProjFilt} are equivalent by Lemma \ref{lem_equiv_WS2Filter_Potent}.
\end{proof}

Theorem \ref{thm_P1WSFG_implies_SplrWS} and Lemma \ref{lem_Equiv_Various_P2WS} yield:
\begin{corollary}\label{cor_DetImpliesDet}
Let $G$ be a $\mathbb{Z}$-module; i.e., an abelian group.  Determinacy of the Free Filtration Game of length $\omega_1$ on $G$, or determinacy of the Projective Filtration Game of length $\omega_1$ on $G$, implies determinacy of $\text{EF}_{\omega_1}(G,F)$ for all free abelian groups $F$.
\end{corollary}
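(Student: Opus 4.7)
The plan is to combine Theorem \ref{thm_P1WSFG_implies_SplrWS} with Lemma \ref{lem_Equiv_Various_P2WS} via a case split on which player has a winning strategy in the Filtration Game on $G$; such a split is available by the determinacy hypothesis.

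Suppose first that Player 1 has a winning strategy in the Free Filtration Game of length $\omega_1$ on $G$ (the Projective Filtration case is analogous). I would observe that every free abelian group $F$ is trivially $\{\mathbb{Z}\}$-filtered via its standard basis, and equally trivially $\mathcal{C}_p$-filtered for $\mathcal{C}_p$ the class of countably-presented projective $\mathbb{Z}$-modules (since $\mathbb{Z} \in \mathcal{C}_p$). Theorem \ref{thm_P1WSFG_implies_SplrWS} then hands us a winning strategy for \emph{Spoiler} in $\text{EF}_{\omega_1}(G,F)$, giving determinacy.

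Suppose instead that Player 2 has a winning strategy. By Lemma \ref{lem_Equiv_Various_P2WS}, $G$ is $\sigma$-closed potentially free, and moreover \emph{Duplicator} has a winning strategy in $\text{EF}_{\omega_1}(G,F^*)$ where $F^* = \mathbb{Z}^{(\omega_1)}$. For an arbitrary free abelian group $F$, I would then do a short case analysis on cardinalities. If $|F| = |G| = \aleph_1$, then $F \cong F^*$ and the lemma applies directly. If $|G|$ is countable, then by Lemma \ref{lem_Proj_Abs} the $\sigma$-closed potential freeness of $G$ already implies that $G$ is free in $V$, and the EF game reduces to the classical countable setting (where Duplicator wins iff the ranks of $G$ and $F$ agree, and otherwise Spoiler exhibits a finite obstruction to any partial isomorphism). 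If $F$ and $G$ have incompatible sizes preventing any potential isomorphism (e.g., $|F| < |G|$ with $|G| \geq \aleph_1$, or $|F| \geq \aleph_1 > |G|$), \emph{Spoiler} has a trivial winning strategy by enumerating $\omega_1$ distinct elements on the larger side and forcing \emph{Duplicator} to violate injectivity. The Projective-game hypothesis is handled identically, using the equivalence of clauses \eqref{item_PotentFree} and \eqref{item_PotentProj} in Lemma \ref{lem_Equiv_Various_P2WS} (valid because free and projective coincide for $\mathbb{Z}$-modules in any $\sigma$-closed extension).

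The main obstacle is the Player 2 case, since Lemma \ref{lem_Equiv_Various_P2WS} as stated only directly supplies a winning strategy for \emph{Duplicator} against the specific $F^* = \mathbb{Z}^{(\omega_1)}$; extending this to arbitrary free abelian $F$ requires the brief cardinality/rank analysis above, but each subcase is elementary and the principal case ($|F| = |G| = \aleph_1$) follows from the lemma verbatim.
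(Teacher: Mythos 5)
Your overall architecture is the right one and matches the paper's intended derivation: split on which player wins the Filtration Game (available by hypothesis), use Theorem \ref{thm_P1WSFG_implies_SplrWS} in the Player 1 case (every free abelian group is $\{\mathbb{Z}\}$-filtered and $\mathcal{C}_p$-filtered, so \emph{Spoiler} wins), and use Lemma \ref{lem_Equiv_Various_P2WS} in the Player 2 case. You are also right to flag that the lemma only speaks about the particular free group $\mathbb{Z}^{(\omega_1)}$, so that arbitrary free $F$ needs an extra argument; the paper leaves this implicit.

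However, your repair of the Player 2 case contains a genuine error. You claim that when $|F|<|G|$ and $|G|\ge\aleph_1$, \emph{Spoiler} wins by enumerating $\omega_1$ distinct elements of $G$. Take $G=\mathbb{Z}^{(\omega_2)}$ and $F=\mathbb{Z}^{(\omega_1)}$: here $G$ is free, hence Player 2 wins the Free Filtration Game on $G$, and Lemma \ref{lem_Equiv_Various_P2WS} says outright that \emph{Duplicator} wins $\text{EF}_{\omega_1}(G,\mathbb{Z}^{(\omega_1)})$ --- contradicting your assignment of the win to \emph{Spoiler}. The cardinality mismatch $\aleph_1<\aleph_2$ does \emph{not} ``prevent any potential isomorphism,'' because $\sigma$-closed forcing can collapse $\aleph_2$ to $\aleph_1$; and the injectivity argument fails because at every round $\alpha<\omega_1$ only countably many elements have been played, which embed injectively into any uncountable $F$. (That argument is only sound when the smaller structure is countable, where a survivor of all $\omega_1$ rounds would inject $\omega_1$ into a countable set.) Your case list also omits configurations such as $|G|=\aleph_1<|F|$ and $|F|=|G|>\aleph_1$. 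The uniform fix for the Player 2 branch is: if $G$ is $\sigma$-closed potentially free and uncountable and $F$ is free and uncountable, pass to a $\sigma$-closed extension collapsing $\max(|F|,|G|)$ to $\aleph_1$; there both are free of rank $\aleph_1$, hence isomorphic, so by the Nadel--Stavi characterization \emph{Duplicator} wins $\text{EF}_{\omega_1}(G,F)$ regardless of the ground-model cardinalities. With that replacement (keeping your countable-versus-uncountable and countable-versus-countable subcases, which are essentially correct), the proof goes through.
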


\section{Determinacy of filtration games and other consequences of Martin's Maximum}\label{sec_MainThms}

The key consequence of Martin's Maximum that we will use is the following principle introduced by Fuchino-Usuba (but under a different name):
\begin{definition}\label{def_RP_internal}
$\text{RP}_{\text{internal}}$ asserts that for all uncountable $X$, all stationary $S \subseteq [X]^\omega$, all regular $\theta$ such that $X \in H_\theta$, and all first order structures $\mathfrak{A}$ in a countable language extending $(H_\theta,\in, X, S)$, there exists a $W$ such that:
\begin{enumerate}
 \item $|W|=\omega_1 \subset W$;
 \item $W \prec \mathfrak{A}$; and
 \item $S \cap W \cap [W \cap X]^\omega$ is stationary in $[W \cap X]^\omega$. (The fact that we require $S \cap W \cap [W \cap X]^\omega$ to be stationary, rather than just $S \cap [W \cap X]^\omega$, is why we call this kind of reflection ``internal").
\end{enumerate}
\end{definition}

The principle $\text{RP}_{\text{internal}}$ can also be characterized by a strong form of Chang's Conjecture (Fuchino-Usuba~\cite{FuchinoUsuba}).  It is also equivalent to a certain L\"owenheim-Skolem-Tarski property of Stationary Logic; roughly speaking, $\text{RP}_{\text{internal}}$ is equivalent to the downward reflection of statements of the form
\[
\text{stat} Z  \ \text{aa} U_1 \dots \text{aa} U_k \ \phi(Z,U_1,\dots,U_k) 
\]
to a substructure of size $<\aleph_2$; here \emph{aa} is the ``almost all" quantifier and \emph{stat} is the ``stationarily many" quantifier of Stationary Logic, introduced by Shelah~\cite{MR376334}.  We will not prove this equivalence, but point out that it is very similar to the proof of the main theorem of Cox~\cite{Cox_Pi11} (which was, in turn, modeled after Fuchino et al.~\cite{FuchinoEtAl_DRP_LST}).


\subsection{$\aleph_2$-compactness of potentially filtered modules}

Given a class $\Gamma$ of $R$-modules, and an $R$-module $M$, let
\[
\wp^\Gamma_{\omega_2}(M):=\{ W \in \wp_{\omega_2}(M) \ :  \  \langle W \rangle^M_R \in \Gamma  \}
\]
\noindent We will say that:
\begin{itemize}
 \item  \textbf{$\boldsymbol{\Gamma}$ is $\boldsymbol{\omega_2}$-club-compact} if for every $R$-module $M$: if $\wp^\Gamma_{\omega_2}(M)$ contains a closed unbounded subset of $\wp_{\omega_2}(M)$, then $M \in \Gamma$.

 \item \textbf{$\boldsymbol{\Gamma}$ is $\boldsymbol{\omega_2}$-compact} if for every module $M$:  if $\wp^\Gamma_{\omega_2}(M) = \wp_{\omega_2}(M)$, then $M \in \Gamma$.
\end{itemize}

\noindent Club compactness is formally stronger than compactness, but in many natural situations they are equivalent.\footnote{E.g., if $\Gamma$ is closed under submodules, then $\wp^\Gamma_{\omega_2}(M)$ contains a club in $\wp_{\omega_2}(M)$ if and only if $\wp^\Gamma_{\omega_2}(M) = \wp_{\omega_2}(M)$.}

The following is the key use of $\text{RP}_{\text{internal}}$:
\begin{theorem}\label{thm_aleph_2_compact}
Assume $\text{RP}_{\text{internal}}$.  Let $R$ be a ring of size at most $\aleph_1$ and $\mathcal{C}$ be a collection of countably-presented $R$-modules.  Then the class of $\sigma$-closed potentially $\mathcal{C}$-filtered modules is $\aleph_2$-club-compact. 
\end{theorem}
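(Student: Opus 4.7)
The plan is to argue the contrapositive: assume $M$ is \emph{not} $\sigma$-closed potentially $\mathcal{C}$-filtered and use $\text{RP}_{\text{internal}}$ to reflect this failure down to an $R$-submodule $N$ of $M$ of size $\aleph_1$, obtaining a contradiction with the club hypothesis. Let $\Gamma$ denote the class of $\sigma$-closed potentially $\mathcal{C}$-filtered modules, and let $C \subseteq \wp_{\omega_2}(M)$ be a club with $C \subseteq \wp^\Gamma_{\omega_2}(M)$. For each $Z \in \wp_{\omega_1}(M)$ set
\[
S^M_Z := \{ Z' \in \wp_{\omega_1}(M) : Z \subset Z' \text{ and } \langle Z' \rangle^M / \langle Z \rangle^M \text{ is } \mathcal{C}\text{-filtered via a countable filtration} \}.
\]
Because the intersection of a stationary set with a club is stationary, Corollary \ref{cor_StatLogicChar_SigmaPotCfilter}\eqref{item_aa_stat_NoSizeRest} is equivalent to requiring $S^M_Z$ to be stationary in $\wp_{\omega_1}(M)$ for a club of $Z$, so the assumption $M \notin \Gamma$ yields that
\[
T := \{ Z \in \wp_{\omega_1}(M) : S^M_Z \text{ is non-stationary in } \wp_{\omega_1}(M) \}
\]
is stationary in $\wp_{\omega_1}(M)$.

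Fix a regular $\theta$ with $M, R, \mathcal{C}, C, T \in H_\theta$ and apply $\text{RP}_{\text{internal}}$ to $T$ with a countable-language structure $\mathfrak{A}=(H_\theta,\in,M,R,\mathcal{C},C,T,g,\ldots)$ that names the above parameters together with a function $g : \omega_2 \times \omega_1 \to \omega_2$ whose section $g(\alpha,\cdot)$ is a surjection onto $\alpha$ for each $\alpha \in [\omega_1,\omega_2)$. Any $W \prec \mathfrak{A}$ with $\omega_1 \subseteq W$ is then ordinally closed below each $\alpha \in W \cap \omega_2$, so $W \cap \omega_2 \in \text{Ord}$. Hence $\text{RP}_{\text{internal}}$ produces $W$ with $|W|=\omega_1\subseteq W$, $W\cap \omega_2 \in \text{Ord}$, and $T \cap W \cap \wp_{\omega_1}(W \cap M)$ stationary in $\wp_{\omega_1}(W \cap M)$. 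Because $|R|\leq \aleph_1 \subseteq W$ and $R \in W$, we have $R \subseteq W$, so $N := W \cap M$ is an $R$-submodule of $M$ of cardinality at most $\aleph_1$. Lemma \ref{lem_CharClubElemSub} (applied to $C$) then gives $N \in C$, hence $N \in \Gamma$, and Corollary \ref{cor_StatLogicChar_SigmaPotCfilter} provides a club $D_N \subseteq \wp_{\omega_1}(N)$ such that $S^N_Z$ is stationary in $\wp_{\omega_1}(N)$ for every $Z \in D_N$.

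The contradiction now comes from any $Z \in T \cap W \cap \wp_{\omega_1}(N)$. By Lemma \ref{lem_Kueker}, the non-stationarity of $S^M_Z$ is witnessed by some $F:[M]^{<\omega}\to M$ with $C_{F,\omega_1,M} \cap S^M_Z = \emptyset$; since $Z, M, R, \mathcal{C} \in W$, elementarity lets us pick such an $F$ in $W$. Corollary \ref{cor_ClubRef} then ensures that $C_{F,\omega_1,M} \cap \wp_{\omega_1}(N)$ contains a club in $\wp_{\omega_1}(N)$, and because $\langle Y \rangle^N = \langle Y \rangle^M$ for every $Y \subseteq N$ we have the identity $S^N_Z = S^M_Z \cap \wp_{\omega_1}(N)$; thus this club is disjoint from $S^N_Z$, forcing $S^N_Z$ to be non-stationary in $\wp_{\omega_1}(N)$. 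Intersecting the stationary set $T \cap W \cap \wp_{\omega_1}(N)$ with the club $D_N$ produces some $Z$ where $S^N_Z$ is simultaneously stationary and non-stationary, the desired contradiction.

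The principal technical point is ensuring $W \cap \omega_2 \in \text{Ord}$ so that $N \in C$ via Lemma \ref{lem_CharClubElemSub}; this is the only step requiring the auxiliary function $g$ in $\mathfrak{A}$, and it is the bridge that allows the $\omega_2$-level club hypothesis to interact meaningfully with the $\omega_1$-level reflection provided by $\text{RP}_{\text{internal}}$. Once $N$ is secured, the remainder reduces to standard manipulations of clubs and stationary sets via Lemmas \ref{lem_Kueker}, \ref{lem_CharClubElemSub}, and Corollary \ref{cor_ClubRef}, combined with the elementary observation that $\mathcal{C}$-filteredness of quotients from subsets of $N$ does not depend on whether the ambient module is $N$ or $M$.
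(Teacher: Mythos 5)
Your proof is correct and follows essentially the same route as the paper: use Corollary \ref{cor_StatLogicChar_SigmaPotCfilter} to convert the failure of $\sigma$-closed potential $\mathcal{C}$-filteredness of $M$ into a stationary set of $Z$'s whose sets of good extensions are non-stationary, reflect that set internally via $\text{RP}_{\text{internal}}$, note that $N = W\cap M$ lands in the given club (hence lies in $\Gamma$), and derive a contradiction --- the only cosmetic difference being that the paper closes by building an explicit length-$\omega_1$ filtration of $W\cap M$ and invoking Lemma \ref{lem_ConcatenateFilt}, whereas you re-apply the stationary characterization to $N$ together with the identity $S^N_Z = S^M_Z\cap\wp_{\omega_1}(N)$ and Corollary \ref{cor_ClubRef}. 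Your explicit arrangement that $W\cap\omega_2$ be an ordinal (via the auxiliary function $g$), so that Lemma \ref{lem_CharClubElemSub} applies at $\mu=\omega_2$, addresses a technical point the paper leaves implicit, and is handled correctly.
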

\begin{proof}
Suppose $M$ is an $R$-module and club-many elements of $\wp_{\omega_2}(M)$ generate $\sigma$-closed potentially $\mathcal{C}$-filtered submodules of $M$; let $D$ denote this club.  Suppose toward a contradiction that $M$ is not $\sigma$-closed potentially $\mathcal{C}$-filtered.  Then by the equivalence of parts \eqref{item_M_SigmaPotCFiltered} and \eqref{item_aa_stat_NoSizeRest} of Corollary \ref{cor_StatLogicChar_SigmaPotCfilter}, the set
\begin{equation*}
\begin{split}
\Big\{ Z \in [M]^\omega \ : \ \text{For stationarily many } Z' \in [M]^\omega,  \ \langle Z' \rangle/ \langle Z \rangle \text{ is } \mathcal{C} \text{-filtered}   \Big\}
\end{split}
\end{equation*}
\noindent does \textbf{not} contain a club in $[M]^\omega$; let $S$ denote its complement.  Then $S$ is stationary, and for every $Z \in S$,
\[
C_Z:= \{ Z' \in [M]^\omega \ : \   Z' \supseteq Z \text{ and } \langle Z' \rangle/\langle Z \rangle \text{ is } \textbf{not } \mathcal{C} \text{-filtered} \} 
\]
contains a club in $[M]^\omega$.

Fix a large regular $\theta$.  By $\text{RP}_{\text{internal}}$ there exists a
\[
W \prec \Big( H_\theta,\in,D,S,\langle C_Z \ : \ Z \in S \rangle \Big)
\]  
such that $|W|=\omega_1 \subset W$  and
\begin{equation}\label{eq_S_cap_W_stat}
S \cap W \cap [W \cap M]^\omega \text{ is stationary in } [W \cap M]^\omega.
\end{equation}

Note that since $R \in W$ and $|R| \le \omega_1 \subset W$, $W \cap M$ is already closed under scalar multiplication from $R$, and hence $W \cap M = \langle W \cap M \rangle^M$; i.e.\ $W \cap M$ is an $R$-submodule of $M$.  Since $D$ contains a club in $\wp_{\omega_2}(M)$ and $D \in W$, then by Lemma \ref{lem_CharClubElemSub}, $W \cap M \in D$.  Hence,
\begin{equation}\label{eq_W_C_filtered}
W \cap M \text{ is } \sigma \text{-closed potentially } \mathcal{C} \text{-filtered.}
\end{equation}

\noindent Since $W \cap M$ is an $R$-module of size $\omega_1$, Corollary \ref{cor_Aleph_1_gen_CFilt} and \eqref{eq_W_C_filtered} ensure that there is a $\mathcal{C}$-filtration
\[
\Big\langle \langle Z_i \rangle \ : \ i < \omega_1 \Big\rangle
\]
of $W \cap M$, where each $Z_i$ is countable.  By Lemma \ref{lem_ConcatenateFilt}, it follows that 
\begin{equation}\label{eq_AllPairs}
\forall k < \omega_1   \ \forall \ell < \omega_1 \ \ k < \ell \ \implies  \langle Z_\ell \rangle / \langle Z_k \rangle \text{ is } \mathcal{C} \text{-filtered}. 
\end{equation}

Now $\{ Z_i \ : \ i < \omega_1 \}$ is closed unbounded in $[W \cap M]^\omega$, so by \eqref{eq_S_cap_W_stat} there is some $i < \omega_1$ such that $Z_i \in S \cap W$. It follows that $C_{Z_i}$ is an element of $W$, and hence, by Lemma \ref{cor_ClubRef}, that
\[
C_{Z_i} \cap [W \cap M]^\omega \text{ contains a club in } [W \cap M]^\omega.
\]
Then there is some $j > i$ such that $Z_j \in C_{Z_i}$.  Then by definition of $\mathcal{C}_{Z_i}$, $\langle Z_j \rangle/ \langle Z_i \rangle$ is not $\mathcal{C}$-filtered; this contradicts \eqref{eq_AllPairs}.

\end{proof}

Theorem \ref{thm_aleph_2_compact} and Corollary \ref{cor_SameClasses} yield:

\begin{corollary}
Assume $\text{RP}_{\text{internal}}$.  Then for every ring $R$ of size at most $\aleph_1$, the class of $\sigma$-closed potentially projective modules is $\aleph_2$-club-compact.
\end{corollary}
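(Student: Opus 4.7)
The plan is to reduce this corollary to the immediately preceding Theorem \ref{thm_aleph_2_compact} by an appropriate choice of the parameter $\mathcal{C}$, and then to translate between the two languages via Corollary \ref{cor_SameClasses}. Fix a ring $R$ of size at most $\aleph_1$, and let $\mathcal{C}_p$ denote the class of all countably presented projective $R$-modules. This is a collection of countably-presented $R$-modules, so the hypotheses of Theorem \ref{thm_aleph_2_compact} are met with $\mathcal{C}:=\mathcal{C}_p$.

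Suppose now that $M$ is an $R$-module such that the set
\[
\wp^{\Gamma}_{\omega_2}(M) = \{ W \in \wp_{\omega_2}(M) \ : \ \langle W \rangle^M_R \text{ is } \sigma\text{-closed potentially projective}  \}
\]
contains a closed unbounded subset of $\wp_{\omega_2}(M)$, where $\Gamma$ is the class of $\sigma$-closed potentially projective $R$-modules. By Corollary \ref{cor_SameClasses}, a module is $\sigma$-closed potentially projective iff it is $\sigma$-closed potentially $\mathcal{C}_p$-filtered, so the indicated club set is exactly $\wp^{\Gamma'}_{\omega_2}(M)$, where $\Gamma'$ is the class of $\sigma$-closed potentially $\mathcal{C}_p$-filtered modules. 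Theorem \ref{thm_aleph_2_compact} (applied with $\mathcal{C}=\mathcal{C}_p$) then yields that $M \in \Gamma'$, i.e., $M$ is $\sigma$-closed potentially $\mathcal{C}_p$-filtered. Invoking Corollary \ref{cor_SameClasses} once more, $M \in \Gamma$, as required.

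There is really no obstacle here: the content is entirely in Theorem \ref{thm_aleph_2_compact} (which does the heavy lifting using $\text{RP}_{\text{internal}}$, the Hill-style club-characterization from Corollary \ref{cor_StatLogicChar_SigmaPotCfilter}, and Corollary \ref{cor_Aleph_1_gen_CFilt}) and in Corollary \ref{cor_SameClasses} (which identifies the two classes via Kaplansky's theorem together with the absoluteness lemmas for projectivity and for countably presented modules across $\sigma$-closed forcing). The only thing one must check to apply Theorem \ref{thm_aleph_2_compact} is that $\mathcal{C}_p$ qualifies as a collection of countably-presented $R$-modules, which is immediate from its definition.
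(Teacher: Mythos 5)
Your proposal is correct and follows exactly the paper's own route: the paper derives this corollary by citing Theorem \ref{thm_aleph_2_compact} (applied to the class of countably presented projective modules) together with Corollary \ref{cor_SameClasses}, which is precisely what you do. No gaps.
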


\subsection{Proof of Theorem \ref{thm_Cox_MM_Det}}

Theorem \ref{thm_Cox_MM_Det} from the introduction follows from Theorem \ref{thm_aleph_2_compact} and the following theorem:

\begin{theorem}\label{thm_QH_compact_Det}
Assume $R$ is a ring and $\mathcal{C}$ is a quotient-hereditary collection of countably presented $R$-modules.  Suppose the class of $\sigma$-closed potentially $\mathcal{C}$-filtered modules is $\aleph_2$-club-compact.  Then for every $R$-module $M$, the game $\mathcal{G}^{\text{Filt}(\mathcal{C})}_{\omega_1}(M)$ is determined.
\end{theorem}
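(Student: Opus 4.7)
The plan is to prove the contrapositive: assume Player 2 has no winning strategy in $\mathcal{G}^{\text{Filt}(\mathcal{C})}_{\omega_1}(M)$ and produce one for Player 1. By Lemma \ref{lem_equiv_WS2Filter_Potent}, that assumption is equivalent to $M$ not being $\sigma$-closed potentially $\mathcal{C}$-filtered. Applying the $\aleph_2$-club-compactness hypothesis contrapositively, the set
\[
T := \{ W \in \wp_{\omega_2}(M) \ : \ \langle W \rangle^M_R \text{ is not } \sigma\text{-closed potentially } \mathcal{C}\text{-filtered} \}
\]
must be nonempty (in fact stationary) in $\wp_{\omega_2}(M)$. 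I would fix any $W \in T$ and set $N := \langle W \rangle^M_R$, so that $N$ is a submodule of $M$ generated by a set of size at most $\aleph_1$ that itself fails to be $\sigma$-closed potentially $\mathcal{C}$-filtered.

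The next step is to exhibit a tautological winning strategy for Player 1 in the smaller game $\mathcal{G}^{\text{Filt}(\mathcal{C})}_{\omega_1}(N)$: fix an enumeration $\{ n_\alpha \ : \ \alpha < \omega_1 \}$ of $W$ (padding with $0$ if necessary) and have Player 1 play $X_i := \{ n_i \}$ at round $i$, ignoring Player 2's moves. Suppose for contradiction Player 2 survives all $\omega_1$ rounds with legal responses $Z_i \subseteq N$. Since $n_i \in X_i \subseteq \langle Z_i \rangle$ for every $i$ while $Z_i \subseteq N$, we get $\bigcup_{i < \omega_1} \langle Z_i \rangle = N$; and by the game rules each quotient $\langle Z_i \rangle / \langle \bigcup_{k<i} Z_k \rangle$ is $\mathcal{C}$-filtered (by a countable-length filtration, since the quotient is countably generated). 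Concatenating these internal $\mathcal{C}$-filtrations across all $\omega_1$ rounds produces a ground-model $\mathcal{C}$-filtration of $N$, so $N$ would in particular be $\sigma$-closed potentially $\mathcal{C}$-filtered, contradicting $W \in T$. Hence Player 2 must fail at some countable round, and Player 1's enumeration strategy is genuinely winning on $N$.

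Finally, since $\mathcal{C}$ is quotient-hereditary, Lemma \ref{lem_UpClosed} tells us $\Gamma^{\text{Filt}(\mathcal{C})}_{\omega_1,\text{P1}}$ is upward closed under the submodule relation, so Player 1's winning strategy on the submodule $N$ promotes to a winning strategy on $M$, completing the argument. The only real content of this plan is the step from \textit{``$M$ is not $\sigma$-closed potentially $\mathcal{C}$-filtered''} to the existence of an $\aleph_1$-generated submodule $N$ witnessing the same failure --- which is exactly what $\aleph_2$-club-compactness gives us. Given such an $N$, Player 1's strategy is essentially free and quotient-heredity handles the transfer back to $M$, so I do not anticipate any serious obstacle beyond the routine bookkeeping involved in concatenating the internal filtrations.
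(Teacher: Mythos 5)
Your proposal is correct and follows essentially the same route as the paper's proof: reduce via Lemma \ref{lem_equiv_WS2Filter_Potent} to non-potential-filteredness, use $\aleph_2$-club-compactness to find an $\aleph_1$-generated submodule $\langle W \rangle$ witnessing the failure, observe that Player 1 wins on $\langle W \rangle$ by enumerating $W$ (since a full run would concatenate into a $\mathcal{C}$-filtration), and transfer the strategy up to $M$ via Lemma \ref{lem_UpClosed} and quotient-heredity. The only difference is that you spell out the enumeration strategy and the concatenation argument in more detail than the paper does.
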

\begin{proof}
Suppose Player 2 does not have a winning strategy in $\mathcal{G}^{\text{Filt}(\mathcal{C})}_{\omega_1}(M)$.  By Lemma \ref{lem_equiv_WS2Filter_Potent}, $M$ is not $\sigma$-closed potentially $\mathcal{C}$-filtered.  By the assumed $\aleph_2$-club compactness, there is a $W \in \wp_{\omega_2}(M)$ such that $\langle W \rangle^M_R$ is not $\sigma$-closed potentially $\mathcal{C}$-filtered.

Note that in the game $\mathcal{G}^{\text{Filt}(\mathcal{C})}_{\omega_1}(\langle W \rangle)$, Player 1 has an easy winning strategy:  he simply enumerates $W$.  Such a game cannot last $\omega_1$ rounds, because if it did, it would yield a $\mathcal{C}$-filtration of $\langle W \rangle$.  Then by Lemma \ref{lem_UpClosed} and the assumption that $\mathcal{C}$ is quotient-hereditary, Player 1 has a winning strategy in the game $\mathcal{G}^{\text{Filt}(\mathcal{C})}_{\omega_1}(M)$.

\end{proof}

\begin{corollary}
$\text{RP}_{\text{internal}}$ implies that for every hereditary ring $R$ of size at most $\aleph_1$, Projective Filtration games of length $\omega_1$ for $R$-modules are determined.

The same conclusion holds for Free Filtration Games of length $\omega_1$.
\end{corollary}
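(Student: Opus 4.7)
The plan is to assemble the corollary from three results already in place in the excerpt: the $\aleph_2$-club-compactness statement of Theorem \ref{thm_aleph_2_compact}, which is the only place $\text{RP}_{\text{internal}}$ enters; the abstract determinacy criterion of Theorem \ref{thm_QH_compact_Det}; and the quotient-hereditariness observation of Lemma \ref{lem_HeredRingQuotHered}. Since all the heavy lifting has already been done, the corollary amounts to bookkeeping, and I do not expect any step to present a real obstacle.

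Fix a hereditary ring $R$ with $|R|\le \aleph_1$ and, for the Projective Filtration Game, let $\mathcal{C}_p$ be the class of all countably-presented projective $R$-modules, which is a class of countably-presented $R$-modules. Theorem \ref{thm_aleph_2_compact} (invoking $\text{RP}_{\text{internal}}$ and using $|R|\le \aleph_1$) gives that the class of $\sigma$-closed potentially $\mathcal{C}_p$-filtered modules is $\aleph_2$-club-compact. Lemma \ref{lem_HeredRingQuotHered} says that, because $R$ is hereditary, the class $\mathcal{C}_p$ is quotient-hereditary. With both hypotheses of Theorem \ref{thm_QH_compact_Det} now verified for $\mathcal{C}_p$, that theorem applies directly and yields that the game $\mathcal{G}^{\text{Filt}(\mathcal{C}_p)}_{\omega_1}(M)$ is determined for every $R$-module $M$. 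By the convention of Remark \ref{rem_ProjFiltGame}, this is exactly the Projective Filtration Game of length $\omega_1$ on $M$, completing the first assertion.

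For the Free Filtration Game, the plan is to run the same three-step argument with $\mathcal{C}=\{R\}$ replacing $\mathcal{C}_p$: Theorem \ref{thm_aleph_2_compact} applies to $\{R\}$ since $R$ itself is countably presented, giving $\aleph_2$-club-compactness for $\sigma$-closed potentially $\{R\}$-filtered modules; the remaining step is to verify that $\{R\}$ is quotient-hereditary in the relevant setting (following the splitting argument in the proof of Lemma \ref{lem_HeredRingQuotHered}, using hereditariness of $R$); and then Theorem \ref{thm_QH_compact_Det} delivers determinacy of $\mathcal{G}^{\text{Filt}(\{R\})}_{\omega_1}(M)$ for every $R$-module $M$, which by Remark \ref{rem_ProjFiltGame} is the Free Filtration Game of length $\omega_1$.
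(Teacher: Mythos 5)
For the Projective Filtration Game your argument is exactly the paper's (implicit) proof: the corollary is stated there without proof, immediately after Theorem \ref{thm_QH_compact_Det}, and the intended chain is precisely Theorem \ref{thm_aleph_2_compact} (where $\text{RP}_{\text{internal}}$ and $|R|\le\aleph_1$ are used) plus Lemma \ref{lem_HeredRingQuotHered} plus Theorem \ref{thm_QH_compact_Det}. That part is correct and complete.

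For the Free Filtration Game, however, the step you defer --- ``verify that $\{R\}$ is quotient-hereditary \dots following the splitting argument in the proof of Lemma \ref{lem_HeredRingQuotHered}'' --- is a genuine gap, because that verification cannot succeed for an arbitrary hereditary ring. Being $\{R\}$-filtered is the same as being free, and the splitting argument in Lemma \ref{lem_HeredRingQuotHered} exhibits $U'$ and $U'\cap Z$ as direct sums of the ideals $J_\xi\subseteq R$; hereditariness makes these \emph{projective}, but nothing makes them isomorphic to $R$, so the argument only delivers a projective (not free) filtration. Concretely, if $R$ is a Dedekind domain with nontrivial class group (hereditary, countable examples exist), take $Z=0$, $Z'=R$, and $U'=I$ a non-principal ideal: then $Z$ and $Z'/Z$ are free but $U'/(U'\cap Z)\simeq I$ is not, so $\{R\}$ is not quotient-hereditary and Theorem \ref{thm_QH_compact_Det} does not apply with $\mathcal{C}=\{R\}$. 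To be fair, the paper itself supplies no verification here either (Lemma \ref{lem_HeredRingQuotHered} only treats the two projective classes), so you have faithfully reproduced its route, gap included. The free case does go through by your argument whenever countably generated projective $R$-modules are free --- in particular for $R=\mathbb{Z}$, which is the case needed elsewhere in the paper (cf.\ Lemma \ref{lem_Equiv_Various_P2WS}) --- but as stated for all hereditary rings of size at most $\aleph_1$ it requires either an extra hypothesis or a different argument.
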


\subsection{Proof of Theorem \ref{thm_Cox_ClosedDL}}

A partial order $(I,\le)$ is called $\boldsymbol{<\mu}$\textbf{-directed} (for a regular cardinal $\mu$) if every $<\mu$-sized subset of $I$ has an upper bound.  A direct system is $<\mu$-directed if its underlying index set is $<\mu$-directed.

\begin{theorem}\label{thm_CompactImpliesClosedDL}
Suppose $R$ is a ring of size at most $\aleph_1$ and $\Gamma$ is a collection of $R$-modules such that:
\begin{enumerate}
 \item $\Gamma$ is $\aleph_2$-club-compact;
 \item $\Gamma$ is downward closed under pure submodules.
\end{enumerate}
Then $\Gamma$ is closed under $<\aleph_2$-directed limits of $R$-module homomorphisms.
\end{theorem}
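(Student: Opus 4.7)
The strategy is to invoke $\aleph_2$-club-compactness of $\Gamma$ applied to the direct limit $M = \varinjlim_{i \in I} M_i$, with canonical maps $\iota_i : M_i \to M$; that is, I will show that the set of $W \in \wp_{\omega_2}(M)$ with $\langle W \rangle^M_R \in \Gamma$ contains a club. Fix a regular $\theta$ with $M$, $R$, $I$, and the full system in $H_\theta$, and consider $W = W_0 \cap M$ where $W_0 \prec (H_\theta, \in, M, R, I, (M_i)_{i \in I}, (f_{ij}))$ has $|W_0| = \aleph_1 \subseteq W_0$ and $W_0 \cap \omega_2 \in \mathrm{Ord}$; by a standard Kueker-style reflection argument, such $W$ contain a club in $\wp_{\omega_2}(M)$. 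Because $|R| \le \aleph_1 \subseteq W_0$, we have $R \subseteq W_0$, so $W$ is already an $R$-submodule of $M$; moreover the elementarity of $W_0$ in $H_\theta$ ensures that $W$ is a \emph{pure} submodule of $M$, since any finite $R$-linear system over $W$ solvable in $M$ is solvable in $W$ by reflection.

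Next I would push $W$ back into a single $M_{j^*}$ using $<\aleph_2$-directedness. Since $|W_0 \cap I| \le \aleph_1$ and $I$ is $<\aleph_2$-directed, choose $i^* \in I$ above every element of $W_0 \cap I$. For each $x \in W$, elementarity yields $i(x) \in W_0 \cap I$ and $y_x \in W_0 \cap M_{i(x)}$ with $\iota_{i(x)}(y_x) = x$. Let $N := \langle \{f_{i(x), i^*}(y_x) : x \in W\} \rangle^{M_{i^*}}_R$, so $|N| \le \aleph_1$ and $\iota_{i^*}(N) = W$. The kernel $K := N \cap \ker \iota_{i^*}$ equals the directed union $\bigcup_{j \ge i^*} N \cap \ker f_{i^*, j}$ and has size $\le \aleph_1$; picking an $\le \aleph_1$-sized generating set of $K$, each generator is killed by some $f_{i^*, j}$, and $<\aleph_2$-directedness produces a single $j^* \ge i^*$ with $K = N \cap \ker f_{i^*, j^*}$. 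Setting $N' := f_{i^*, j^*}(N) \subseteq M_{j^*}$, the map $\iota_{j^*} \restriction N' : N' \to W$ is an $R$-module isomorphism.

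The crux---and the step I expect to be the main obstacle---is showing that $N'$ is a pure submodule of $M_{j^*}$. Given a finite $R$-linear system $A\mathbf{x} = \mathbf{b}$ with $\mathbf{b} \in (N')^n$ and a solution $\mathbf{y} \in (M_{j^*})^n$, applying $\iota_{j^*}$ coordinatewise gives a solution of $A\mathbf{x} = \iota_{j^*}(\mathbf{b})$ in $M$ with right-hand side in $W^n$. Purity of $W$ in $M$ yields a solution $\mathbf{z} \in W^n$; lifting $\mathbf{z}$ via the inverse of $\iota_{j^*} \restriction N'$ produces $\mathbf{y}' \in (N')^n$ with $\iota_{j^*}(A\mathbf{y}') = A\mathbf{z} = \iota_{j^*}(\mathbf{b})$, and injectivity of $\iota_{j^*}$ on $N'$ forces $A\mathbf{y}' = \mathbf{b}$. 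Thus $N'$ is pure in $M_{j^*} \in \Gamma$, so $N' \in \Gamma$ by downward closure under pure submodules, and $W \cong N'$ puts $W$ in $\Gamma$. This verifies the club condition on $\wp_{\omega_2}(M)$, and $\aleph_2$-club-compactness of $\Gamma$ concludes $M \in \Gamma$.
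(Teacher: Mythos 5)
Your proof is correct, and its skeleton matches the paper's: reflect to an elementary $W_0 \prec H_\theta$ of size $\aleph_1$ with $\omega_1 \subseteq W_0$, use $<\aleph_2$-directedness to find an index above $W_0 \cap I$, realize $W_0 \cap M$ as a pure submodule of a single module in the system, and conclude via downward closure under pure submodules together with $\aleph_2$-club-compactness. The differences are in how the middle step is executed. The paper argues contrapositively (if $M_{\mathcal{D}} \notin \Gamma$, a stationary set of bad submodules reflects to a single bad $W \cap M_{\mathcal{D}}$) and then defines the embedding \emph{directly} into one module: $\rho : W \cap M_{\mathcal{D}} \to M_{i_W}$, $[i,x] \mapsto \pi_{i,i_W}(x)$. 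That map is automatically injective (two elements with equal images at stage $i_W$ lie in the same thread), and it is well defined because a witness $k$ to $[i,x]=[j,y]$ can be found inside $W$ by elementarity, hence below $i_W$. Consequently your second index $j^*$ and the kernel computation $K = N \cap \ker f_{i^*,j^*}$ are not needed in the paper's route --- though they are a perfectly valid substitute, trading elementarity for a second application of $<\aleph_2$-directedness. Your other deviation --- first proving that $W$ is pure in the direct limit by elementarity, then transporting purity across the isomorphism $N' \cong W$ --- is a clean reorganization of the paper's direct verification that $\rho$ is pure (which reflects each solvable linear system into $W$ by elementarity). Both arguments deliver the same conclusion; the paper's is shorter, while yours isolates purity of $W$ in $M_{\mathcal{D}}$ as a reusable intermediate fact.
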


\begin{proof}
Suppose $\mathcal{D}$ is a directed system of structures from $\Gamma$, indexed by the $< \aleph_2$-directed partial order $(I,\le)$.  For $i \le j$ in $I$ let $\pi_{i,j}: M_i \to M_j$ be the associated homomorphism.  Note that we do \emph{not} assume the $\pi_{i,j}$'s are injective.

Let $M_{\mathcal{D}}$ denote the direct limit of $\mathcal{D}$, and suppose toward a contradiction that $M_{\mathcal{D}} \notin \Gamma$.  By $\aleph_2$-club-compactness of $\Gamma$, there is a stationary $T \subset \wp_{\omega_2}(M_{\mathcal{D}})$ consisting of submodules of $M_{\mathcal{D}}$ that fail to be in $\Gamma$.  By Lemma \ref{lem_CharClubElemSub} there is a
\[
W \prec (H_\theta,\in,T,\mathcal{D})
\]
such that $|W|=\omega_1 \subset W$ and $W \cap M_{\mathcal{D}} \in T$; so
\begin{equation}\label{eq_W_Minfty_NotFilt}
W \cap M_{\mathcal{D}} \notin \Gamma.
\end{equation}
 
Since $|W \cap I| \le |W| = \aleph_1$, the $<\aleph_2$-directedness of $I$ ensures there is an $i_W \in I$ above all indices in $W \cap I$.  By assumption on $\mathcal{D}$, $M_{i_W} \in \Gamma$.  Since $\Gamma$ is downward closed under pure embeddings, to obtain a contradiction it will suffice to find a pure embedding from $W \cap M_{\mathcal{D}}$ into $M_{i_W}$.

For $i \in I$ and $x \in M_i$, $[i,x]_{\mathcal{D}}$ denotes the equivalence class of the thread $\{ \pi_{i,j}(x) \ : \ j \ge_I i \}$ in the direct limit $M_{\mathcal{D}}$.  Note that, by elementarity of $W$, every element of $W \cap M_{\mathcal{D}}$ is of the form $[i,x]_{\mathcal{D}}$ for some $i \in W \cap I$ and some $x \in W \cap M_i$.  Define
\[
\rho: W \cap M_{\mathcal{D}} \to M_{i_W}
\]
by 
\[
[i,x]_{\mathcal{D}} \mapsto  \pi_{i,i_W}(x),
\]
where $i$ is any element of $W \cap I$ and $x$ is any element of $W \cap M_i$; note that under those assumptions, $\pi_{i,i_W}$ exists (because $i_W$ is above every index in $W$).  The map $\rho$ is well-defined, because if $[i,x]=[j,y]$ where $i,j,x,y \in W$, then by elementarity of $W$ there is some $k \in W \cap I$ with $k \ge i,j$ such that $\pi_{i,k}(x) = \pi_{j,k}(y) =: z$.  Since $i_W \ge k$, it follows from commutativity of the system $\mathcal{D}$ that that $\pi_{i,i_W}(x) =  \pi_{k,i_W}(z)= \pi_{j,i_W}(y)$.  That $\rho$ is injective is trivial, based on the definition of the $\mathcal{D}$-equivalence relation.  

To see that $\rho$ is an $R$-module homomorphism, first observe that $|R| \le \omega_1 \subset W$ and $R \in W$, and it follows (by elementarity of $W$) that 
\begin{equation}\label{eq_R_subset_W}
R \subset W.
\end{equation}
Now suppose $r,s \in R$, $i,j \in W \cap I$, $x \in W \cap M_i$, and $y \in W \cap M_j$.  By \eqref{eq_R_subset_W}, both $r$ and $s$ are in $W$; since $x$, $y$, $i$, $j$, and $k$ are also in $W$, we have
\begin{equation}\label{eq_LC_in_W}
r\pi_{i,k}(x) + s\pi_{j,k}(y) \text{ is an element of } W \cap M_k.
\end{equation}
Then
\begin{equation*}
\begin{split}
\rho \Big( r[i,x] + s[j,y] \Big)= \rho  \Big(  [k, r \pi_{i,k}(x) + s \pi_{j,k}(y)] \Big) = \pi_{k,i_W} \Big(  r \pi_{i,k}(x) + s \pi_{j,k}(y) \Big) \\
= r \pi_{i,i_W}(x) + s\pi_{j,i_W}(y) = r \rho\big( [i,x] \big) + s \rho \big( [j,y] \big),
\end{split}
\end{equation*}
where the second equality is by \eqref{eq_LC_in_W} and the definition of $\rho$.

To see that $\rho$ is a pure embedding, suppose 
\[
Av = b
\]
where:
\begin{itemize}
 \item $A$ is a (finite) matrix with entries from $R$;
 \item $b = \Big( \pi_{i_1, i_W}(b_1), \dots, \pi_{i_r, i_W}(b_r) \Big)^t$ is a vector of elements of the range of $\rho$, where each $i_k$ and $b_k$ come from $W$.
 \item $v$ is a vector from $M_{i_W}$.
\end{itemize}
Then the index $i_W$ and the vector $v$ witness that
\begin{equation}\label{eq_WhatHthetaBelieves}
\begin{split}
(H_\theta,\in) \models & \ \exists i \ \ i \ge i_1, \dots, i_r \ \text{ and } \\
 & Ax = \Big( \pi_{i_1,i}(b_1) , \dots, \pi_{i_r, i}(b_r) \Big)^t \text{ is solvable in } M_i.
\end{split}
\end{equation}
Now \eqref{eq_R_subset_W} (and the fact that $A$ is a finite matrix) ensures that $A \in W$.  The other parameters from \eqref{eq_WhatHthetaBelieves} are in $W$ too, and hence the $i$ from \eqref{eq_WhatHthetaBelieves} can be taken to be in $W \cap I$.  Furthermore, the solution in $M_i$ can be taken to come from $W \cap M_i$, for the same reason.  Say $\Big( u_1, \dots, u_\ell \Big)$ is a vector in $W \cap M_i$ such that 
\[
A\Big(u_1, \dots, u_\ell \Big)^t = \Big( \pi_{i_1,i}(b_1) , \dots, \pi_{i_r, i}(b_r) \Big)^t.
\]
Then $\Big(  \pi_{i,i_W}(u_1), \dots, \pi_{i,i_W}(u_\ell) \Big)$ is a vector from the range of $\rho$, and
\[
A \Big( \pi_{i,i_W}(u_1), \dots, \pi_{i,i_W}(u_\ell) \Big)^t = \Big( \pi_{i_1,i_W}(b_1), \dots, \pi_{i_r,i_W}(b_r)  \Big)^t = b.
\]

\end{proof}

We can now finish the proof of Theorem \ref{thm_Cox_ClosedDL}.   Assume $\text{RP}_{\text{internal}}$, $R$ is a ring of size at most $\aleph_1$, and $\mathcal{C}$ is a quotient-hereditary collection of countably-presented $R$-modules.  Let $\Gamma$ denote the class of $\sigma$-closed potentially $\mathcal{C}$-filtered modules, which by Corollary \ref{lem_equiv_WS2Filter_Potent} is the same as the class of modules $M$ such that Player 2 has a winning strategy in the $\mathcal{C}$-filtration game of length $\omega_1$ on $M$.  By Theorem \ref{thm_Cox_MM_Det}, such games are determined; this determinacy, together with Lemma \ref{lem_UpClosed}, implies that $\Gamma$ is downward closed under submodules (so, in particular, under pure submodules).  And Theorem \ref{thm_aleph_2_compact} guarantees that $\Gamma$ is $\aleph_2$-club-compact.  So by Theorem \ref{thm_CompactImpliesClosedDL}, $\Gamma$ is closed under $<\aleph_2$-directed limits, which completes the proof of Theorem \ref{thm_Cox_ClosedDL}.

\begin{remark}
Although we will not need the following, we note that Theorem \ref{thm_CompactImpliesClosedDL} generalizes easily to other model-theoretic settings.  For example, suppose $\mathcal{L}$ is a first order language, $\Gamma$ a collection of $\mathcal{L}$-structures that is $\aleph_2$-club compact,  and there exists a $<\aleph_2$-sized collection $\Sigma$ of $\mathcal{L}$-formulas such that $\Gamma$ is downward closed under injective $\Sigma$-preserving maps.  Then whenever $\mathcal{D}$ is a $<\aleph_2$-directed system of $\Sigma$-preserving (not necessarily injective) maps between elements of $\Gamma$, the direct limit of $\mathcal{D}$ is in $\Gamma$.
\end{remark}

\begin{corollary}\label{cor_RP_HeredAllSigmaClosed}
Assume $\text{RP}_{\text{internal}}$ and that $R$ is a ring of size at most $\aleph_1$ that is hereditary in all $\sigma$-closed forcing extensions.  Then the class of $\sigma$-closed potentially projective $R$-modules is closed under $<\aleph_2$-directed limits.
\end{corollary}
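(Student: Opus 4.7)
The plan is to deduce the corollary by verifying the two hypotheses of Theorem \ref{thm_CompactImpliesClosedDL} for $\Gamma:=$ the class of $\sigma$-closed potentially projective $R$-modules, using the $\aleph_2$-club-compactness provided by $\text{RP}_{\text{internal}}$.

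First I would use Corollary \ref{cor_SameClasses} to rewrite $\Gamma$ as the class of $\sigma$-closed potentially $\mathcal{C}_p$-filtered modules, where $\mathcal{C}_p$ is the collection of all countably presented projective $R$-modules. Since $|R|\le \aleph_1$ and $\mathcal{C}_p$ consists of countably presented modules, Theorem \ref{thm_aleph_2_compact} applies and yields that $\Gamma$ is $\aleph_2$-club-compact. This dispenses with the first hypothesis of Theorem \ref{thm_CompactImpliesClosedDL}.

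Next I would establish the second hypothesis, namely that $\Gamma$ is downward closed under pure submodules, by showing the stronger fact that $\Gamma$ is downward closed under \emph{arbitrary} submodules. Suppose $M\in \Gamma$ and $N\subseteq M$ is a submodule in $V$. Pick a $\sigma$-closed forcing extension $W$ of $V$ in which $M$ is projective. Then $N\in V\subseteq W$ and $N$ remains a submodule of $M$ in $W$. By the hypothesis of the corollary, $R$ is hereditary in $W$, so every submodule of the projective $W$-module $M$ is itself projective in $W$; in particular $N$ is projective in $W$. Therefore $N$ is $\sigma$-closed potentially projective in $V$, i.e., $N\in\Gamma$.

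With both hypotheses verified, an application of Theorem \ref{thm_CompactImpliesClosedDL} immediately gives that $\Gamma$ is closed under $<\aleph_2$-directed limits of $R$-module homomorphisms, which is the desired conclusion. There is no real obstacle here: the heavy lifting (the reflection argument behind $\aleph_2$-compactness, and the elementary-substructure-plus-pure-embedding construction of the direct limit map) has already been carried out in Theorems \ref{thm_aleph_2_compact} and \ref{thm_CompactImpliesClosedDL}. The only point that requires a moment's care is the translation between ``potentially projective'' and ``potentially $\mathcal{C}_p$-filtered'' afforded by Corollary \ref{cor_SameClasses}, which is what makes Theorem \ref{thm_aleph_2_compact} applicable to $\Gamma$.
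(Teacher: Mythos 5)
Your proposal is correct and follows essentially the same route as the paper: the paper's proof likewise observes that hereditariness of $R$ in all $\sigma$-closed extensions makes the class closed under (pure) submodules, and then cites Corollary \ref{cor_SameClasses}, Theorem \ref{thm_aleph_2_compact}, and Theorem \ref{thm_CompactImpliesClosedDL}. Your write-up merely spells out the submodule-closure argument in more detail than the paper does.
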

\begin{proof}
If $R$ is hereditary in all $\sigma$-closed forcing extensions, then the class of $\sigma$-closed potentially projective modules is closed under submodules (in particular, under pure submodules).  The result then follows from Corollary \ref{cor_SameClasses}, Theorem \ref{thm_aleph_2_compact}, and Theorem \ref{thm_CompactImpliesClosedDL}.
\end{proof}

\begin{lemma}\label{lem_CtbleHered}
If $R$ is a countable, hereditary ring, then $R$ remains hereditary in all $\sigma$-closed forcing extensions.
\end{lemma}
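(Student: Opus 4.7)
The plan is to use the characterization that a ring $R$ is left hereditary iff every left ideal of $R$ is projective as a left $R$-module, and then exploit the two key absoluteness facts: (a) $\sigma$-closed forcing adds no new subsets of any countable ground-model set, and (b) projectivity of countably generated modules is absolute between transitive $\mathrm{ZFC}^-$ models (Lemma \ref{lem_Proj_Abs}).

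First, fix a $\sigma$-closed poset $\mathbb{P}$ and a $V$-generic $G$, yielding the extension $W := V[G]$. Working in $W$, I want to show that every left ideal of $R$ is projective. Let $I$ be a left ideal of $R$ in $W$. Since $R$ is countable in $V$ (and $\sigma$-closed forcing preserves $\aleph_1$, so $R$ is countable in $W$ too), $I$ is a subset of a countable set. A standard argument shows $\sigma$-closed forcing adds no new subsets of any countable set from the ground model: if $\dot{I}$ is a name for a subset of $R$, one can recursively build a descending sequence of conditions deciding $\check{r} \in \dot{I}$ for each $r \in R$ (enumerating $R$ as $\{r_n : n < \omega\}$), and $\sigma$-closure yields a lower bound that decides the whole membership function, which must then agree with any generic interpretation. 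Hence $I \in V$, and $I$ is a left ideal of $R$ in $V$.

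By the hypothesis that $R$ is hereditary in $V$, the ideal $I$ is projective in $V$. Since $I \subseteq R$ is countable (in $V$), it is in particular countably generated in $V$. Applying Lemma \ref{lem_Proj_Abs} to the transitive $\mathrm{ZFC}^-$ models $V \subset W$, projectivity of $I$ transfers upward: $I$ is projective in $W$. Thus every left ideal of $R$ in $W$ is projective in $W$, which means $R$ is left hereditary in $W$.

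I do not expect any real obstacle here — the argument is a straightforward combination of the two standard absoluteness facts — the only point worth being careful about is the direction of absoluteness in Lemma \ref{lem_Proj_Abs} and making sure the "ideals don't change" step genuinely uses $\sigma$-closure and countability of $R$ (rather than, say, countability of $I$ in $W$, which would already follow but is not needed). If one preferred the submodule-of-projective characterization of hereditariness in the extension, one would instead need to argue that every $W$-submodule of a $W$-projective $R$-module is projective in $W$; this reduces to the ideal version by Kaplansky's theorem and the fact that submodules of countably generated projectives are countably generated (hence their projectivity is absolute), but working directly with left ideals is cleaner.
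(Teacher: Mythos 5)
Your proof is correct and follows essentially the same route as the paper: reduce hereditariness to projectivity of ideals, use $\sigma$-closure and countability of $R$ to see that no new ideals appear, and then push projectivity upward. The only cosmetic difference is that you invoke Lemma \ref{lem_Proj_Abs} for the upward transfer, whereas the paper just notes that projectivity (witnessed by a dual basis or a splitting) is trivially upward absolute without any cardinality hypothesis.
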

\begin{proof}
Suppose $R$ is countable and hereditary, and let $V[G]$ be a $\sigma$-distributive forcing extension of $V$. By Chapter 2E of \cite{MR1653294}, a ring $R$ is hereditary if and only if all ideals in $R$ are projective (as $R$-modules); so it suffices to show that in $V[G]$, all ideals in $R$ are projective.  Suppose, in $V[G]$, that $I$ is an ideal of $R$.  Since $R$ was countable in $V$ and $V[G]$ added no new countable sets, $I$ was already an element of $V$.  Since $V \models$ ``$R$ is hereditary", then $I$ is projective in $V$, and this is easily upward absolute to $V[G]$. 
\end{proof}

\begin{corollary}
Assume $\text{RP}_{\text{internal}}$ and that $R$ is a \textbf{countable}, hereditary ring.  Then the class of $\sigma$-closed potentially projective $R$-modules is closed under $<\aleph_2$-directed limits.
\end{corollary}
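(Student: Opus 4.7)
The plan is to derive this corollary as an immediate consequence of the two preceding results, namely Corollary \ref{cor_RP_HeredAllSigmaClosed} and Lemma \ref{lem_CtbleHered}. In other words, I would view this final statement essentially as a transcription: the countability hypothesis is a convenient, checkable condition that upgrades the abstract hypothesis ``hereditary in all $\sigma$-closed forcing extensions'' appearing in Corollary \ref{cor_RP_HeredAllSigmaClosed}.

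First I would note that a countable ring $R$ trivially has size at most $\aleph_1$, so the cardinality hypothesis required by Corollary \ref{cor_RP_HeredAllSigmaClosed} is satisfied. Next I would invoke Lemma \ref{lem_CtbleHered}, which tells us that countable hereditary rings remain hereditary in every $\sigma$-closed forcing extension; this is the one nontrivial input, and it is already proved in the excerpt (the key point being that $\sigma$-closed forcing adds no new countable sets, so every ideal of $R$ in the extension was already an ideal in the ground model and hence already projective there, with projectivity being upward absolute). With these two observations in hand, $R$ satisfies exactly the hypotheses of Corollary \ref{cor_RP_HeredAllSigmaClosed}, and the conclusion--that the class of $\sigma$-closed potentially projective $R$-modules is closed under $<\aleph_2$-directed limits--follows immediately.

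I do not expect any obstacle here; the proof is a one-line composition of prior results. If anything, the only thing to be careful about is ensuring that the notion of ``$\sigma$-closed potentially projective'' used in Corollary \ref{cor_RP_HeredAllSigmaClosed} genuinely matches the notion in the statement, but this is automatic from the definitions in Section \ref{sec_Hill_and_related}. So the proof will simply read: since $|R| \leq \aleph_0 \leq \aleph_1$, and since Lemma \ref{lem_CtbleHered} guarantees that $R$ is hereditary in every $\sigma$-closed forcing extension, Corollary \ref{cor_RP_HeredAllSigmaClosed} applies and yields the desired closure property.
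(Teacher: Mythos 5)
Your proof is correct and matches the paper's own argument exactly: the paper likewise derives this corollary immediately from Lemma \ref{lem_CtbleHered} together with Corollary \ref{cor_RP_HeredAllSigmaClosed}. Nothing further is needed.
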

\begin{proof}
This follows immediately from Lemma \ref{lem_CtbleHered} and Corollary \ref{cor_RP_HeredAllSigmaClosed}.
\end{proof}

\subsection{Proof of Theorem \ref{thm_AnswerBaldwin}}\label{subsec_AnswerBaldwin}

%
%

Consider the statement:
\begin{quote}
$\Phi :\equiv$ ``The class of $\mathbb{Z}$-modules (i.e., abelian groups) $G$ that are \textbf{not} $\sigma$-closed potentially free, under the pure embeddability ordering, is an AEC."
\end{quote}

\noindent We will show that $\Phi$ is independent of ZFC.  One direction has basically already been done:
\begin{theorem}
$\text{RP}_{\text{internal}}$ implies that for all rings $R$ of size at most $\aleph_1$ and all quotient-hereditary classes $\mathcal{C}$ of countably presented $R$-modules, the class of $R$-modules that are \textbf{not} $\sigma$-closed potentially $\mathcal{C}$-filtered is (under the pure embeddability order) an AEC. Moreover, its L\"owenheim-Skolem number is at most $\aleph_1$.

In particular, $\Phi$ holds and the L\"owenheim-Skolem number is $\aleph_1$ (this is the special case where $R = \mathbb{Z}$ and $\mathcal{C} = \{ R \}$).  
\end{theorem}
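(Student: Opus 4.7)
The plan is to combine the determinacy of the $\mathcal{C}$-Filtration Game (already established in this paper under $\text{RP}_{\text{internal}}$) with Theorem \ref{thm_AlwaysAEC}, and then to invoke $\text{RP}_{\text{internal}}$ a second time to bring the L\"owenheim--Skolem number down to $\aleph_1$.

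First, Theorem \ref{thm_aleph_2_compact} gives $\aleph_2$-club-compactness of the class of $\sigma$-closed potentially $\mathcal{C}$-filtered modules, and Theorem \ref{thm_QH_compact_Det} then yields, using the quotient-hereditariness of $\mathcal{C}$, that $\mathcal{G}^{\text{Filt}(\mathcal{C})}_{\omega_1}(M)$ is determined for every $R$-module $M$. Combining this determinacy with Lemma \ref{lem_equiv_WS2Filter_Potent}, the class of modules that are \emph{not} $\sigma$-closed potentially $\mathcal{C}$-filtered coincides with $\Gamma^{\text{Filt}(\mathcal{C})}_{\omega_1,\text{P1}}$. By Theorem \ref{thm_AlwaysAEC}, the latter, ordered by $\prec_R$, is an AEC. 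This establishes the AEC part of the statement, together with a L\"owenheim--Skolem bound of $\max(|R|, 2^{\aleph_0})$ via Lemma \ref{lem_P1_LS_always}.

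The remaining work is to improve this bound down to $\aleph_1$, and here $\text{RP}_{\text{internal}}$ is used directly. Fix $M$ not $\sigma$-closed potentially $\mathcal{C}$-filtered, and fix $A \subseteq M$ with $|A| \le \aleph_1$. By the failure of clause (4) of Corollary \ref{cor_StatLogicChar_SigmaPotCfilter}, the set $S := \{Z \in [M]^\omega : S_Z \text{ is non-stationary}\}$ is stationary, and for each $Z \in S$ we may fix a club $C_Z \subseteq [M]^\omega$ consisting of supersets $Z'$ of $Z$ such that $\langle Z'\rangle/\langle Z\rangle$ is \emph{not} $\mathcal{C}$-filtered via a filtration of length $<\omega_1$ (this is exactly the setup used in the proof of Theorem \ref{thm_aleph_2_compact}). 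Apply $\text{RP}_{\text{internal}}$ to a structure coding $M$, $R$, $A$, $S$, and the assignment $Z \mapsto C_Z$ to obtain an elementary $W$ with $|W| = \aleph_1 \subseteq W$, $A \in W$, and $S \cap W \cap [W \cap M]^\omega$ stationary in $[W \cap M]^\omega$. Since $R \in W$ and $|R| \le \aleph_1 \subseteq W$, $W \cap M$ is an elementary $R$-submodule of $M$, hence a pure submodule, of size $\aleph_1$ containing $A$. For each $Z \in S \cap W$, Corollary \ref{cor_ClubRef} gives that $C_Z \cap [W \cap M]^\omega$ contains a club of $[W \cap M]^\omega$; since the quotient $\langle Z'\rangle/\langle Z\rangle$ is the same whether computed in $M$ or in $W \cap M$, the contrapositive of clause (4) of Corollary \ref{cor_StatLogicChar_SigmaPotCfilter} witnesses that $W \cap M$ is not $\sigma$-closed potentially $\mathcal{C}$-filtered, so $W \cap M$ lies in the class.

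The main obstacle lies in this last step. The generic L\"owenheim--Skolem bound from Lemma \ref{lem_P1_LS_always} uses an elementary submodel closed under countable sequences, which forces size at least $2^{\aleph_0}$; under the natural models of $\text{RP}_{\text{internal}}$ coming from Martin's Maximum this gives only $\aleph_2$. The improvement to $\aleph_1$ therefore cannot proceed by that route and must instead come from reflecting the stationary-set characterization of non-$\sigma$-closed-potential-$\mathcal{C}$-filteredness from Corollary \ref{cor_StatLogicChar_SigmaPotCfilter} down to a substructure of size $\aleph_1$---which is precisely what $\text{RP}_{\text{internal}}$ was designed to deliver.
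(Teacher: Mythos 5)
Your proposal is correct and follows essentially the same route as the paper: determinacy via Theorems \ref{thm_aleph_2_compact} and \ref{thm_QH_compact_Det}, identification of the class with $\Gamma^{\text{Filt}(\mathcal{C})}_{\omega_1,\text{P1}}$ via Lemma \ref{lem_equiv_WS2Filter_Potent}, the AEC structure from Theorem \ref{thm_AlwaysAEC}, and the L\"owenheim--Skolem bound of $\aleph_1$ from the $\aleph_2$-compactness argument. The only cosmetic difference is that where the paper simply cites the contrapositive of Theorem \ref{thm_aleph_2_compact} to reflect non-potential-filteredness to an $\aleph_1$-sized submodule containing a given $A$, you unfold that theorem's proof (the stationary $S$, the clubs $C_Z$, and Corollaries \ref{cor_ClubRef} and \ref{cor_StatLogicChar_SigmaPotCfilter}) with the parameter $A$ added to the structure---the same underlying argument.
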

\begin{proof}
By Lemma \ref{lem_equiv_WS2Filter_Potent}, the class in the statement of the theorem is the same as the class of modules for which Player 2 has no winning strategy in the $\mathcal{C}$-filtration game of length $\omega_1$.  By Theorems \ref{thm_aleph_2_compact} and  \ref{thm_QH_compact_Det}, this game is determined, and hence the class from the previous sentence is the same as the class of modules for which Player 1 has a winning strategy in that game.  And since $\mathcal{C}$ is quotient-hereditary, Theorem \ref{thm_AlwaysAEC} ensures this class, under the pure embeddability order, is always an AEC.  Ordinarily, $2^{\aleph_0}$ is the best-known upper bound for the L\"owenheim-Skolem number, but Theorem \ref{thm_aleph_2_compact} ensures that every non-$\sigma$-closed potentially $\mathcal{C}$-filtered module has another such submodule of size $\aleph_1$.  So the L\"owenheim-Skolem number of the AEC is $\aleph_1$.
\end{proof}

%
%
%
%
%
%
%
%

To show that $\neg \Phi$ is relatively consistent with ZFC, we will use:

\begin{theorem}[Mekler et al.~\cite{MR1191613}]\label{thm_Square}
Suppose $\kappa$ is an infinite cardinal and Jensen's $\square_\kappa$ principle holds.  Then there exists an abelian group $G$ of cardinality $\kappa^+$ that is $\kappa^+$-free (i.e.\ every $< \kappa^+$-sized subgroup of $G$ is free), but the game 
\[
\text{EF}_{\omega_1}\big( F_{\omega_1},G\big)
\]
is not determined, where $F_{\omega_1}$ denotes the free abelian group of size $\omega_1$.

\end{theorem}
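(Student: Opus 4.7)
The plan is to construct $G$ using $\square_\kappa$ in a Shelah-style almost-free, non-free abelian group construction, and then exploit the coherence of the $\square$-sequence to defeat both players' attempts at winning strategies.

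First, I would extract from $\square_\kappa$ a non-reflecting stationary set $S \subseteq E^{\kappa^+}_\omega := \{\alpha < \kappa^+ : \text{cf}(\alpha) = \omega\}$ together with a coherent system of $\omega$-ladders $\langle c^\alpha_n : n < \omega\rangle$ converging to each $\alpha \in S$. Using these, I would build $G$ on generators $\{x_\alpha : \alpha < \kappa^+\} \cup \{y^\alpha_n : \alpha \in S,\ n < \omega\}$ subject to divisibility relations of the form $p_n\, y^\alpha_{n+1} = y^\alpha_n - x_{c^\alpha_n}$, where $\langle p_n\rangle$ is an increasing sequence of primes. A standard filtration argument shows $G$ is $\kappa^+$-free (any $<\kappa^+$-sized subgroup sits inside some $G_\alpha$ with $\alpha \in \kappa^+ \setminus S$, and $G_\alpha$ is free because the non-reflection of $S$ means only boundedly many ``coherent witnesses'' interact with it), while the Pontryagin criterion together with the stationarity of $S$ rules out freeness of $G$ itself.

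Second, I would argue non-determinacy in two separate steps. For the \emph{Duplicator has no winning strategy} direction, assume toward a contradiction that Duplicator wins $\text{EF}_{\omega_1}(F_{\omega_1},G)$. By the equivalence between EF-winning and potential isomorphism (see Lemma \ref{lem_Equiv_Various_P2WS}), $G$ is $\sigma$-closed potentially free. Now $\sigma$-closed forcing preserves $\omega$-cofinality and preserves stationarity of subsets of $[\kappa^+]^\omega$ (Fact \ref{fact_SigmaClosedPreserveStat}), so the Pontryagin obstruction encoded by $S$ survives into any $\sigma$-closed extension; combined with the absoluteness of the defining relations of $G$, this contradicts the hypothesized freeness of $G$ in the extension.

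Third, for the \emph{Spoiler has no winning strategy} direction, I would fix any candidate strategy $\tau$ for Spoiler and diagonalize against it. Choose a sufficiently large regular $\theta$ with $G, F_{\omega_1}, \tau, \square\text{-sequence} \in H_\theta$ and use $\square_\kappa$-coherence together with an elementary-chain argument to produce $\alpha \in S$ such that some countable $N \prec H_\theta$ has $N \cap \kappa^+ \subseteq \alpha$, $\tau \in N$, and the $\omega$-ladder at $\alpha$ is compatible with $N$. At such $\alpha$ the subgroup $G_\alpha$ is free (of rank $\le |\alpha| \le \kappa$), so in an auxiliary $\sigma$-closed collapse one obtains an isomorphism $G_\alpha \cong F_{\omega_1}$; but since the single strategy $\tau$ only ever calls for $\omega_1$ moves inside $G_\alpha$, Duplicator can translate via this isomorphism to respond to $\tau$ for all $\omega_1$ rounds, contradicting the assumption that $\tau$ was a winning strategy. (The coherence of $\square_\kappa$ is precisely what lets us find such a $\tau$-closed $\alpha \in S$; without it Spoiler could ``see around'' any fixed $\alpha$.)

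The hard part is balancing the two halves of the non-determinacy argument on the same object: the non-freeness of $G$ must be rigid enough to survive arbitrary $\sigma$-closed forcing (ruling out Duplicator), yet the ``badness'' must be distributed coherently enough along $S$ that, for any one fixed strategy $\tau$ of Spoiler, Duplicator can locally imitate an isomorphism onto $F_{\omega_1}$. The $\square_\kappa$ sequence is what makes both demands simultaneously satisfiable.
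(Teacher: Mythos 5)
First, a point of order: the paper does not prove Theorem \ref{thm_Square} at all --- it is quoted from Mekler--Shelah--V\"a\"an\"anen with only a remark that their $\kappa=\omega_1$ argument generalizes --- so your sketch must be measured against the known argument rather than against anything in this text. Your architecture is the right one (a ladder-system group on a non-reflecting stationary $S\subseteq E^{\kappa^+}_\omega$ extracted from $\square_\kappa$; rule out Duplicator via the potential-isomorphism characterization of Lemma \ref{lem_Equiv_Various_P2WS}; rule out Spoiler by confining the play to a free subgroup closed under his strategy). But both halves of the non-determinacy argument have genuine gaps, and you have attached the coherence of the $\square$-sequence to the wrong half.

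For the Spoiler half, a \emph{countable} $N\prec H_\theta$ cannot work: the game lasts $\omega_1$ rounds, so after countably many innings the play escapes $N$. What is needed is $N$ of size $\aleph_1$ with $\omega_1\subseteq N$ and $\tau\in N$, built as a continuous, internally approachable chain of countable submodels so that positions of countable length (which are not elements of $N$ merely because their entries are) remain inside $N$ and $\tau$ can be applied to them there. Then $H=N\cap G$ is an $\aleph_1$-sized, hence free, subgroup, and Duplicator defeats $\tau$ \emph{in $V$} by fixing a basis of $H$ in advance and extending her partial isomorphism through countable basis-generated direct summands with free complements on both sides. Your ``auxiliary $\sigma$-closed collapse'' producing an isomorphism $G_\alpha\cong F_{\omega_1}$ is not usable: an isomorphism that exists only in a forcing extension supplies no moves for Duplicator in the actual game. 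Note that this half uses only $\kappa^+$-freeness, not coherence. For the Duplicator half, the sentence ``the Pontryagin obstruction encoded by $S$ survives into any $\sigma$-closed extension'' is precisely the nontrivial point, and it is here that coherence earns its keep. The witnessing $\sigma$-closed forcing will in general collapse $\kappa^+$ to $\omega_1$, after which $G$ must be re-filtered by \emph{countable} subgroups; the stationarity of $S$ as a set of ordinals in $\kappa^+$ says nothing directly about that new filtration. One must verify in $V$ that stationarily many $Z\in[G]^\omega$ admit countable $Z'\supseteq Z$ with $\langle Z'\rangle/\langle Z\rangle$ non-free, so that Fact \ref{fact_SigmaClosedPreserveStat} and Corollary \ref{cor_StatLogicChar_SigmaPotCfilter} can be applied; for the ladder group this requires that $Z$ contain a tail of the ladder at $\sup(Z\cap\kappa^+)$, and it is exactly the tree-likeness of the $\square_\kappa$-derived ladders that forces countable elementary submodels to capture the ladder at their supremum. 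With an arbitrary ladder system on a non-reflecting stationary set, $G$ is non-free in $V$ but may become free after the collapse, and Duplicator would then win.
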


\begin{remark}
The proof of Theorem \ref{thm_Square} from \cite{MR1191613} only dealt with the case $\kappa = \omega_1$, but the proof goes through for any $\kappa$.
\end{remark}

\begin{theorem}\label{thm_CON_NegDB2_Not_AEC}
Suppose that Jensen's $\square_\kappa$ principle holds for class-many $\kappa$ (failure of this requires consistency of large cardinals).  Then $\Phi$ fails.
\end{theorem}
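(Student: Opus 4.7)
The plan is to violate the L\"owenheim-Skolem axiom \ref{item_LS_axiom} of Definition \ref{def_AEC} for the class in question. Let $\mathcal{N}$ denote the class of abelian groups that are \emph{not} $\sigma$-closed potentially free. Assume toward contradiction that $(\mathcal{N}, \prec_{\mathbb{Z}})$ (with any of the three orderings in Theorem \ref{thm_AnswerBaldwin}) is an AEC, and let $\lambda$ be its L\"owenheim-Skolem number.

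Using the hypothesis that $\square_\kappa$ holds for class-many $\kappa$, fix a cardinal $\kappa > \lambda$ with $\square_\kappa$. Apply Theorem \ref{thm_Square} to obtain an abelian group $G$ of cardinality $\kappa^+$ such that $G$ is $\kappa^+$-free and $\text{EF}_{\omega_1}(F_{\omega_1}, G)$ is not determined; in particular, \emph{Duplicator} has no winning strategy in this EF game. Since $\text{EF}_{\omega_1}$ is symmetric in its two arguments, \emph{Duplicator} also lacks a winning strategy in $\text{EF}_{\omega_1}(G, F_{\omega_1})$. By the equivalence of parts \eqref{item_WS2_EF} and \eqref{item_PotentFree} of Lemma \ref{lem_Equiv_Various_P2WS}, $G$ fails to be $\sigma$-closed potentially free, so $G \in \mathcal{N}$.

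Now pick any singleton $A = \{g\} \subseteq G$. The L\"owenheim-Skolem axiom demands an $A' \in \mathcal{N}$ with $A \subseteq A' \prec_{\mathbb{Z}} G$ and $|A'| \le |A| + \lambda = \lambda$. But then $A'$ is a subgroup of $G$ of size at most $\lambda < \kappa^+$, and $\kappa^+$-freeness of $G$ forces $A'$ to be free. Any free abelian group is trivially $\sigma$-closed potentially free (no forcing required), so $A' \notin \mathcal{N}$, contradicting the hypothesis that $A' \in \mathcal{N}$. Thus no $\lambda$ can serve as a L\"owenheim-Skolem number, and $\mathcal{N}$ is not an AEC.

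The only non-routine ingredient is the symmetry of the EF game combined with Lemma \ref{lem_Equiv_Various_P2WS}; everything else is an immediate exploitation of the $\kappa^+$-freeness of $G$ against any candidate L\"owenheim-Skolem cardinal. There is no serious obstacle, since the class-many supply of $\kappa$ with $\square_\kappa$ lets us defeat an arbitrary $\lambda$, and the subgroups of free abelian groups are themselves free, which makes the ``$A'$ is small, hence free, hence outside $\mathcal{N}$'' step work uniformly for the subgroup, pure subgroup, or elementary subgroup orderings.
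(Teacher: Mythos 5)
Your proof is correct and follows essentially the same route as the paper: both invoke Theorem \ref{thm_Square} to produce, above any candidate L\"owenheim-Skolem cardinal $\lambda$, a $\kappa^+$-free group $G$ of size $\kappa^+$ whose EF game against $F_{\omega_1}$ is undetermined, conclude that $G$ is not $\sigma$-closed potentially free, and then use $\kappa^+$-freeness to show that no $\le\lambda$-sized subgroup can witness the L\"owenheim-Skolem axiom. The only (harmless) difference is that you pass from non-determinacy of the EF game directly to ``\emph{Duplicator} has no winning strategy'' and apply Lemma \ref{lem_Equiv_Various_P2WS}, whereas the paper routes through Corollary \ref{cor_DetImpliesDet} and non-determinacy of the Free Filtration Game; the two paths identify the same class of groups via Lemma \ref{lem_equiv_WS2Filter_Potent}.
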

\begin{proof}
For brevity, let $\Gamma_{\neg \text{P2}}$ be the class of abelian groups such that Player 2 has no winning strategy in the Free Filtration Game of length $\omega_1$ on $G$.  We show that the L\"owenheim-Skolem requirement \ref{item_LS_axiom} from Definition \ref{def_AEC} fails for this class (under the pure subgroup order).

Suppose toward a contradiction that $\Gamma_{\neg \text{P2}}$ did have an L\"owenheim-Skolem number, say, $\mu$.  By assumption, there is a $\kappa$ larger than $\mu$ such that Jensen's $\square_\kappa$ holds.  By Theorem \ref{thm_Square}, there is an almost free abelian group $G$ of cardinality $\kappa^+$ such that $\text{EF}_{\omega_1}\big( G, F_{\omega_1} \big)$ is not determined.  Then by Corollary \ref{cor_DetImpliesDet}, the Free Basis game of length $\omega_1$ on $G$ is not determined; so, in particular, Player 2 does not have a winning strategy in the Free Basis game of length $\omega_1$ on $G$.  Since $G$ is almost free and $|G|=\kappa^+$, however, every $\le \kappa$-sized subgroup of $G$ is free.  In particular, every $\mu$-sized subgroup of $G$ is free, and hence (trivially) $\sigma$-closed potentially free.  In other words, $G \in \Gamma_{\neg \text{P2}}$ but no $\mu$-sized subgroup of $G$ is in $\Gamma_{\neg \text{P2}}$.  This contradicts our assumption that $\mu$ is the L\"owenheim-Skolem number for the class $\Gamma$.
\end{proof}

\section{Concluding remarks}\label{sec_Conclusion}

The \textbf{Weak Reflection Principle (WRP)} is defined just like $\text{RP}_{\text{internal}}$ (Definition \ref{def_RP_internal}), \emph{except} that one only requires $S \cap [W \cap X]^\omega$ to be stationary, rather than requiring $S \boldsymbol{\cap W} \cap [W \cap X]^\omega$ to be stationary.  Clearly $\text{RP}_{\text{internal}}$ implies WRP, but it is not known if the converse holds.  Note that the ``internality" part of $\text{RP}_{\text{internal}}$ was used at crucial point near the end of the proof of Theorem \ref{thm_aleph_2_compact}:  we used that $Z_i \in W$ to ensure that $C_{Z_i} \in W$, which was needed to conclude that $C_{Z_i} \cap [W \cap M]^\omega$ contained a club.
Furthermore, $\text{RP}_{\text{internal}}$ can be shown to be equivalent to a kind of downward L\"owenheim-Skolem-Tarki property for certain formulas in stationary logic, and the internality is used at crucial places in the equivalence (via an argument similar to the  main theorem of Cox~\cite{Cox_Pi11}).

These observations raise, yet again, an open problem that has appeared in similar forms in other places (e.g.\ Beaudoin~\cite{MR877870}, Krueger~\cite{MR2674000}, and Cox~\cite{Cox_RP_IS}).  The principles $\text{RP}_{\text{IS}}$ and $\text{RP}_{\text{IU}}$ are further variants whose definition we will not provide (see Krueger~\cite{MR2674000}); the principle $\text{RP}_{\text{IU}}$ is equivalent to \textbf{Fleissner's Axiom R}, by Fuchino-Usuba~\cite{FuchinoUsuba}.  The following implications are known:
\begin{equation}\label{eq_ChainRPs}
\text{RP}_{\text{internal}} \ \implies \ \text{RP}_{\text{IS}} \ \implies \ \text{RP}_{\text{IU}} \ \implies \text{WRP}.
\end{equation}

\begin{question}
Which pairs of principles, if any, from the chain \eqref{eq_ChainRPs} are equivalent?
\end{question}
\noindent It is currently not known if \emph{any} pair from \eqref{eq_ChainRPs} is equivalent.  An obviously related question is:
\begin{question}
Do any of the consequences of $\text{RP}_{\text{internal}}$ from this paper follow from WRP?
\end{question}

\begin{bibdiv}
\begin{biblist}
\bibselect{../../../MasterBibliography/Bibliography}
\end{biblist}
\end{bibdiv}

\end{document}